\numberwithin{equation}{section}
\renewcommand*\env@matrix[1][c]{\hskip -\arraycolsep
  \let\@ifnextchar\new@ifnextchar
  \array{*\c@MaxMatrixCols #1}}
\title[ASD Connections on $\CP^2_q$: Instantons]{\vspace*{-1cm}Anti-selfdual Connections on the\\[10pt] Quantum Projective Plane: Instantons \\[20pt]}
\date{May 2013; v2 August 2014}
\dedicatory{Dedicated to the memory of Tetsuya Masuda}
\author[F.~D'Andrea]{Francesco D'Andrea} 
\address[F.~D'Andrea]{Dipartimento di Matematica e Applicazioni, Universit\`a di Napoli Federico II, Piazzale Tecchio 80, I-80125 Napoli, Italy, and INFN, Sezione di Napoli, Napoli, Italy}
\email{francesco.dandrea@unina.it}
\author[G.~Landi]{Giovanni Landi \\[20pt]}
\address[G.~Landi]{
Matematica, Universit\`{a} di Trieste, Via A.~Valerio~12/1, I-34127 Trieste, Italy, and INFN, Sezione di Trieste, Trieste, Italy}
\email{landi@units.it}
\keywords{Noncommutative geometry, quantum projective plane, instantons}
\subjclass[2010]{Primary: 58B34; Secondary: 20G42, 53C07}
\thanks{\rule{0pt}{10pt}\textit{Acknowledgments.}
Both authors were partially supported by the Italian Project ``Prin 2010-11 -- Operator Algebras, Noncommutative Geometry 
and Applications''. FD was partially supported by UniNA and Compagnia di San Paolo under the grant ``STAR Program 2013''.}
\newtheorem{prop}{Proposition}[section]
\newtheorem{lemma}[prop]{Lemma}
\newtheorem{cor}[prop]{Corollary}
\newtheorem{thm}[prop]{Theorem}
\theoremstyle{definition}
\newtheorem{rem}[prop]{Remark}
\newcommand{\A}{\mathcal{A}}
\newcommand{\E}{\mathcal{E}}
\newcommand{\F}{F}
\newcommand{\U}{\mathcal{U}}
\newcommand{\HH}{\mathcal{H}}
\DeclareMathOperator{\SU}{SU}
\newcommand{\Z}{\mathbb{Z}}
\newcommand{\R}{\mathbb{R}}
\newcommand{\C}{\mathbb{C}}
\newcommand{\CP}{\mathbb{C}\mathrm{P}}
\newcommand{\Aq}{\mathcal{A}(\mathbb{C}\mathrm{P}^2_q)}
\newcommand{\Oq}{\mathcal{A}(\mathrm{SU}_q(3))}
\newcommand{\Sq}{\mathcal{A}(\mathrm{S}^5_q)}
\newcommand{\Uq}{\mathcal{U}_q(\mathfrak{su}(3))}
\newcommand{\Kq}{\mathcal{U}_q(\mathfrak{u}(2))}
\newcommand{\id}{\textup{id}}
\newcommand{\az}{\triangleright}
\newcommand{\za}{\triangleleft}
\newcommand{\mL}[1]{\mathcal{L}_{#1}}
\newcommand{\maa}[1]{\bigg(\!\begin{array}{cc}#1\end{array}\!\bigg)}
\newcommand{\tr}{\mathrm{Tr}}
\newcommand{\wprod}{\wedge_q\mkern-1mu}
\newcommand{\de}{\partial}
\newcommand{\deb}{\bar{\partial}}
\newcommand{\dd}{\mathrm{d}}
\newcommand{\ii}{\mathrm{i}}
\newcommand{\Hs}{\star_H\hspace{0.5pt}}
\newcommand{\vol}{\tau}
\newcommand{\rPN}{r_N}
\newcommand{\kahler}{\omega}
\newcommand{\qkahler}{\omega_q}
\begin{document}

\begin{abstract}
We study one-instantons over $\overline{\CP}{}^2_q$, that is anti-selfdual
connections with instanton number $1$ on the quantum projective plane $\CP^2_q$ with 
orientation which is reversed with respect to the usual one.
The orientation is fixed by a suitable choice of a basis element  
for the rank $1$ free bimodule of top forms.
The noncommutative family of solutions is foliated, each non-singular
leaf being isomorphic to $\overline{\CP}{}^2_q$ itself.
\end{abstract}

\maketitle

\vspace{-5mm}

{
\renewcommand{\contentsname}{{\normalsize Contents}}
\small
\baselineskip=10pt
\tableofcontents
}

\vspace*{-1cm}


\pagebreak

\section{Introduction}\label{sec:1}
For Yang-Mills theory, two most used manifolds, the four-dimensional sphere and the complex projective plane, 
show a marked different behaviour. This is because unlike the four-sphere, the projective plane has no orientation-reversing isometry, 
and, for this theory, one has two distinct oriented manifolds, the plane with standard orientation --- denoted $\CP^2$ --- and the same manifold with reversed orientation --- denoted $\overline{\CP}{}^2$. 
A clear manifestation of the consequences 
is the fact that the moduli space of $\SU(2)$ one-instantons, that is to say anti-selfdual (ASD) connections 
on a $\SU(2)$-vector bundle $E$ over 
$\CP^2$ with second Chern number $c_2(E) =1$, is empty, while this is not the case for $\overline{\CP}{}^2$. Indeed for the latter oriented manifold the moduli space \cite{Buc86,Don84} (cf.~also \cite{Gro90} and \cite{Hab92}) has very interesting geometrical structures: it turns out to be a open cone over $\overline{\CP}{}^2$ (of course anti-selfdual connections on $\overline{\CP}{}^2$ are the same as selfdual connections on $\CP^2$). 

The work started in \cite{DL09b} was devoted to monopole connections on the quantum projective plane $\CP^2_q$, that is ASD connection on line bundles 
over $\CP^2_q$. We continue here with one-instantons, meaning with this ASD connections on a rank $2$ 
complex vector bundle with instanton number $1$.  
The base space of the bundle is taken to be $\overline{\CP}{}^2_q$, that is the quantum projective plane $\CP^2_q$ with orientation which is reversed with respect to the usual one. The orientation is fixed by a suitable choice of a volume form.   
The noncommutative space of ASD solutions is foliated, each non-singular leaf being isomorphic to $\overline{\CP}{}^2_q$  itself. 

As for the classical case, the module $\Omega^{2}$ of two-forms on the quantum projective plane can be decomposed as a 
direct sum of four submodules, 
$$
\Omega^{2}=\Omega^{2,0}\oplus\Omega^{1,1}_v\oplus\Omega^{1,1}_s \oplus\Omega^{0,2} \:, 
$$
where $\Omega^{1,1}_s$ is a rank $1$ free bimodule with basis the $q$-analogue of the K{\"a}hler form.
Classically, on $\CP^2$ with standard orientation a two-form is ASD if and only if
it belongs to $\Omega^{1,1}_v$ (sections of a rank $3$ vector bundle on $\CP^2$). 
On the space $\overline{\CP}{}^2$,
obtained from $\CP^2$ by reversing the orientation, a two-form is ASD if and only if
its component in $\Omega^{1,1}_v$ is zero, that is its $(1,1)$ component
is proportional to the K{\"a}hler form (see e.g.~\cite{DK90}). When $q\neq 1$,  
something similar happens.
Using the canonical Hermitian structure on the modules of forms, we introduce a Hodge star operator which depends on the choice of a basis for the rank $1$ free bimodule of top forms $\Omega^{2,2}$; the basis element, that we interpret as ``volume form'', is unique modulo a rescaling of $\lambda\in\R\smallsetminus\{0\}$, and
the condition that $\Hs^2=(-1)^k\id$ on $k$-forms determines the volume form up to a sign $\lambda=\pm 1$. The sign, that we interpret as a choice of orientation, determines whether an ASD two-form is in $\Omega^{1,1}_v$ or 
$\Omega^{2,0}\oplus\Omega^{1,1}_s\oplus\Omega^{0,2}$. With our conventions, the choice $\lambda=1$ gives the standard orientation, or $\CP^2_q$, while $\lambda=-1$ is for the reverse orientation, or $\overline{\CP}{}^2_q$. As it happens classically, 
an ASD connection (on a finitely projective module --- a bundle) over $\overline{\CP}{}^2_q$ is one whose curvature has $(1,1)$ component proportional to the K{\"a}hler form.

The one-instanton connections 
on $\overline{\CP}{}^2_q$ that we present here form a noncommutative family 
that is the analogue of an open cone over
the quantum projective plane itself.
For each value of a real parameter $t\in [0,1)$, the coordinate on a fixed generator of the cone ($t=0$ yielding
the vertex), one has an ASD connection $\smash[t]{\widetilde{\nabla}_t}\rule{0pt}{11pt}$ with connection one-form:
$$
\omega_t=\begin{pmatrix}
\;A^*-A & \Phi\; \\[2pt]
\;-\,\Phi^* & B-B^*
\end{pmatrix}
$$
where $A,B\in\Omega^{0,1}$ and $\Phi$ is an $\Omega^1$-valued endomorphism of the bundle,  
given explicitly by 
\begin{align*}
A^* &=q\sqrt{1-t^2 \;p_{11} }\;\de \frac{1}{\sqrt{1-t^2 \;p_{11} }} \;, \qquad
B =q^{-1}\sqrt{1-t^2q^4 \;p_{11} }\; \deb \frac{1}{\sqrt{1-t^2q^4 \;p_{11} }}
\;, \\
\Phi &= t\,
\frac{1}{\sqrt{1-t^2 \;p_{11} }} \, \left(\sum\nolimits_j\bigl\{z_j(\de p_{j2})z_3-qz_j(\de p_{j3})z_2\bigr\} \right) 
\,\frac{1}{\sqrt{1-t^2q^4 \;p_{11} }}
\;.  
\end{align*}
Here the $\{p_{jk}\}$'s are the generators of the algebra $\Aq$, the $\{z_{j}\}$'s generate the algebra $\Sq$ of a covering quantum sphere and $\de$, $\deb$ are a holomorphic and corresponding antiholomorphic exterior derivatives over $\CP^2_q$.
For each $t$ a family of connections is obtained from $\smash[t]{\widetilde{\nabla}_t}$ by using the coaction of the symmetry quantum group $\SU_q(3)$.
For each fixed $t>0$, the family is parametrized by the quantum projective plane itself, or rather its algebra $\Aq$.
When $t=0$ the connection `reduces' to the direct sum 
$$
\widetilde{\nabla}_0:=\nabla_1\oplus\nabla_{-1}
$$
of the monopole and antimonopole connections of \cite{DL09b} on the direct sum of line bundles $L_1\oplus L_{-1}$ 
(in analogy with the case $q=1$, the bundle $L_{-1}$ is the ``module of sections" of the tautological line bundle, and $L_1$ is its dual). Since the connection $\widetilde{\nabla}_0$ is $\SU_q(3)$-coinvariant, for $t=0$ we have a single connection 
(the vertex of the cone). For $t\neq 0$ the connection $\widetilde{\nabla}_t$, which is still defined on the bundle $L_1\oplus L_{-1}$, 
is irreducible, meaning that there is no non-trivial submodule of $L_1\oplus L_{-1}$ that is preserved by the connection.

The reason to call ``one-instanton'' the ASD connection $\widetilde{\nabla}_0$, as well as the more general ones,  is due to the fact that the bundle $L_1\oplus L_{-1}$ on which they are all defined, has the correct ``topological numbers", i.e.~rank $2$, charge $0$, and instanton number $1$, respectively.

\medskip

\noindent{\bf Notations.}\\
Throughout this paper, by a $*$-algebra we always mean a unital associative involutive complex algebra, whose representations will be implicitly assumed to be unital $*$-representations and the representation symbols will be omitted.
The real deformation parameter will be taken to be $0<q<1$. We denote by
$$
[z]_q:=\frac{q^z-q^{-z}}{q-q^{-1}}
$$
the $q$-analogue of a number $z\in\C$, we define recursively the $q$-factorial by $[0]!:=1$ and
$[n]_q!:=[n]_q[n-1]_q!$ for $n\geq 1$, and finally the $q$-trinomial coefficient by:
$$
[j,k,l]_q!=q^{-(jk+kl+lj)}\frac{[j+k+l]_q!}{[j]_q![k]_q![l]_q!} \;.
$$
We use Sweedler notation for the coproduct, $\Delta(a) = a_{(1)}\otimes a_{(2)}$ with a sumation understood,
and write the opposite coproduct as $\Delta^{\mathrm{cop}}(a) = a_{(2)}\otimes a_{(1)}$.


\section{The base space and the bundles}\label{sec:2}
In this section, we recall the definion and some of the properties of
the quantum complex projective plane $\CP^2_q$ along the lines of the papers \cite{DDL08b,DL09b,DL08}, using in particular
the notations of \cite{DL09b}.  This is defined as a $q$-deformation of the complex projective plane $\CP^2$
seen as the real manifold $\SU(3)/\mathrm{U}(2)$. We start then from deformations of Lie groups and Lie algebras.


\subsection{The quantum group $\SU_q(3)$ and its homogeneous spaces}
Let $\Uq$ be the compact real form of the Hopf algebra denoted $\breve{U}_q(\mathfrak{sl}(3))$ in Sec.~6.1.2 of~\cite{KS97}.
As a $*$-algebra it is generated by elements $\{K_i,K_i^{-1},E_i,F_i\}_{i=1,2}$,
with $*$-structure $K_i=K_i^*$ and $F_i=E_i^*$, $i=1,2$, and relations
\begin{gather*}
[K_i,K_j]=0\;,\qquad
[E_i,F_i]=\frac{K_i^2-K_i^{-2}}{q-q^{-1}}\;,  \qquad [E_i,F_j]=0 \quad\mathrm{if}\;i\neq j \;, \\
K_iE_iK_i^{-1}=qE_i\; \qquad K_iE_jK_i^{-1}=q^{-1/2}E_j \quad\mathrm{if}\;i\neq j \;, \\
[E_i,[E_j,E_i]_q]_q=0 \;.
\end{gather*}
Here the symbol $[a,b]_q$ denotes the $q$-commutator of operators $a,b$, i.e. $[a,b]_q:=ab-q^{-1}ba$.
It becomes a Hopf $*$-algebra with the following coproduct, counit and antipode:
\begin{gather*}
\Delta(K_i)=K_i\otimes K_i\;,\quad
\Delta(E_i)=E_i\otimes K_i+K_i^{-1}\otimes E_i\;, \quad 
\Delta(F_i)=F_i\otimes K_i+K_i^{-1}\otimes F_i\;, \\
\epsilon(K_i)=1\;,\qquad
\epsilon(E_i)=\epsilon(F_i)=0\;, \\
S(K_i)=K_i^{-1}\;,\qquad
S(E_i)=-qE_i\;,\qquad
S(F_i)=-q^{-1}F_i\;,
\end{gather*}
for $i=1,2$.
For obvious reasons we denote by $\U_q(\mathfrak{su}(2))$ the Hopf $*$-subalgebra
of $\Uq$ generated by the elements $\{K_1,K_1^{-1},E_1,F_1\}$, while 
$\Kq$ denotes the Hopf $*$-subalgebra generated by $\U_q(\mathfrak{su}(2))$ together with $K_1K_2^2$
and $(K_1K_2^2)^{-1}$.

\smallskip

On
the collection $\Uq'$ of linear maps $\Uq\to\C$  one defines operations dual to those of $\Uq$
as follows. For $f,g:\Uq\to\C$, the product is  
$$
(f\cdot g)(x):= (f \otimes g)(\Delta x) = f(x_{(1)})g(x_{(2)}) \;,
$$
for all $x\in\Uq$, and $\Delta x =  x_{(1)} \otimes x_{(2)}$ in Sweedler notation. 
The unit is the map $1(x):=\epsilon(x)$. The coproduct, counit, antipode and
$*$-involution are given by
\begin{align*}
\Delta(f)(x,y) &:=f(xy) \;,& \hspace{-15mm}
\epsilon(f)&:=f(1) \;,\\
S(f)(x) &:=f(S(x)) \;,& \hspace{-15mm}
f^*(x)&:=\overline{f(S(x)^*)} \;,
\end{align*}
for all $x,y\in\Uq$, and with $\bar{c}$ the complex conjugate of $c\in\C$.
As usual, they satisfy all the axioms of a Hopf $*$-algebra, except that $\mathrm{Im}(\Delta)\subset\mathrm{Hom}_{\C}(\Uq\otimes\Uq,\C)$
is bigger than $\Uq'\otimes\Uq'$.

A proper Hopf $*$-algebra $\Oq\subset\Uq'$ is the one generated by the matrix elements $u^i_j$ and $(u^i_j)^*$, 
with $i,j=1,2,3$, of the fundamental representations of $\Uq$.
As an abstract Hopf $*$-algebra, it is defined by the commutation relations (cf.~\cite{KS97}, Sec.~9.4):
\begin{align*}
u^i_ku^j_k &=qu^j_ku^i_k \;,&
u^k_iu^k_j &=qu^k_ju^k_i \;,&&
\forall\;i<j\;, \\
[u^i_l,u^j_k]&=0 \;,&
[u^i_k,u^j_l]&=(q-q^{-1})u^i_lu^j_k \;,&&
\forall\;i<j,\;k<l\;,
\end{align*}
and by a cubic relation:
$$
\sum\nolimits_{\pi\in S_3}(-q)^{l(\pi)}u^1_{\pi(1)}u^2_{\pi(2)}u^3_{\pi(3)}=1 \;,
$$
where the sum is over all permutations $\pi$ of the three elements
$\{1,2,3\}$ and $l(\pi)$ is the number of inversions in $\pi$.
The $*$-structure is given by
\begin{equation}\label{eq:star}
(u^i_j)^*=(-q)^{j-i}(u^{k_1}_{l_1}u^{k_2}_{l_2}-qu^{k_1}_{l_2}u^{k_2}_{l_1}) \;,
\end{equation}
with $\{k_1,k_2\}=\{1,2,3\}\smallsetminus\{i\}$ and
$\{l_1,l_2\}=\{1,2,3\}\smallsetminus\{j\}$, as ordered sets.
As expected for a corepresentation, coproduct, counit and antipode are of `matrix' type:
$$
\Delta(u^i_j)=\sum\nolimits_ku^i_k\otimes u^k_j\;,\qquad
\epsilon(u^i_j)=\delta^i_j\;,\qquad
S(u^i_j)=(u^j_i)^*\;.
$$

The algebra $\Oq$ is a bimodule $*$-algebra for the left and right canonical
actions of $\Uq$, denoted $\az$ and $\za$ respectively and given by:
$$
(x\az f)(y):=f(yx) \qquad\mathrm{and}\qquad (f\za x)(y):=f(xy) \;,
$$
for all $f\in\Oq$ and all $x,y\in\Uq$. Explicitly, on generators:
\begin{align*}
K_i\az u^j_k &=q^{\frac{1}{2}(\delta_{i+1,k}-\delta_{i,k})}u^j_k \;,&
E_i\az u^j_k &=\delta_{i,k} u^j_{i+1}\;, &
F_i\az u^j_k &=\delta_{i+1,k} u^j_i\;, \\
u^j_k\za K_i &=q^{\frac{1}{2}(\delta_{i+1,j}-\delta_{i,j})}u^j_k \;,&
u^j_k\za E_i &=\delta_{i+1,j} u^i_k \;, &
u^j_k\za F_i &=\delta_{i,j} u^{i+1}_k \;.
\end{align*}

\smallskip

The algebras of ``functions" on the quantum sphere $\mathrm{S}^5_q$
and on the quantum projective plane $\CP^2_q$ are defined, respectively,
as the fixed point subalgebras of $\Oq$ for the right canonical action of $\U_q(\mathfrak{su}(2))$
and $\Kq$,
$$
\Sq:=\Oq^{\U_q(\mathfrak{su}(2))} \;,\qquad
\Aq:=\Oq^{\Kq} \;,
$$
and are left $\Uq$-module $*$-algebras for the restrictions of the left canonical
action. 

Generators of $\Sq$ are the elements $z_i:=u_i^3$ and generators of
$\Aq$ are the elements $p_{ij}:=z_i^*z_j$. The former algebra is generated,
as an abstract $*$-algebra by elements $\{z_i,z_i^*\}_{i=1,2,3}$ with relations~\cite{VS91}:
\begin{gather}
z_iz_j=qz_jz_i\quad\forall\;i<j \;,\qquad\quad
z_i^*z_j=qz_jz_i^*\quad\forall\;i\neq j \;, \notag\\
\rule{0pt}{16pt}
[z_1^*,z_1]=0 \;,\qquad
[z_2^*,z_2]=(1-q^2)z_1z_1^* \;,\qquad
[z_3^*,z_3]=(1-q^2)(z_1z_1^*+z_2z_2^*) \;, \notag\\
\rule{0pt}{16pt}
z_1z_1^*+z_2z_2^*+z_3z_3^*=1 \;.\label{eq:sphere}
\end{gather}
The elements $p_{ij}$ generating the latter algebra $\Aq$ can be arranged
as matrix entries in a projection $p$ that we name the ``defining'' projection.
It obeys \cite[pg.~848]{DL09b}
$$
q^4p_{11}+q^2p_{22}+p_{33}=1 \;,
$$
a $q$-trace condition to be used later on.
For $q=1$, we get a commutative algebra
generated by the matrix entries of a size $3$ and rank $1$ complex projection;
the underlying space is diffeomorphic (as a real manifold) to the projective plane
$\CP^2$ upon identifying each line through the origin in $\C^3$ with the range of a projection.
The generators of $\Sq$ play the role of ``homogeneous coordinates" of $\CP^2_q$
in this non-commutative setting.

For future use, we record the `orthogonality' relations
for rows and columns of $(u^i_j)$.
\begin{lemma}\label{lemma:ortho}
For all $a,b=1,2,3$ we have:
\begin{align*}
\sum\nolimits_iu^a_i(u^b_i)^* &=\delta_{a,b} \;, &
\sum\nolimits_iq^{2(a-i)}(u^a_i)^*u^b_i &=\delta_{a,b} \;,\\
\sum\nolimits_iq^{2(i-b)}u^i_a(u^i_b)^* &=\delta_{a,b} \;, &
\sum\nolimits_i(u^i_a)^*u^i_b &=\delta_{a,b} \;.
\end{align*}
\end{lemma}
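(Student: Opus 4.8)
The plan is to derive all four identities from the Hopf $*$-algebra structure of $\Oq$ together with the two formulas already recorded above, $\Delta(u^i_j)=\sum_k u^i_k\otimes u^k_j$ and $S(u^i_j)=(u^j_i)^*$. Two of them come straight from the antipode axioms applied to the generator $u^a_b$: from $m\circ(\id\otimes S)\circ\Delta=\eta\circ\epsilon$ one gets $\sum_i u^a_i\,S(u^i_b)=\epsilon(u^a_b)\,1=\delta_{a,b}\,1$, and $S(u^i_b)=(u^b_i)^*$ turns this into the top-left identity $\sum_i u^a_i(u^b_i)^*=\delta_{a,b}$; dually, $m\circ(S\otimes\id)\circ\Delta=\eta\circ\epsilon$ gives $\sum_i S(u^a_i)\,u^i_b=\delta_{a,b}\,1$, that is the bottom-right identity $\sum_i(u^i_a)^*u^i_b=\delta_{a,b}$. (Here one uses only the existence of the antipode, equivalently the cubic relation.)

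For the two remaining, weighted, identities the plan is to apply the anti-automorphism $S^{-1}$ to the two just obtained; this requires $S^{-1}$ explicitly on the generators, which is the only genuinely new ingredient:
\[
S^2(u^i_j)=q^{2(i-j)}u^i_j\,,\qquad\text{equivalently}\qquad S^{-1}(u^i_j)=q^{2(j-i)}(u^j_i)^*\,.
\]
I would prove this by pairing $\Oq\subset\Uq'$ with $\Uq$: since $S(f)(x)=f(S(x))$ one has $S^2(f)(x)=f(S^2(x))$, and on $\Uq$ the square of the antipode is the inner automorphism by the group-like element $K_1^4K_2^4$ — readily verified on the generators $K_i,E_i,F_i$ using the relations of $\Uq$ — which acts on the fundamental corepresentation diagonally with entries $q^{-2},1,q^2$; conjugating the matrix $(u^i_j)$ by this element gives precisely $S^2(u^i_j)=q^{2(i-j)}u^i_j$. (Equivalently the formula can be extracted from the explicit description \eqref{eq:star} of the $*$-structure, or quoted from the structure theory of $\mathrm{SU}_q(N)$, cf.~\cite{KS97}.)

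Finally, using that $S^{-1}$ is an algebra anti-homomorphism and the Hopf $*$-algebra identity $S^{-1}(c^*)=(S(c))^*$ — so that $S^{-1}\bigl((u^i_j)^*\bigr)=u^j_i$ — I would apply $S^{-1}$ to the top-left identity, getting $\sum_i u^i_b\,q^{2(i-a)}(u^i_a)^*=\delta_{a,b}$, which after relabelling $a\leftrightarrow b$ is the bottom-left identity $\sum_i q^{2(i-b)}u^i_a(u^i_b)^*=\delta_{a,b}$; and apply $S^{-1}$ to the bottom-right identity, getting $\sum_i q^{2(b-i)}(u^b_i)^*u^a_i=\delta_{a,b}$, which after relabelling is the top-right identity $\sum_i q^{2(a-i)}(u^a_i)^*u^b_i=\delta_{a,b}$. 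The only real obstacle is the conventions-dependent bookkeeping that pins down the $S^2$ formula with the right exponent; once that is fixed, each of the four identities is a one-line manipulation.
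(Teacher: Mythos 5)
Your proposal is correct, but it takes a genuinely different route from the paper's. The paper disposes of the lemma in one line by quoting the quantum Laplace-expansion (cofactor) identities of Prop.~9.2.8 in \cite{KS97} and rewriting the cofactors via $(u^j_k)^*=(-q)^{k-j}A^j_k$ together with $\mathcal{D}_q=1$; you instead stay entirely inside the Hopf $*$-algebra: the two unweighted identities are the antipode axioms $\sum_i u^a_i\,S(u^i_b)=\delta_{a,b}\,1=\sum_i S(u^a_i)\,u^i_b$ read through $S(u^i_j)=(u^j_i)^*$, and the two weighted ones follow by applying the anti-automorphism $S^{-1}$, whose only nontrivial input is $S^2(u^i_j)=q^{2(i-j)}u^i_j$. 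That computation --- the one genuinely delicate point, as you say --- is correct in the paper's conventions: besides your pairing argument (with $S^2=\mathrm{Ad}\bigl((K_1K_2)^4\bigr)$, i.e.\ $S^2(x)=gxg^{-1}$, acting as $\mathrm{diag}(q^{-2},1,q^2)$ in the fundamental representation; do state the side of the conjugation, since the opposite choice flips the exponent), it can be checked without any pairing directly from \eqref{eq:star}, e.g.\ $S^2(u^1_2)=S\bigl((u^2_1)^*\bigr)=q^{-2}u^1_2$, and the resulting sign is the one forced by the lemma itself: the case $a=b=3$ of the second identity is precisely the $q$-trace relation $q^4p_{11}+q^2p_{22}+p_{33}=1$, and the case $a=b=1$ of the third is the sphere relation transported by Remark~\ref{rem:otherS} (be aware that a display later in the paper, in the coaction section, writes $S^{-1}(u^i_j)$ with the opposite exponent; your version is the one consistent with the lemma). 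What your approach buys is self-containedness and a conceptual reading of the four relations as unitarity of the corepresentation matrix together with its $S^2$-twist; what the paper's citation buys is brevity. One small quibble: calling the existence of the antipode ``equivalent to the cubic relation'' is loose --- the point is that the relation makes the quantum determinant equal to $1$, hence invertible, which is what the antipode construction needs.
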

\begin{proof}
These relations follow from \cite[Prop.~9.2.8]{KS97}, using $(u^j_k)^*=(-q)^{k-j}A^j_k$ and the quantum determinant condition $\mathcal{D}_q=1$.
\end{proof}

\begin{rem}\label{rem:otherS}
The two $*$-subalgebras of $\Oq$ generated by the first row or the first column respectively, of the matrix
$(u^i_j)$ are both isomorphic to $\Sq$ as can be easily seen.
In the first case the isomorphism is given on generators by the map
$u_1^1\mapsto (z_3)^*$, $u^1_2\mapsto (z_2)^*$ and $u^1_3\mapsto (z_1)^*$;
in the second one by the map $u_1^1\mapsto (z_3)^*$, $u_1^2\mapsto (z_2)^*$ and $u_1^3\mapsto (z_1)^*$.
\end{rem}


\subsection{Equivariant vector bundles}\label{sec:evb}

It is computationally useful to transform the right action $\za$ to a left one 
$\mL{x}a:=a\za S^{-1}(x)$, still commuting with the action $\az$.
The presence of the antipode yields the generalized Leibniz rule:
\begin{equation}\label{glr}
\mL{x}(ab)=(\mL{x_{(2)}}a)(\mL{x_{(1)}}b) \;,
\end{equation}
for all $x\in\Uq$ and $a,b\in\Oq$.
Let $\sigma:\Kq\to\mathrm{End}(\C^n)$ be an $n$-dimensional $*$-representation.
The analogue of (sections of) the equivariant vector bundle associated to $\sigma$
is the collection $\E(\sigma)$ of elements of $\Oq\otimes\C^n$ that are $\Kq$-invariant
for the Hopf tensor product of the actions $\mL{}$ in \eqref{glr} and $\sigma$: 
\begin{align}\label{eq:Esigma}
\E(\sigma) &:= \Oq\!\boxtimes_{\sigma} \C^n \nonumber \\
&:= \big\{\psi \in \Oq \otimes \C^n ~\big|~ \big(\mL{h_{(1)}} \otimes \sigma(h_{(2)})\big)(\psi) = \epsilon(h) \psi \,;\;\; \forall\;h\in\Kq\big\} \;.
\end{align}
As $\E (\sigma)$ is stable under (left and right)
multiplication by $\U_q(\mathfrak{su}(2))$-invariant elements of $\Oq$, we have that $\E(\sigma)$ is an
$\Aq$-bimodule. Since, as mentioned, the actions $\mL{}$ and $\az$ commute, it is also a left $\Aq\rtimes\Uq$-module.

The vector space $\E(\sigma)$ is a left $\Oq$-comodule as well.
It is a well known and general fact that the coproduct of a Hopf algebra
defines two mutually commuting coactions of the algebra upon itself,
called the left and right regular coactions.
The left regular coaction commutes with the left canonical action of any dual Hopf algebra,
and similarly for the pair of right action/coaction. As for the action, it is useful to turn the right regular
coaction of $\Oq$ on itself into a left coaction $\Delta_L$ using the antipode. We define:
\begin{equation}\label{eq:DeltaL}
\Delta_L=(\id\otimes S)\Delta S^{-1} \;.
\end{equation}
Using the properties of a Hopf algebra, one easily checks that this is indeed a left coaction,
$$
(\id\otimes\Delta_L)\Delta_L=(\Delta\otimes\id)\Delta_L \;,
$$
although it is not an algebra morphism due to the presence of the antipode;
so $\Oq$ with $\Delta_L$ is a comodule but not a comodule-algebra.
For $x,y,z\in\Uq$ we have
$$
\Delta_L(f\za x)(y,z)=f(xzS^{-1}(y))=\Delta_L(f)(y,xz)= 
\big( f_{(\bar 1)}\otimes (f_{(\bar 2)}\za x)\big)(y,z) \;,
$$
with $\Delta_L(f) = f_{(\bar 1)}\otimes f_{(\bar 2)}$ in Sweedler notations. This explicitly proves
commutativity of the coaction with the action, that is,
$$
\Delta_L\circ\mL{x}=(\id\otimes\mL{x})\Delta_L
\;,\qquad \forall\;x\in\Uq.
$$
The coaction is extended trivially to $\Oq\otimes\C^n$, and from the previous commutativity we deduce
that the subspace $\E(\sigma)$ is a left $\Oq$-subcomodule.

\subsection{Line bundles and their characteristic classes}\label{sec:line}
Let us give few additional details about the modules that will be used in the next sections.
First of all, we can observe that since $\E (\sigma_1\oplus\sigma_2)\simeq\E(\sigma_1)\oplus\E(\sigma_2)$, it is enough
to focus on irreducible representations of $\Kq$. The irreducible representations $\sigma_{\ell,N}$ that appear in the decomposition
of $\Oq$ are classified by two half-integers $\ell$ and $N$, the \emph{spin} and the \emph{charge}, with the constraints $\ell\geq 0$ and $\ell+N\in\Z$
(cf.~Sec.~2.1 of \cite{DL09b}). For $q=1$ the vector bundle associated to $\sigma_{\ell,N}$ has rank equal to the dimension of the representation, that is $2\ell+1$. 

In particular, the analogue of line bundles are the bimodules
$$
L_N:=\E(\sigma_{0,N})\;, \quad N\in\Z\;
$$
(these were denoted $\Sigma_{0,N}$ in \cite{DL09b}). Projectivity, as one sided modules, can be
explicitly proved as follows. Let $\Psi_N$ be the column vector with components defined by:
\begin{align*}
\psi_{j,k,l}^N &:=\sqrt{[j,k,l]!}\,(z_1^jz_2^kz_3^l)^* \;,
   && \textup{if}\;N>0\; \quad\textup{and with}\quad \; j+k+l=N\,,\\[2pt]
\psi_{j,k,l}^N &:=q^{-N+j-l}\sqrt{[j,k,l]!}\,z_1^jz_2^kz_3^l \;,
   && \textup{if}\;N<0\; \quad\textup{and with}\quad \; j+k+l=-N\,.
\end{align*}
With this, construct a $\rPN \times \rPN$ projection \cite{DL09b}:
\begin{equation}\label{mon-pro}
P_N:=\Psi_N\Psi_N^\dag \;,
\end{equation}
with rows/columns in number $\rPN:=\frac{1}{2}(|N|+1)(|N|+2)$. As shown in~\cite{DL09b}, the map
\begin{subequations}
\begin{equation}\label{eq:isoPsiNa}
L_N\to [\Aq]^{\rPN}P_{-N}\;,\qquad a\mapsto a\Psi^\dag_{-N} \; ,
\end{equation}
is an isomorphism of left $\Aq$-modules, while the map
\begin{equation}\label{eq:isoPsiN}
L_N\to P_N [\Aq]^{\rPN}\;,\qquad a\mapsto \Psi_Na \; ,
\end{equation}
\end{subequations}
is an isomorphism of right $\Aq$-modules. In particular what we have named the defining projection is just the projection $P_{-1}$.

Finitely generated projective $\Aq$-modules are parametrized by three integers, coming from maps
$$
\mathrm{ch}^0_{(\pi_k,\HH_k,F_k)}:K_0(\Aq)\to\Z \;, \quad k=0,1,2 \;,
$$
associated to three $1$-summable Fredholm modules $(\pi_i,\HH_i,F_i)$.
The pairing with a projection gives `rank', `charge' and
`instanton number' of the associated vector bundle.
For line bundles, it was computed in \cite[Prop.~4.1]{DL09b} that for any $N\in\Z$:
$$
\mathrm{ch}^0_{(\pi_0,\HH_0,F_0)}([P_N])=1 \;,\quad
\mathrm{ch}^0_{(\pi_1,\HH_1,F_1)}([P_N])=N \;,\quad
\mathrm{ch}^0_{(\pi_2,\HH_2,F_2)}([P_N])=\tfrac{1}{2}N(N+1) \;,
$$
Since these maps are additive,
the direct sum $L_1\oplus L_{-1}$ ---
i.e.~the module corresponding to the projection $P_1\oplus P_{-1}$
--- is the analogue of a vector bundle with rank $2$, charge $0$ and instanton number $1$:
this is the module on which we will construct one-instantons.

\section{The differential structure}

The ``differential'' or ``smooth'' structure of $\CP^2_q$ is described by a differential graded $*$-algebra $(\Omega^\bullet,\dd)$,
with $\Omega^0=\Aq$.
Thus, $\Omega^\bullet=\bigoplus_{k=0}^4\Omega^k$ is a graded associative \mbox{$*$-algebra},
and $\dd:\Omega^\bullet\to\Omega^{\bullet+1}$ is a graded derivation with square zero: $\dd^2=0$. 
In addition, $\dd(\omega^*)=(\dd\omega)^*$
--- i.e.~we have a \emph{real} differential calculus, or a \emph{$*$-calculus} --- and the algebra of forms is generated
by forms of degree $0$ and $1$ (cf.~\cite[Lemma~5.3]{DL09b}).

\begin{rem}\label{rem:2.1}
Here we choose to have $\dd(a^\ast):=(\dd a)^\ast$, instead of
$\dd(a^\ast):=-(\dd a)^\ast$ like in \cite{DL09b} (of course,
one can pass from one notation to the other multiplying the differential
by $\ii=\sqrt{-1}$). With this notation, $\dd$ is a Hermitian connection on free modules.
\end{rem}

In parallel with the classical $\CP^2$ being a complex manifold, differential forms on $\CP^2_q$
form a double complex denoted by $\Omega^{\bullet,\bullet}(\CP^2_q)$ or simply
$\Omega^{\bullet,\bullet}$ \cite{DL09b}. The space of $k$-forms decompose as $\Omega^k=\bigoplus_{i+j=k}\Omega^{i,j}$,
so $\Omega^{\bullet,\bullet}=\bigoplus_k\Omega^k=\bigoplus_{i,j}\Omega^{i,j}$ becomes a bi-graded
algebra, and the differential splits into the sum, $\dd = \de + \deb$, of a holomorphic and antiholomorphic part, respectively 
$\de:\Omega^{\bullet,\bullet}\to\Omega^{\bullet+1,\bullet}$ and $\deb:\Omega^{\bullet,\bullet}\to\Omega^{\bullet,\bullet+1}$.

Due to the graded Leibniz rule and to the conditions $\de^2=\de\deb+\deb\de=\deb^2=0$ (that are equivalent to $\dd^2=0$),
the derivations $\de$ and $\deb$ (and hence $\dd$) are uniquely determined by their restrictions to $0$-forms,
which are described below. Finally, the $*$-structure maps $\Omega^{i,j}$ into $\Omega^{j,i}$ and the reality condition
$\dd(a^*)^*=\dd a$ is equivalent to the condition $\deb a:=(\de a^*)^*$.

\subsection{The differential calculus}\label{se:df}
As mentioned, for a general $x\in\Uq$ we have the generalized Leibniz rule in \eqref{glr} for 
the operator $\mL{x}$ when acting on $\Oq$. Using the $\Kq$-invariance of $\Aq$, for 
$x\in\{E_2,F_2,E_1E_2,F_1F_2\}$ the condition \eqref{glr} reads as  
$$
\mL{x}(ab)=(\mL{x}a)b+a(\mL{x}b) \;,\qquad\forall\;a,b\in\Aq\;,
$$
that is, we have four derivations $\Aq\to\Oq$ given by
$\mL{E_2},\mL{F_2},\mL{E_1E_2},\mL{F_1F_2}$.
 We define:
\begin{equation}\label{eq:2.5}
\de a:=\ii \, q^{-\frac{3}{2}} (a\za E_2,\, a\za E_2E_1)^t \;,\qquad
\deb a:=\ii \, (a\za F_2F_1,\, a\za F_2)^t \;,
\end{equation}
where we multiplied the operators in \cite[Rem.~5.8]{DL09b} by $-\ii$ (cf.~Remark~\ref{rem:2.1}).
It was shown in the appendix of \cite{DDL08b} that
for $q=1$ the operator $\deb$ is the usual Dolbeault
operator.

The one-forms $\de p_{ij}$, with $p_{ij}=z_i^* z_j$ the generators of $\Aq$, are a generating family for $\Omega^{1,0}$ as a one sided (left or right) module \cite[pg.~869]{DL09b}. Similarly $\de p_{ij}$ are a generating family for $\Omega^{0,1}$. An explicit computation gives:
\begin{equation}\label{eq:ddPij}
\de p_{jk}= \ii q^{-1} (u^3_j)^*\binom{u^2_k}{u^1_k} \;,\qquad
\deb p_{jk}= \ii q^{-1} \binom{q^{-\frac{1}{2}}(u^1_j)^*}{-q^{\frac{1}{2}}(u^2_j)^*}u^3_k \;.
\end{equation}
We shall also need the following formul{\ae}, that can be easily
derived from \eqref{eq:2.5} and the explicit formul{\ae} for the
right action of $\Uq$:
\begin{equation}\label{eq:ddfakePij}
\de(z_kz_l^*)=\ii q^{-2}\binom{u^2_k}{u^1_k}z_l^* \;,\qquad
\deb(z_kz_l^*)=\ii q^{-2}z_k\binom{q^{-\frac{1}{2}}(u^1_l)^*}{-q^{\frac{1}{2}}(u^2_l)^*} \;.
\end{equation}

\begin{table}[t]

\begin{center}
\begin{tabular}{ccc}
\begindc{\commdiag}[30]
 \obj(3,5)[A]{$\;\;\;\Omega^{0,0}$}
 \obj(2,4)[B]{$\;\;\;\Omega^{0,1}$}
 \obj(4,4)[C]{$\;\;\;\Omega^{1,0}$}
 \obj(1,3)[D]{$\;\;\;\Omega^{0,2}$}
 \obj(3,3)[E]{$\;\;\;\Omega^{1,1}$}
 \obj(5,3)[F]{$\;\;\;\Omega^{2,0}$}
 \obj(2,2)[G]{$\;\;\;\Omega^{1,2}$}
 \obj(4,2)[H]{$\;\;\;\Omega^{2,1}$}
 \obj(3,1)[I]{$\;\;\;\Omega^{2,2}$}
 \mor{A}{B}{}
 \mor{A}{C}{}
 \mor{B}{D}{}
 \mor{B}{E}{}
 \mor{C}{E}{}
 \mor{C}{F}{}
 \mor{D}{G}{}
 \mor{F}{H}{}
 \mor{E}{G}{}
 \mor{E}{H}{}
 \mor{G}{I}{}
 \mor{H}{I}{}
\enddc
& \raisebox{0.84in}{$=\!\!$} &
\begindc{\commdiag}[30]
 \obj(3,5)[A]{$(0,0)$}
 \obj(2,4)[B]{$(\frac{1}{2},\frac{3}{2})$}
 \obj(4,4)[C]{$(\frac{1}{2},-\frac{3}{2})$}
 \obj(1,3)[D]{$(0,3)$}
 \obj(3,3)[E]{$(1,0)\oplus (0,0)$}
 \obj(5,3)[F]{$(0,-3)$}
 \obj(2,2)[G]{$(\frac{1}{2},\frac{3}{2})$}
 \obj(4,2)[H]{$(\frac{1}{2},-\frac{3}{2})$}
 \obj(3,1)[I]{\smash[b]{$(0,0)$}}
 \mor{A}{B}{}
 \mor{A}{C}{}
 \mor{B}{D}{}
 \mor{B}{E}{}
 \mor{C}{E}{}
 \mor{C}{F}{}
 \mor{D}{G}{}
 \mor{F}{H}{}
 \mor{E}{G}{}
 \mor{E}{H}{}
 \mor{G}{I}{}
 \mor{H}{I}{}
\enddc
\end{tabular}
\end{center}

\caption{In the diamond on the right, in position $(i,j)$ we put
spin and charge $(\ell,N)$ of the representation corresponding to the
bimodule $\Omega^{i,j}$.}\label{tab}
\end{table}

We now recall how the bi-graded $*$-algebra $\Omega^{\bullet,\bullet}(\CP^2_q)$ is defined.
Each space of forms
$\Omega^{i,j}$ is defined as a bimodule associated, like in \eqref{eq:Esigma},
to a suitable representation denoted $\sigma^{i,j}:\Kq\to\mathrm{Aut}(V^{i,j})$.
The relevant representations for the present calculus are listed in Table~\ref{tab}, the only occurring ones being of type
$\sigma_{\ell,N}$ with $\ell=0,\frac{1}{2},1$, and one can check that the elements \eqref{eq:ddPij}
belong to $\Omega^{1,0}$ and $\Omega^{0,1}$ respectively. 
Note that (cf.~Table 1) the module of two-forms is the direct sum of four submodules:
\begin{equation}\label{eq:dec2forms}
\Omega^2=\Omega^{2,0}\oplus \Omega^{1,1}_v\oplus\Omega^{1,1}_s \oplus\Omega^{0,2} \;,
\end{equation}
where $\Omega^{1,1}_v$ is the module associated with the $3$-dimensional representation of $\Kq$ with spin $\ell=1$ and
charge $N=0$ and $\Omega^{1,1}_s$ is the module associated with the trivial $1$-dimensional representation. We will use
this decomposition later on.

Let $V_{\ell,N}$ be the vector space underlying the representation $\sigma_{\ell,N}$ and let
$J:V_{\ell,N}\to V_{\ell,-N}$ be the antilinear map given by
$$
Ja=a^* \;,\quad
J(v_1,v_2)^t=(-q^{-\frac{1}{2}}v_2^*,q^{\frac{1}{2}}v_1^*)^t \;,\quad
J(w_1,w_2,w_3)^t=(-q^{-1}w_3^*,w_2^*,-qw_1^*)^t \;,
$$
for any $a\in V_{0,N}$, $v\in V_{\frac{1}{2},N}$ and $w\in V_{1,N}$ respectively
(the three cases in Table~\ref{tab}). 
Then, a graded involution on $\Omega^{\bullet,\bullet}$  
is defined by
$\omega^*:=(-1)^iJ(\omega)$ for all $\omega\in\Omega^{i,j}$ \cite[Lemma~5.2]{DL09b}. 

The last ingredient we need to recall is the definition of the product.
The first step is to define a product on $V^{\bullet,\bullet}:=\bigoplus_{i,j}V^{i,j}$ as in
\cite[Prop.~5.1]{DL09b}, a result which we quote below.

\begin{prop}\label{pr:const}
A left $\Kq$-covariant graded associative product $\wprod$ on $V^{\bullet,\bullet}$,
sending real vectors to real vectors and graded commutative for $q=1$, is given by
\begin{align*}
V^{0,1}\times V^{0,1} &\to V^{0,2}\;,
   & v\wprod w &:=c_0\mu_0(v,w)^t\;,\\
V^{0,1}\times V^{1,0} &\to V^{1,1}\;,
   & v\wprod w &:=\bigl(c_1\mu_1(v,w),c_2\mu_0(v,w)\bigr)^t \;,\\
V^{0,1}\times V^{2,1} &\to V^{2,2}\;,
   & v\wprod w &:=c_3\mu_0(v,w)^t\;,\\
V^{0,1}\times V^{1,1} &\to V^{1,2}\;,
   & v\wprod w &:=\frac{c_0}{[2]c_1}\mu_2(v,w)^t-\frac{c_0}{[2]c_2}vw_4 \;,\\
V^{1,0}\times V^{1,0} &\to V^{2,0}\;,
   & v\wprod w &:=c_4\mu_0(v,w)^t\;,\\
V^{1,0}\times V^{0,1} &\to V^{1,1}\;,
   & v\wprod w &:=\bigl(-q^{\frac{1}{2}s}c_1\mu_1(v,w),q^{-\frac{3}{2}s}c_2\mu_0(v,w)\bigr)^t \;,\\
V^{1,0}\times V^{1,2} &\to V^{2,2}\;,
   & v\wprod w &:=\frac{c_3c_4}{c_0}\mu_0(v,w)^t\;,\\
V^{1,0}\times V^{1,1} &\to V^{2,1}\;,
   & v\wprod w &:=-q^{-\frac{1}{2}s}\frac{c_4}{[2]c_1}\mu_2(v,w)^t-q^{\frac{3}{2}s}\frac{c_4}{[2]c_2}vw_4 \;,\\
V^{1,2}\times V^{1,0} &\to V^{2,2}\;,
   & v\wprod w &:=\frac{c_3c_4}{c_0}\mu_0(v,w)^t\;,\\
V^{2,1}\times V^{0,1} &\to V^{2,2}\;,
   & v\wprod w &:=c_3\mu_0(v,w)^t\;,\\
V^{1,1}\times V^{0,1} &\to V^{1,2}\;,
   & v\wprod w &:=-q^{-\frac{1}{2}s}\frac{c_0}{[2]c_1}\mu_3(v,w)^t-q^{\frac{3}{2}s}\frac{c_0}{[2]c_2}v_4w \;,\\
V^{1,1}\times V^{1,0} &\to V^{2,1}\;,
   & v\wprod w &:=\frac{c_4}{[2]c_1}\mu_3(v,w)^t-\frac{c_4}{[2]c_2}v_4w \;,\\
V^{1,1}\times V^{1,1} &\to V^{2,2}\;,
   & v\wprod w&:=-q^{-\frac{1}{2}s}\frac{c_3c_4}{[2]|c_1|^2}\mu_4(v,w)-q^{\frac{3}{2}s}\frac{c_3c_4}{[2]|c_2|^2}v_4 w_4 \;,
\end{align*}
where the maps $\mu_i$'s are 
\begin{align*}
\mu_0:\R^2\times\R^2&\to\R\;, &
\mu_0(v,w)&:=[2]^{-\frac{1}{2}}(q^{\frac{1}{2}}v_1w_2-q^{-\frac{1}{2}}v_2w_1)\;,\\
\mu_1:\R^2\times\R^2&\to\R^3\;, &
\mu_1(v,w)&:=\bigl(v_1w_1,[2]^{-\frac{1}{2}}(q^{-\frac{1}{2}}v_1w_2+q^{\frac{1}{2}}v_2w_1),v_2w_2\bigr)\;,\\
\mu_2:\R^2\times\R^3&\to\R^2\;, &
\mu_2(v,w)&:=\bigl(qv_1w_2-q^{-\frac{1}{2}}[2]^{\frac{1}{2}}v_2w_1,
              q^{\frac{1}{2}}[2]^{\frac{1}{2}}v_1w_3-q^{-1}v_2w_2\bigr)\;,\\
\mu_3:\R^3\times\R^2&\to\R^2\;, &
\mu_3(v,w)&:=\bigl(q^{\frac{1}{2}}[2]^{\frac{1}{2}}v_1w_2-q^{-1}v_2w_1,
              qv_2w_2-q^{-\frac{1}{2}}[2]^{\frac{1}{2}}v_3w_1\bigr)\;,\\
\mu_4:\R^3\times\R^3&\to\R\;, &
\mu_4(v,w)&:=qv_1w_3-v_2w_2+q^{-1}v_3w_1 \;.
\end{align*}
The parameters $c_0,\ldots,c_4\in\R^\times$ and $s=\pm 1$ are not fixed for the time being.
\end{prop}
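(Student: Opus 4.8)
The plan is to verify the three required properties --- associativity, left $\Kq$-covariance, and the reality condition (plus graded commutativity in the classical limit) --- essentially by reducing everything to the bilinear maps $\mu_0,\dots,\mu_4$ on the underlying representation spaces $V_{\ell,N}$, and then checking a finite list of compatibility identities among these maps. First I would set up the bookkeeping: each $V^{i,j}$ is one of the spaces $V_{0,\ast}$, $V_{\frac12,\ast}$, $V_{1,\ast}$, so a graded product $\wprod$ on $V^{\bullet,\bullet}$ is specified by the finite collection of $\Kq$-intertwiners $V^{i,j}\otimes V^{k,l}\to V^{i+k,j+l}$. Since the relevant tensor products of spin $0,\frac12,1$ representations of $\U_q(\mathfrak{su}(2))$ (the spin part of $\Kq$) decompose multiplicity-freely --- e.g.~$V_{\frac12}\otimes V_{\frac12}\cong V_1\oplus V_0$, $V_{\frac12}\otimes V_1\cong V_{\frac32}\oplus V_{\frac12}$ (the $V_{\frac32}$ piece being killed because no spin-$\frac32$ forms occur), $V_1\otimes V_1\cong V_2\oplus V_1\oplus V_0$ --- each intertwiner into a fixed target is unique up to a scalar, and these scalars are exactly the free parameters $c_0,\dots,c_4$ and the sign $s$. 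Thus I would first argue that the listed formulas are, up to the stated scalars, \emph{the} $\Kq$-covariant bilinear maps with the indicated source and target; covariance of $\wprod$ is then immediate, because $\mu_i$ are (suitably normalized $q$-)Clebsch--Gordan maps and this is a Clebsch--Gordan computation for $\U_q(\mathfrak{su}(2))$ that one does once.

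Next I would address reality: the graded involution is $\omega^*=(-1)^iJ(\omega)$ on $\Omega^{i,j}$, with $J$ given explicitly in the three cases, so ``sending real vectors to real vectors'' amounts to checking that for each bilinear map in the list one has $J(v\wprod w)=\pm J(w)\wprod J(v)$ with the sign dictated by the Koszul rule on bidegrees. Because $J$ is antilinear and the $\mu_i$ have real (indeed, $q$-rational) coefficients, this reduces to a symmetry property of each $\mu_i$ under the relevant $J$'s on source and target --- for instance $\mu_0$ is antisymmetric in the appropriate graded sense, $\mu_4$ is the $q$-deformed symmetric pairing on $V_1$, etc. The role of the coefficients $c_i\in\R^\times$ is precisely to keep these compatibilities scalar-free (note $|c_1|^2,|c_2|^2$ appearing in the $V^{1,1}\times V^{1,1}$ line), and the exponents $q^{\pm\frac12 s},q^{\pm\frac32 s}$ with $s=\pm1$ are the one genuine sign ambiguity that survives. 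I would check this line by line; it is routine once the $J$'s and the $\mu_i$ are in hand.

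The main obstacle is associativity. It is not enough that each slot is a scalar multiple of the unique intertwiner: one must check that the scalars in the formulas are mutually consistent so that $(v\wprod w)\wprod u=v\wprod(w\wprod u)$ for \emph{every} admissible triple of bidegrees. Concretely this is a finite but sizeable list of pentagon-type identities: on a triple $(i_1,j_1),(i_2,j_2),(i_3,j_3)$ with total bidegree $(i,j)$, the target $V^{i,j}$ is again multiplicity-free in the triple tensor product, so both bracketings land in the same one-dimensional (or split) space and the content is an identity between products of the $c$'s (and powers of $q^{\pm s}$) against a $q$-Racah/$6j$-type coefficient relating the two ways of composing the $\mu_i$. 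The way I would organize the work is to pick, for each relevant total bidegree, one ``test'' triple of concrete vectors, compute both bracketings using the $\mu_i$ formulas, and read off the constraint; since the statement asserts \emph{existence} of such a product for \emph{suitable unspecified} $c_i,s$, it suffices to show the resulting linear (in logarithms) system on $\log|c_i|$ and on $s$ is consistent --- and in fact the quoted formulas are already written so that all the $c$-dependence cancels in each associativity check (this is why ratios like $c_3c_4/c_0$, $c_0/([2]c_1)$ appear), leaving only a handful of $q$-Clebsch--Gordan identities for $\U_q(\mathfrak{su}(2))$ to be verified. Finally, setting $q=1$ collapses each $\mu_i$ to an ordinary antisymmetric Clebsch--Gordan map and the signs $(-1)^{i_1 i_2 + \dots}$ reproduce exactly the exterior-algebra sign rule, giving graded commutativity; this last point is an easy specialization once the general $q$ computation is done.
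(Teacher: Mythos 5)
The paper itself gives no proof of this proposition: it is quoted verbatim from \cite[Prop.~5.1]{DL09b} (the text just before the statement says ``a result which we quote below''), so there is no in-paper argument to compare yours against. Your strategy --- regard each slot as a $\Kq$-intertwiner, use multiplicity-freeness of the spin $0,\tfrac12,1$ Clebsch--Gordan decompositions to pin each component down to a scalar, then verify associativity bracketing by bracketing and let $q\to1$ for graded commutativity --- is the natural one, and is essentially how the verification is organized in the cited paper.

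Two caveats, though. First, the statement is not an existence claim for \emph{suitable} parameters: the displayed product must be covariant and associative for \emph{every} $c_0,\dots,c_4\in\R^\times$ and $s=\pm1$. So your reduction ``it suffices to show the resulting system on $\log|c_i|$ and $s$ is consistent'' aims at the wrong target; what must be checked is that, with the displayed coefficient combinations, the $c$'s and the $q^{\pm s}$ factors cancel identically in each associativity identity (you do say this afterwards, but the two formulations should not be mixed). Relatedly, the assertion that the free scalars are ``exactly'' $c_0,\dots,c_4$ and $s$ is imprecise: a priori there is one scalar per intertwiner component (far more than six, since several targets split as in $V^{1,1}=V_{1,0}\oplus V_{0,0}$), and it is precisely the associativity checks, together with the $q=1$ sign conventions, that force the displayed combinations such as $c_3c_4/c_0$ or $c_0/([2]c_1)$. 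Second, ``sending real vectors to real vectors'' is the elementary observation that all coefficients are real; compatibility with the graded involution built from $J$ is a different matter --- it is the $*$-calculus condition, holds only if $c_0=c_4$, and is deliberately \emph{not} assumed in this proposition (the paper imposes it separately, quoting \cite[Lemma~5.2]{DL09b}). Your reality paragraph conflates the two and, if carried out as written, would impose a constraint the statement does not contain. With these points repaired your outline is sound, but note that the Clebsch--Gordan intertwiner checks for the $\mu_i$ and the finite list of associativity identities, which your proposal only describes, constitute the entire content of the proof.
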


Given that $\Omega^{i,j}:=\E(\sigma^{i,j})=\Oq\boxtimes_{\sigma^{i,j}}V^{i,j}$, 
for $\omega=av\in\Omega^{i,j}$ and $\omega'=a'v'\in\Omega^{i',j'}$, with 
$a,a'\in\Oq$ and $v\in V^{i,j}$, $v'\in V^{i',j'}$, one sets
$$
\omega\wprod\omega' := (a\,a')\, (v\wprod v').
$$
>From left covariance of the product on $V^{\bullet,\bullet}$ it follows that
$\omega\wprod\omega'$ is indeed a form 
 and $\wprod$ defines a bi-graded associative product on $\Omega^{\bullet,\bullet}$.
The datum $(\Omega^{\bullet,\bullet},\wprod)$ is automatically a left $\Aq\rtimes\Uq$-module
algebra, and with the differential $\dd$ --- uniquely defined by \eqref{eq:2.5} and the Leibniz rule ---
we get a covariant differential calculus. This is a $*$-calculus if and only if $c_0=c_4$,
a condition that we assume from now on (cf.~\cite[Lemma~5.2]{DL09b}).


\subsection{Hodge duality and orientation}  
On any module $\E(\sigma)$ of type \eqref{eq:Esigma}, seen as a right module,
a Hermitian structure $(\cdot,\cdot):\E(\sigma)\times\E(\sigma)\to\Aq$ is given by Eq.~(3.1) of \cite{DL09b}:
\begin{equation}\label{eq:canHerm}
(\eta,\xi)=\sum\nolimits_{i=1}^n\eta_i^*\xi_i \;,
\end{equation}
where $\eta_i,\xi_i\in\Oq$ are the components of $\eta,\xi\in\E(\sigma)$ with respect to the canonical basis of $\C^n$.
We have in particular a Hermitian structure on each space of forms $\Omega^{i,j}$, that for $q=1$ is just the one constructed using the metric tensor.

A basis element $\vol_1$ for the rank $1$ free bimodule $\Omega^{2,2}$ is given by
the form having \mbox{non-zero} component only in degree $4$ and equal to $1$. Every other real basis element of $\Omega^{2,2}$
is of the form $\tau_\lambda=\lambda\tau_1$, with $\lambda\in\R\smallsetminus\{0\}$. We
define the Hodge star operator by
\begin{equation}\label{eq:4.4}
\bigl(\Hs\omega,\omega'\bigr)\vol_\lambda =\omega^*\wprod\omega' \;,
\end{equation}
where $(\cdot, \cdot)$ is the Hermitian structure on $\Omega^\bullet$ given by \eqref{eq:canHerm}.
In \cite{DL09b} we set $\lambda=1$. Here we start with a general $\lambda$, and argue
how this parameter is related to the orientation choice.

\begin{prop}\label{lemma:4.3}
One has $\,\Hs^2=(-1)^k\id\,$ on $k$-forms if and only if
\begin{equation}\label{eq:2.8}
|\lambda|=1 \;,\qquad
|c_1|=q^{-\frac{1}{4}s}[2]^{-\frac{1}{4}}\sqrt{|c_0|} \;,\qquad
|c_2|=q^{\frac{3}{4}s}[2]^{-\frac{1}{4}}\sqrt{|c_0|} \;,\qquad
|c_3|=[2]^{\frac{1}{2}} \;.
\end{equation}
For this choice of parameters, the spaces $\Omega^{2,0}\oplus\Omega^{0,2}$, $\Omega^{1,1}_v$ and $\Omega^{1,1}_s$
in the decomposition \eqref{eq:dec2forms} are eigenspaces of $\,\Hs\hspace{-2.5pt}$ with eigenvalues $\lambda$, $-\lambda\,\mathrm{sign}(c_0c_3)$
and $\lambda\,\mathrm{sign}(c_0c_3)$, respectively.
\end{prop}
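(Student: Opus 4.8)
The strategy is to compute $\Hs$ explicitly on each homogeneous component $\Omega^{i,j}$ using the defining relation \eqref{eq:4.4}, the Hermitian structure \eqref{eq:canHerm}, and the product formul{\ae} of Proposition~\ref{pr:const}, and then to impose $\Hs^2=(-1)^k\id$ degree by degree. First I would treat the extreme cases: on $\Omega^{0,0}=\Aq$ one has $\Hs 1=\lambda^{-1}\vol_1$ (pairing with the top form), and on $\Omega^{2,2}$ one has $\Hs\vol_\lambda=\lambda$; composing gives $\Hs^2=\lambda^{-2}\id$ on $0$-forms, which forces $\lambda^2=1$, i.e. $|\lambda|=1$. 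This is the cleanest constraint and fixes the orientation parameter.

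Next I would handle the one-forms. Since $\Omega^1=\Omega^{1,0}\oplus\Omega^{0,1}$ and $*$ interchanges these two summands, $\Hs$ sends $\Omega^{1,0}\to\Omega^{2,1}$ and $\Omega^{0,1}\to\Omega^{1,2}$ (the only pieces of $\Omega^3$ with the right bidegree so that $\omega^*\wprod\omega'$ lands in $\Omega^{2,2}$). I would write a general $v\in V^{1,0}$, compute $v^*=-Jv\in V^{0,1}$, and use the row of Proposition~\ref{pr:const} giving $V^{0,1}\times V^{2,1}\to V^{2,2}$ together with the formula for $\mu_0$ and the map $J$ on $V_{\frac12,N}$, to read off $\Hs v$ as an explicit element of $\Omega^{2,1}$ with coefficients built from $c_0,c_1,c_2,c_3,\lambda$. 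Iterating once more (now $\Omega^{2,1}\xrightarrow{\Hs}\Omega^{1,0}$) and demanding the result equal $-v$ produces a relation among $|c_1|,|c_2|,|c_3|,|c_0|,s$; the antiholomorphic piece gives the conjugate relation. Matching these with \eqref{eq:2.8} is then bookkeeping. The three-forms are handled by the same computation read backwards, so they impose no new condition.

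The substantive case is the two-forms, and this is where I expect the main obstacle. Here $\Hs:\Omega^2\to\Omega^2$ and one must use the decomposition \eqref{eq:dec2forms}. On $\Omega^{2,0}$ and $\Omega^{0,2}$, which are one-dimensional fibrewise, $\Hs$ swaps $\Omega^{2,0}\leftrightarrow\Omega^{0,2}$ (by bidegree counting on $\omega^*\wprod\omega'$), and the relevant products are $V^{1,0}\times V^{1,0}\to V^{2,0}$ and the analogous ones; computing $\Hs^2$ on this two-dimensional space and setting it equal to $+\id$ (sign $(-1)^2$) should come out automatically once the one-form relations \eqref{eq:2.8} hold, and will pin the eigenvalue on $\Omega^{2,0}\oplus\Omega^{0,2}$ to be $\lambda$. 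The genuinely delicate part is $\Omega^{1,1}=\Omega^{1,1}_v\oplus\Omega^{1,1}_s$: here $\Hs$ preserves $\Omega^{1,1}$, so I must compute the $4\times 4$ (fibrewise) matrix of $\Hs$ using the row $V^{1,1}\times V^{1,1}\to V^{2,2}$ of Proposition~\ref{pr:const} (with $\mu_4$ and the $w_4$-term) together with $J$ on $V_{1,N}$ and on the trivial summand, diagonalize it against the splitting $V_{1,0}\oplus V_{0,0}$, and check that the eigenvalues are $-\lambda\,\mathrm{sign}(c_0c_3)$ on the spin-$1$ part and $\lambda\,\mathrm{sign}(c_0c_3)$ on the scalar part. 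Squaring gives $\lambda^2\id=\id$ on $\Omega^{1,1}$ with no further constraint, consistent with $|\lambda|=1$. The bookkeeping of the $q$-powers $q^{\pm s/2},q^{\pm 3s/2}$ and the factors $[2]^{-1/4}$, and keeping track of which $\mu_i$ pairs with which $J$, is the error-prone step; the conceptual content is merely that $\Hs$ acts by a scalar on each irreducible $\Kq$-isotypic component of $\Omega^2$ (Schur's lemma, since $\Hs$ is $\Kq$-equivariant), so only the magnitude and sign of that scalar must be determined, and the magnitude being $1$ is exactly the $|\lambda|=1$ condition propagated through the normalizations in \eqref{eq:2.8}.
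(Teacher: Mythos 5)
Your overall strategy --- computing $\Hs$ bidegree by bidegree from \eqref{eq:4.4}, the Hermitian structure \eqref{eq:canHerm} and the product rows of Proposition~\ref{pr:const}, then imposing $\Hs^2=(-1)^k\id$ --- is exactly the paper's. But two of the concrete predictions your plan relies on are wrong, and they are precisely the steps that produce \eqref{eq:2.8}. First, $\Hs$ does \emph{not} swap $\Omega^{2,0}$ and $\Omega^{0,2}$: since the right-hand side of \eqref{eq:4.4} is $\omega^*\wprod\omega'$ and the pairing $(\cdot,\cdot)$ is sesquilinear and only pairs elements of the same bidegree, for $\omega\in\Omega^{j,i}$ the only $\omega'$ pairing nontrivially lie in $\Omega^{2-i,2-j}$, so $\Hs$ maps $\Omega^{j,i}\to\Omega^{2-i,2-j}$; in particular it preserves each of $\Omega^{2,0}$ and $\Omega^{0,2}$, acting there by $\lambda^{-1}=\lambda$. (Your own bidegree count for one-forms, $\Omega^{1,0}\to\Omega^{2,1}$, already follows this rule, and a swap would contradict the statement you are proving, since $\Omega^{2,0}\oplus\Omega^{0,2}$ could not then be an eigenspace.) A smaller slip of the same kind: $\Hs\vol_\lambda=1$ and $\Hs\vol_1=\lambda^{-1}$, not $\Hs\vol_\lambda=\lambda$, though your conclusion $\Hs^2=\lambda^{-2}\id$ on $0$-forms is correct and does give $|\lambda|=1$.

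Second, you have misallocated where the constraints on $|c_1|,|c_2|$ come from. In the one-form sector the only rows of Proposition~\ref{pr:const} that enter $\omega^*\wprod\omega'$ are $V^{0,1}\times V^{2,1}\to V^{2,2}$ and $V^{1,0}\times V^{1,2}\to V^{2,2}$ (and their mirrors), with coefficients $c_3$ and $c_3c_4/c_0=c_3$ (using $c_0=c_4$); so one-forms only fix $|c_3|=[2]^{\frac{1}{2}}$ (given $|\lambda|=1$) and say nothing about $c_1,c_2$. The relations for $|c_1|,|c_2|$ in \eqref{eq:2.8} arise exactly in the $(1,1)$ sector, via the row $V^{1,1}\times V^{1,1}\to V^{2,2}$: there $\Hs$ acts on the spin-$1$ and scalar parts by $-[2]^{-1}c_0c_3\lambda^{-1}q^{-\frac{1}{2}s}|c_1|^{-2}$ and $[2]^{-1}c_0c_3\lambda^{-1}q^{\frac{3}{2}s}|c_2|^{-2}$, and requiring these to square to $1$ is what yields the second and third conditions of \eqref{eq:2.8} and then the eigenvalues $-\lambda\,\mathrm{sign}(c_0c_3)$ and $\lambda\,\mathrm{sign}(c_0c_3)$. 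As written, your plan expects ``no further constraint'' from $\Omega^{1,1}$ and expects the $|c_1|,|c_2|$ relations from one-forms, so the ``only if'' direction would not close; the fix is simply to rerun your computation with the correct bidegree rule and coefficient bookkeeping, which is what the paper's proof does.
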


\begin{proof}
If $\Hs \omega$ and $\omega'$ are homogeneous with different degree,
both sides of (\ref{eq:4.4}) are zero. It is then enough to
consider the case $\omega\in\Omega^{j,i}$,  
$\omega'\in\Omega^{2-i,2-j}$.  
>From the definition of the involution on forms,
for the possible values of the labels one gets
\begin{align*}
(i,j)=(0,0),(0,2),(2,0),(2,2):\quad
     \omega^*\! &\wprod \omega'=  \omega^\dag \cdot \omega' \;,\\
(i,j)=(0,1),(1,0),(1,2),(2,1):\quad
     \omega^*\! &\wprod \omega'=c_3\mu_0(\omega^*,\omega')=
     (-1)^j[2]^{-\frac{1}{2}} c_3\, \omega^\dag \cdot \omega'  \;,\\
(i,j)=(1,1):\quad
     \omega^*\! &\wprod \omega'=
     \frac{c_3c_4}{[2]}
     \bigl(-q^{-\frac{1}{2}s}|c_1|^{-2}w^\dag,q^{\frac{3}{2}s}|c_2|^{-2}w_4^\dag\bigr)
     \omega' \;.
\end{align*}
Condition \eqref{eq:4.4} is satisfied if (recall that $c_0=c_4$):
$$
(\Hs\,\omega)_{2-i,2-j}=
\begin{cases}
   \lambda^{-1}\omega_{j,i} & \mathrm{if}\;(i,j)=(0,0),(0,2),(2,0),(2,2)\;,\\[5pt]
  (-1)^j[2]^{-\frac{1}{2}}c_3\lambda^{-1}\omega_{j,i}
 & \mathrm{if}\;(i,j)=(0,1),(1,0),(1,2),(2,1)\;, \\[5pt]
 [2]^{-1}c_0c_3\lambda^{-1} (-q^{-\frac{1}{2}s}|c_1|^{-2}w,q^{\frac{3}{2}s}|c_2|^{-2}w_4)
  \hspace{-1.5cm}
 & \hspace{2cm} \mathrm{if}\;(i,j)=(1,1)\;.
\end{cases}
$$
The equation $\Hs^2\omega=(-1)^{\mathrm{dg}(\omega)}\omega$ is satisfied if the $\lambda,c_i$ are
given by \eqref{eq:2.8}. From \eqref{eq:2.8} and the equations above, the last statement easily follows.
\end{proof}

Classically (see e.g.~\cite{DK90}), on $\CP^2$ with standard orientation a two-form is ASD if and only if
it belongs to $\Omega^{1,1}_v$, while on the space $\overline{\CP}{}^2$
 a two-form is ASD if and only if its component in $\Omega^{1,1}_v$ is zero.
For $q\neq 1$, imposing \eqref{eq:2.8}, all parameters are fixed but for some arbitrary signs 
and a global rescaling encoded in $c_0$. It follows from Proposition~\ref{lemma:4.3} that, if we want
$\Omega^{2,0}\oplus\Omega^{1,1}_s\oplus\Omega^{0,2}$ to be an eigenspace of $\Hs$, as in the
classical case, we are forced to choose $c_0c_3>0$. With this choice, $\lambda=1$ gives the
standard orientation, or $\CP^2_q$, and $\lambda=-1$ the reversed orientation, or $\overline{\CP}{}^2_q$.	
We choose $\lambda=-1$.


\subsection{The K{\"a}hler form}
We define the K{\"a}hler 2-form $\qkahler $ of $\CP^2_q$ as the concrete representation, using the differential calculus, 
of a suitable element in degree two twisted cyclic homology. It is the image of the defining projection 
$p=P_{-1}$ of $\Aq$ under the equivariant
Connes-Chern character (cf.~\cite[Sec.~7]{DL09b}).
The projection $p$ is invariant, in the sense of \cite[Lemma~7.1]{DL09b},
under the representation of $\Uq$ of highest weight $(1,0)$ (cf.~\cite[Lemma~7.3]{DL09b}).
According to \cite[Theorem 7.2]{DL09b},  
we can associate to $p$ a cocycle $\mathrm{ch}^2(p,\rho^{(1,0)})\bigl(K\bigr)$ in twisted cyclic homology.
Here $K=(K_1K_2)^{-4}$ is the element implementing the square of the antipode of $\Uq$
as well as the modular automorphism (for the Haar state) on $\CP^2_q$ (cf.~\cite[Theorem~7.2]{DL09b}).
Esplicitly:
\begin{multline*}
\mathrm{ch}^2(p,\rho^{(1,0)})(K) := \tr_\rho(p^{\otimes 3+1})(K) \\
:= \sum\nolimits_{ijkl} p_{ij} \otimes p_{jk} \otimes p_{kl} \, (\rho^{(1,0)}(K))_{il}
= \sum\nolimits_{jkl}q^{6-2j}p_{jk} \otimes p_{kl} \otimes p_{lj} \;.
\end{multline*}
Using the linear representation coming from the differential calculus, we get the two-form:
$$
\qkahler :=
\sum\nolimits_{jkl}q^{6-2j}p_{jk}\dd p_{kl}\wprod \dd p_{lj} \;.
$$
As shown in Sec.~\ref{sec:AppA}, for $q=1$ the previous formula yields
the usual K{\"a}hler form of $\CP^2$ up to a normalization factor (cf. Eq.~\eqref{eq:kahlerq}).

\begin{prop}
The form $\qkahler$ is a basis of $\Omega^{1,1}_s$ as can be seen from its
explicit expression:
\begin{equation}\label{eq:Kah}
\qkahler =q^{-2}q^{-\frac{3}{2}s}c_2[2]^{\frac{1}{2}}(0,0,0,1)^t \;.
\end{equation}
Furthermore:
\begin{equation}\label{eq:useful}
p_{jk} \,\qkahler=\sum\nolimits_l\de p_{jl}\wprod \deb p_{lk} \;.
\end{equation}
\end{prop}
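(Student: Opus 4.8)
The plan is to reduce both identities to a single direct computation. Set $\kappa:=q^{-2}q^{-\frac{3}{2}s}c_2[2]^{\frac{1}{2}}$, the scalar appearing on the right-hand side of \eqref{eq:Kah}. The core claim is
\[
\sum\nolimits_l \de p_{jl}\wprod\deb p_{lk}=\kappa\,p_{jk}\,(0,0,0,1)^t
\qquad\text{for all }j,k .
\]
To establish this I would substitute the explicit expressions \eqref{eq:ddPij} for $\de p_{jl}$ and $\deb p_{lk}$, regard each as a sum of terms $a\,v$ with $a\in\Oq$ and $v$ a standard basis vector of $V^{1,0}$, resp.\ $V^{0,1}$, and expand the product by the rule $V^{1,0}\times V^{0,1}\to V^{1,1}$ of Proposition~\ref{pr:const}. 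The $\Oq$-coefficients that occur all have the shape $(u^3_j)^*\,u^a_l(u^b_l)^*\,u^3_k$ with $a,b\in\{1,2\}$; since $(u^3_j)^*$ and $u^3_k$ carry no index $l$, the sum over $l$ acts only on $u^a_l(u^b_l)^*$, and Lemma~\ref{lemma:ortho} gives $\sum_l u^a_l(u^b_l)^*=\delta_{ab}$, so only the two diagonal terms survive and each collapses to $(u^3_j)^*u^3_k=p_{jk}$. It remains to add up the corresponding vectors in $V^{1,1}$: the spin-$1$ (i.e.\ $\Omega^{1,1}_v$) parts, built from $\mu_1$, cancel because $q^{\frac12}q^{-\frac12}-q^{-\frac12}q^{\frac12}=0$, while the spin-$0$ (i.e.\ $\Omega^{1,1}_s$) parts, built from $\mu_0$, combine — carrying the overall prefactor $(\ii q^{-1})^2=-q^{-2}$ and the prefactors $-q^{\frac12}$ and $q^{-\frac12}$ inherited from the two slots of $\deb p_{lk}$ — into $\kappa$ times the basis vector $(0,0,0,1)^t$. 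Since $\kappa\neq 0$ this, together with \eqref{eq:Kah}, already shows that $\qkahler$ spans $\Omega^{1,1}_s$ as a free rank-one bimodule.

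Granting the core identity, \eqref{eq:Kah} follows by inserting $\dd=\de+\deb$ in the definition $\qkahler=\sum_{jkl}q^{6-2j}p_{jk}\,\dd p_{kl}\wprod\dd p_{lj}$ and splitting off the four pieces by bidegree. The $\Omega^{2,0}$ piece $\sum_{jkl}q^{6-2j}p_{jk}\,\de p_{kl}\wprod\de p_{lj}$ vanishes: after using \eqref{eq:ddPij} one meets $\sum_l u^a_l(u^3_l)^*=\delta_{a3}=0$ for $a\in\{1,2\}$; symmetrically the $\Omega^{0,2}$ piece vanishes through $\sum_l u^3_l(u^b_l)^*=\delta_{3b}=0$. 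In the $\Omega^{1,1}$ piece $\sum_{jkl}q^{6-2j}p_{jk}\,\deb p_{kl}\wprod\de p_{lj}$ the $l$-sum produces $\sum_l u^3_l(u^3_l)^*=1$ and leaves coefficients $(u^3_j)^*u^3_k(u^a_k)^*u^b_j$ with $a,b\in\{1,2\}$; the ensuing $k$-sum then gives $\sum_k u^3_k(u^a_k)^*=\delta_{3a}=0$, so this piece vanishes as well. Only the mixed piece $\sum_{jkl}q^{6-2j}p_{jk}\,\de p_{kl}\wprod\deb p_{lj}$ remains, and the core identity (with $j$ and $k$ interchanged) rewrites it as $\kappa\bigl(\sum_{jk}q^{6-2j}p_{jk}p_{kj}\bigr)(0,0,0,1)^t=\kappa\bigl(\sum_j q^{6-2j}(p^2)_{jj}\bigr)(0,0,0,1)^t$; since $p^2=p$ and $q^4p_{11}+q^2p_{22}+p_{33}=1$ (the $q$-trace condition) this equals $\kappa(0,0,0,1)^t$, which is exactly \eqref{eq:Kah}. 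Finally \eqref{eq:useful} is immediate: the core identity reads $\sum_l\de p_{jl}\wprod\deb p_{lk}=\kappa\,p_{jk}(0,0,0,1)^t=p_{jk}\bigl(\kappa(0,0,0,1)^t\bigr)=p_{jk}\,\qkahler$, the scalar $\kappa$ passing freely across $p_{jk}$.

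The only real obstacle is bookkeeping. One has to keep the two internal (spin-$\tfrac12$) indices of the one-forms separate from the three matrix indices $j,k,l$, invoke the correct one of the two maps $V^{1,0}\times V^{0,1}\to V^{1,1}$ and $V^{0,1}\times V^{1,0}\to V^{1,1}$ of Proposition~\ref{pr:const} (they differ by the $q^{\pm s}$ factors and a sign), and track the $\ii q^{-1}$ produced by each application of $\de$ or $\deb$. In particular the $-q^{\frac12}$ in the lower slot of $\deb p$ in \eqref{eq:ddPij} must be carried faithfully: it is precisely this sign, together with the explicit form of $\mu_0$ and $\mu_1$, that forces the $\Omega^{1,1}_v$ component to cancel and fixes the coefficient $\kappa$ of $(0,0,0,1)^t$. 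Everything else is dictated by the orthogonality relations of Lemma~\ref{lemma:ortho}, the idempotency $p^2=p$, and the $q$-trace condition.
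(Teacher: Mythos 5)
Your proposal is correct and follows essentially the same route as the paper: establish $\sum_l\de p_{jl}\wprge$... more precisely, the contracted identity $\sum_l\de p_{jl}\wprod\deb p_{lk}=q^{-2}q^{-\frac{3}{2}s}c_2[2]^{\frac{1}{2}}p_{jk}(0,0,0,1)^t$ via \eqref{eq:ddPij}, the $V^{1,0}\times V^{0,1}$ rule of Proposition~\ref{pr:const} and Lemma~\ref{lemma:ortho}, then kill the $(2,0)$, $(0,2)$ and $\deb\wprod\de$ pieces by the same orthogonality relations and finish with $p^2=p$ and the $q$-trace condition --- exactly the paper's argument (its Eqs.~\eqref{eq:tempUse}--\eqref{eq:similarly}), with only the cosmetic difference that the paper records the uncontracted product $\de p_{ij}\wprod\deb p_{kl}$ before summing.
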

\begin{proof}
Using \eqref{eq:ddPij}, an explicit computation gives
\begin{align*}
\de p_{ij}\wprod \deb p_{kl}
&=-q^{-2}(u^3_i)^*\binom{u^2_j}{u^1_j}\wprod\binom{q^{-\frac{1}{2}}(u^1_k)^*}{-q^{\frac{1}{2}}(u^2_k)^*}u^3_l \\
&=-q^{-2}(u^3_i)^*\begin{bmatrix}
-q^{\frac{1}{2}(s-1)}c_1u^2_j(u^1_k)^* \\[2pt]
q^{\frac{1}{2}s}c_1[2]^{-\frac{1}{2}}\big(u^2_j(u^2_k)^*-u^1_j(u^1_k)^*\big) \\[2pt]
q^{\frac{1}{2}(s+1)}c_1u^1_j(u^2_k)^* \\[2pt]
-q^{-\frac{3}{2}s}c_2[2]^{-\frac{1}{2}}\big(qu^2_j(u^2_k)^*+q^{-1}u^1_j(u^1_k)^*\big)
\end{bmatrix}u^3_l \;.
\end{align*}
In particular, using Lemma~\ref{lemma:ortho}:
\begin{equation}\label{eq:tempUse}
\sum\nolimits_j\de p_{ij}\wprod \deb p_{jk}
=q^{-2}q^{-\frac{3}{2}s}c_2[2]^{\frac{1}{2}}p_{ik}(0,0,0,1)^t \;.
\end{equation}
Similarly,
\begin{align}
\sum\nolimits_{abc}
q^{6-2a}p_{ab}\deb p_{bc}\wprod\de p_{ca}
&=
-q^{-2}\sum\nolimits_{ab}
q^{6-2a}p_{ab}\binom{q^{-\frac{1}{2}}(u^1_b)^*}{-q^{\frac{1}{2}}(u^2_b)^*}
\wprod\binom{u^2_a}{u^1_a}=0 \notag\;,\\
\sum_c\de p_{bc}\wprod \de p_{ca} &=\sum_c\deb p_{bc}\wprod \deb p_{ca}=0 \;.
\label{eq:similarly}
\end{align}
This gives
$\qkahler =
\sum\nolimits_{abc}q^{6-2a}p_{ab}\de p_{bc}\wprod \deb p_{ca}
=q^{-2}q^{-\frac{3}{2}s}c_2[2]^{\frac{1}{2}}(0,0,0,1)^t$, where
last equality comes from \eqref{eq:tempUse} together with the $q$-trace relation $\sum_aq^{6-2a}p_{aa}=1$.
This proves \eqref{eq:Kah}.
Comparing it with \eqref{eq:tempUse} we get \eqref{eq:useful}.
\end{proof}

\noindent
As for the classical $\overline{\CP}{}^2$, the K{\"ahler} form is a basis for ASD forms
of type $(1,1)$ on $\overline{\CP}{}^2_q$.s


\subsection{Differential forms with coefficients in line bundles}\label{sec:dfwcL}
In Sec.~\eqref{se:df} the bimodules of differential forms were defined as $\Omega^{i,j}=\E(\sigma_{\ell,n})$, 
with the relevant values of $\ell,n$ given by Table \ref{tab}. 
Similarly, we shall refer to elements of the bimodule
$$
\Omega^{i,j}(L_N):=\E(\sigma_{\ell,n+N})
$$
(for the same relevant $\ell,n$) as forms with coefficients in the line bundle $L_N=\E(\sigma_{0,N})$ of Sec.~\ref{sec:line}. 
There is a bimodule isomorphism given by
$$
L_N\otimes_{\Aq}\Omega^{i,j}\to\Omega^{i,j}(L_N)
\;,\qquad
a\otimes_{\Aq}\omega\mapsto a\cdot\omega \;,
$$
with inverse
$$
\Omega^{i,j}(L_N)\to L_N\otimes_{\Aq}\Omega^{i,j}
\;,\qquad
\omega\mapsto \Psi^\dag_N\,\dot\otimes_{\Aq}\,\Psi_N\omega \;,
$$
where $\dot\otimes_{\Aq}$ the algebraic tensor product composed with row-by-column multiplication.
Similarly there is a bimodule isomorphism
$$
\Omega^{i,j}\otimes_{\Aq}L_N\to\Omega^{i,j}(L_N)
\;,\qquad
\omega\otimes_{\Aq} a\mapsto \omega\cdot a \;,
$$
with inverse
$$
\Omega^{i,j}(L_N)\to\Omega^{i,j}\otimes_{\Aq}L_N
\;,\qquad
\omega\mapsto \omega\Psi^\dag_N\,\dot\otimes_{\Aq}\,\Psi_N \;.
$$
As a consequence, $\Omega^k\otimes_{\Aq}L_N$ and $\Omega^k\otimes_{\Aq}L_N$ are isomorphic as bimodules.
The former space would be the source and target of left-module connections on $L_N$,
while the latter would be the source and target of right-module connections.
Due to the above bimodule isomorphisms, we shall think of both left-module and right-module connections on $L_N$ as maps
from $\Omega^k(L_N)$ to $\Omega^{k+1}(L_N)$.


\section{Instantons on $\CP^2_q$}\label{sec:3}
Classically, one-instantons on $\CP^2$ are connections on the vector bundle
associated with a $\SU(2)$-principal bundle $\mathcal{P}\to\CP^2$ via the
fundamental representation of $\SU(2)$. The total space of such a principal bundle is
$\mathcal{P}=\mathrm{S}^5\times_{\mathrm{U}(1)}\SU(2)$ where $u \in \mathrm{U}(1)$ acts on the coordinates $z_i$'s of 
$\mathrm{S}^5$ by multiplication and the embedding $\mathrm{U}(1)\to \SU(2)$ is given by
$$
u \mapsto\maa{ u & 0 \\ 0 & \bar u } \;.
$$

The vector bundle associated with the fundamental representation of $\SU(2)$ is then
$\mathcal{P}\times_{\SU(2)}\C^2\simeq \mathrm{S}^5\times_{\mathrm{U}(1)}\C^2$ and the representation
of $\mathrm{U}(1)$ on $\C^2$ is the sum of the fundamental one and its dual. The resulting vector
bundle is the direct sum of the tautological bundle with its dual,
and the corresponding module of sections is $L_1\oplus L_{-1}$.
This is the module where we will construct one-instantons for $q\neq 1$.


\subsection{The reducible instanton}

Using the module isomorphism \eqref{eq:isoPsiN} one trasports
on $L_N$ the Grassmannian connection $P_N \circ \dd\,$ of $P_N\Aq^{\rPN}$, with $P_N$ the corresponding projection in \eqref{mon-pro}. 
The result is a
right-module connection $\nabla_{\!N}:\Omega^k(L_N)\to\Omega^{k+1}(L_N)$
given, for all $\eta\in\Omega^k(L_N)$, by
\begin{equation}\label{eq:conN}
\nabla_{\!N}\eta=\Psi_N^\dag\dd(\Psi_N\eta) \;.
\end{equation}
 On the other hand, viewing $L_N$ as a left module, and using the isomorphism \eqref{eq:isoPsiNa} to trasport
on it the Grassmannian connection of $\Aq^{\rPN}P_{-N}$, one gets
the left module connection $\nabla_{\!N}^\ell$ given, for all $\eta\in\Omega^k(L_N)$ by 
\begin{equation}\label{eq:conN-left}
\nabla_{\!N}^\ell\eta=\dd(\eta\Psi_{-N}^\dag)\,\Psi_{-N} \;.
\end{equation}

Being $\nabla_N$ a right-module connection,  
it satisfies by construction a `right' Leibniz rule:
$$
\nabla_N(\eta a)=\nabla_N(\eta)a+\eta\,\dd a \;,
$$
for any $a\in\Aq$ and $\eta\in L_N$, while $\nabla_{\!N}^\ell$ satisfies a left version of the above. However, 
since $L_N$ is a bimodule,  there is also a `left' Leibniz rule for $\nabla_N$, that is for a product $a\eta$ (and similarly 
a `right' Leibniz rule for $\nabla_{\!N}^\ell$, that is for a product $\eta a$).
\begin{prop}
For any $a\in\Aq$ and $\eta\in L_N$: 
\begin{equation}\label{eq:leftLeib}
\nabla_{\!N}(a\eta)
=a\nabla_{\!N}(\eta)+q^N(\dd a)\eta \;.
\end{equation}
\end{prop}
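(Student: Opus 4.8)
The plan is to compute directly from the defining formula \eqref{eq:conN}, $\nabla_N\eta=\Psi_N^\dag\dd(\Psi_N\eta)$, applied to the element $a\eta$ with $a\in\Aq$ and $\eta\in L_N$. Since $\Psi_N$ has entries in $\Oq$ and $a\in\Aq\subset\Oq$, the only subtlety is that $\Psi_N$ does not commute with $a$; the product $\Psi_N a$ must be rewritten as $(\text{something})\Psi_N$ using the covariance properties of $\Psi_N$. So the first step is to record the commutation rule between $a$ and the column vector $\Psi_N$.

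First I would use the fact that $\Psi_N$ transforms in the representation $\sigma_{0,N}$ under the relevant action, which is one-dimensional; concretely, the key identity is that for $a\in\Aq$ one has $\Psi_N\,a = q^{?}\,(a')\,\Psi_N$ for a suitable element, but in fact the cleanest route is: apply the graded Leibniz rule for $\dd$ to the product $(\Psi_N a)\eta$. Write $\dd(\Psi_N a\eta)=(\dd\Psi_N)(a\eta)+\Psi_N\dd(a\eta)=(\dd\Psi_N)(a\eta)+\Psi_N(\dd a)\eta+\Psi_N a\,\dd\eta$. The term $\Psi_N^\dag(\dd\Psi_N)(a\eta)+\Psi_N^\dag\Psi_N a\,\dd\eta$ is exactly what one gets from $a$ times the Grassmannian connection applied to $\eta$ — except for the ordering of $a$ against $\Psi_N^\dag\Psi_N$ and against $\Psi_N^\dag\dd\Psi_N$. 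Since $\Psi_N^\dag\Psi_N$ is a scalar multiple of $1$ (as follows from the orthogonality relations of Lemma~\ref{lemma:ortho} applied to $\Psi_N$, or directly from $P_N=\Psi_N\Psi_N^\dag$ being a projection with $\Psi_N^\dag\Psi_N=1$ after the chosen normalization), the ordering against $\Psi_N^\dag\Psi_N$ is trivial. The remaining task is to show $\Psi_N^\dag(\dd a)\Psi_N \cdot(\text{scalar})$ produces the factor $q^N$; equivalently, that $\Psi_N^\dag\,\omega\,\Psi_N = q^{N}\,\omega$ for $\omega\in\Omega^1=\Omega^0$-bimodule when $\Psi_N$ is pulled through a one-form.

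The cleanest way to get the $q^N$: the column $\Psi_N$ has each entry a monomial of total degree $|N|$ in the $z_i$'s (for $N>0$, in the $z_i^*$; for $N<0$, in the $z_i$), and under the bimodule structure of $\Omega^1(L_N)$ versus $\Omega^1$ the left and right $\Aq$-actions differ precisely by the $\Kq$-grading carried by $\Psi_N$, which for $\sigma_{0,N}$ is the character $K_1K_2^2\mapsto q^{?N}$. So I would invoke the $\Kq$-covariance of $\Psi_N$: $\mL{K_1K_2^2}\Psi_N = q^{c N}\Psi_N$ for the appropriate constant $c$ fixed by the conventions in Table~\ref{tab} and Section~\ref{sec:line} (with the normalization making this exactly $q^N$), together with the fact that $\dd a$ for $a\in\Aq$ lies in $\Omega^{1,0}\oplus\Omega^{0,1}$ whose $\Kq$-weights are recorded in Table~\ref{tab}. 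Commuting $\Psi_N^\dag$ past $\dd a$ past $\Psi_N$ picks up the product of these weights, and the $(\frac12,\frac32)$ and $(\frac12,-\frac32)$ charges of $\Omega^{0,1}$ and $\Omega^{1,0}$ cancel, leaving only the $N$-dependence from $\Psi_N^\dag$ and $\Psi_N$, which combine to $q^N$.

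The main obstacle I anticipate is bookkeeping the exact power of $q$: one must be careful whether the relevant weight is $q^{N}$ or $q^{-N}$ or $q^{\pm N/2}$, and this depends on the precise normalization of $\psi^N_{j,k,l}$ (note the explicit $q^{-N+j-l}$ prefactor in the $N<0$ case, inserted precisely so that this commutation relation comes out clean) and on the direction in which one pulls the vectors through. I would therefore verify the power by a direct check in the smallest case $N=1$ (the defining projection $p=P_{-1}$, so $L_1$ via $\Psi_1$), using the explicit formula $p_{ij}=z_i^*z_j$, the commutation relations \eqref{eq:sphere} of $\Sq$, and \eqref{eq:ddfakePij} for $\de(z_kz_l^*)$, $\deb(z_kz_l^*)$; matching $q^1$ there pins down the general formula, and additivity of the construction over direct sums is not needed since this is a statement about a single $L_N$. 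The rest is the routine Leibniz-rule manipulation sketched above.
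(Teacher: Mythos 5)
Your plan breaks down at its very first computational step. You expand $\dd(\Psi_N a\eta)=(\dd\Psi_N)(a\eta)+\Psi_N\,\dd(a\eta)$ and then $\dd(a\eta)=(\dd a)\eta+a\,\dd\eta$ as if $\dd$ were an ordinary derivation on $\Oq$. It is not: the operators $\de,\deb$ of \eqref{eq:2.5} satisfy the untwisted Leibniz rule only on $\Aq$; on $\Oq$ (where the entries of $\Psi_N$ and the element $\eta\in L_N$ live) the rule is the twisted one coming from \eqref{glr}, e.g.\ $(fg)\za E_2=(f\za E_2)(g\za K_2)+(f\za K_2^{-1})(g\za E_2)$. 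The $K_2$-twists you drop are exactly the source of the factor $q^N$, via $\Psi_N\za K_2=q^{-N/2}\Psi_N$; indeed, with your untwisted expansion the difference $\nabla_N(a\eta)-a\nabla_N(\eta)$ comes out as $\bigl[\Psi_N^\dag(\dd\Psi_N),a\bigr]\eta+(\dd a)\eta$, with $(\dd a)\eta$ carrying coefficient $1$, and the sandwiched term $\Psi_N^\dag(\dd a)\Psi_N$ that you then propose to evaluate never appears in your own formula. The substitute mechanism you invoke for the $q^N$ --- pulling $\Psi_N^\dag$ and $\Psi_N$ through the one-form $\dd a$ and reading off a power of $q$ from $\Kq$-weights --- is also unjustified: in this calculus one-forms do not $q$-commute with elements of $\Sq$ according to their weights (relations such as $(\de x)x=q^2x(\de x)$ in \eqref{eq:lemma36A} are special identities proved by explicit computation in $\Oq$, and e.g.\ $u^2_1$ commutes with $z_2^*$ but $q$-commutes with $z_1^*$), so an identity of the form $\Psi_N^\dag\,\omega\,\Psi_N=q^N\omega$ would itself require proof and cannot be extracted from the grading alone. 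Finally, verifying the power of $q$ in the single case $N=1$ does not pin down the statement for all $N$.

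The paper's proof avoids all of this by never differentiating a product of non-$\Aq$ elements with the naive rule: one inserts $1=\Psi_N^\dag\Psi_N$ to write $\Psi_Na\eta=(\Psi_Na\Psi_N^\dag)(\Psi_N\eta)$, a product of matrices with entries in $\Aq$, where the ordinary Leibniz rule is legitimate; this reduces everything to the single identity $\Psi_N^\dag\,\dd(\Psi_Na\Psi_N^\dag)\,\Psi_N=q^N\dd a$, which is then proved using the twisted rule on $\Oq$ together with the relations $\Psi_N^\dag(\Psi_N\za E_2)=\Psi_N^\dag(\Psi_N\za F_2)=(\Psi_N^\dag\za E_2)\Psi_N=(\Psi_N^\dag\za F_2)\Psi_N=0$ and $\Psi_N\za K_2=q^{-N/2}\Psi_N$ from \cite{DL09b} (the two $K_2^{-1}$-twists on the outer factors each contribute $q^{N/2}$). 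If you want to repair your argument, this insertion of $\Psi_N^\dag\Psi_N$ and the explicit use of the twisted Leibniz rule with those vanishing and weight relations are the missing ingredients; the weight bookkeeping you describe can at best tell you in which bimodule the various terms live, not the value of the scalar.
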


\begin{proof}
>From (6.5) and (6.6) of \cite{DL09b} and the analogous relations for $N<0$ one has
\begin{equation}\label{eq:follows}
\Psi_N^\dag(\Psi_N\za E_2)=
\Psi_N^\dag(\Psi_N\za F_2)=
(\Psi_N^\dag\za E_2)\Psi_N=
(\Psi_N^\dag\za F_2)\Psi_N=
0 \;,
\end{equation}
for any $N\in\Z$.
Hence \eqref{eq:2.5} and the right $\U_q(\mathfrak{su}(2))$-invariance 
of $\Sq$, yield
$$
\Psi^\dag_N\dd(\Psi_Na\Psi_N^\dag)\Psi_N=
\Psi^\dag_N(\Psi_N\za K_2^{-1})(\dd a)(\Psi_N\za K_2^{-1})^\dag\Psi_N
\;,
$$
which gives $q^N\dd a$ after using (6.4) of \cite{DL09b}, that is $\Psi_N\za K_2=q^{-\frac{N}{2}}\Psi_N$.
This proves that, 
\begin{equation}\label{eq:ddPsi}
\Psi^\dag_N\dd(\Psi_Na\Psi_N^\dag)\Psi_N=q^N\dd a \;,
\end{equation}
for any $a\in\Aq$. 
>From the definition \eqref{eq:conN} and using $\Psi_N^\dag\Psi_N=1$ one gets
$$
\nabla_{\!N}(a\eta)=\Psi_N^\dag\dd(\Psi_Na\Psi_N^\dag\Psi_N\eta) \;,
$$
and, using the Leibniz rule for $\dd$,  
$$
\nabla_{\!N}(a\eta)=a\Psi_N^\dag\dd(\Psi_N\eta)
+\Psi_N^\dag\dd(\Psi_Na\Psi_N^\dag)\Psi_N\eta
=a\nabla_{\!N}(\eta)+\Psi_N^\dag\dd(\Psi_Na\Psi_N^\dag)\Psi_N\eta \;.
$$
Equation \eqref{eq:leftLeib} then follows from \eqref{eq:ddPsi}.
\end{proof}

The anti-linear map $\eta\to\eta^*$ sends $L_N$ to $L_{-N}$.  This is not a bimodule isomorphism;
nevertheless one easily finds that
\begin{equation}\label{left-right}
(\nabla_{\!N}\eta)^*=\nabla_{\!-N}^\ell(\eta^*) \;.
\end{equation}
More generally, the space of connections being affine, 
any connection on $L_N$ is obtained from $\nabla_N$ (or $\nabla_{\!N}^\ell$)
by adding a right (or left) module endomorphism of $L_N$ with coefficients in one-forms, that is an element in $\Omega^1$, acting by multiplication
from the left (or from the right, respectively).  Thus, any right-module connection
on $L_N$ is of the type $\nabla_N+\omega\wprod(\,.\,)$ and any left-module connection on
$L_{-N}$ is of the type $\nabla_N+(\,.\,)\wprod\omega$ with $\omega\in\Omega^1$;
with a slight abuse of terminology, we shall refer to $\omega$, in both cases, as the \emph{connection one-form}.
Since the projections $P_N$ are self-adjoint, one can check that the connection
$\nabla_{\!N}$ is compatible with the Hermitian structure \eqref{eq:canHerm}, this meaning
$(\nabla_N\eta,\xi)+(\eta,\nabla_N\xi)=\dd(\eta,\xi)$. As a consequence, a
connection $\nabla_N+\omega$ is Hermitian if and only if $(\omega\eta,\xi)+(\eta,\omega\xi)=0$, that is
$\omega=-\omega^*$.
Then, for Hermitian connections it follows that
\begin{equation}\label{con-left-right}
(\nabla_{\!N}\eta+\omega\wprod\eta)^*=\nabla_{\!-N}^\ell(\eta^*)+\eta^*\wprod\omega \;.
\end{equation}
Thus, conjugation trasform a right-module Hermitian connection into a left module Hermitian connection with the same connection one-form.

\medskip

The curvature of the connection in \eqref{eq:conN} is the operator of multiplication \emph{from the left} by the
(scalar) two-form $\F_N$ given by~\cite[eqn.~(6.3)]{DL09b}:
\begin{equation}\label{eq:FN}
\F_N=\Psi_N^\dag(\dd P_N\wprod \dd P_N)\Psi_N \;.
\end{equation}
On the other hand, the curvature of the left module connection $\nabla_{\!N}^\ell$ is the operator of multiplication
\emph{from the right} by the two-form $\F_{-N}$ given by the same formula above,
but with $N$ replaced by $-N$. We will work with right modules from now on,
but we stress that trading right modules (and connections) for left ones
simply amounts to changing sign to the label $N$.

\begin{prop}
The curvature two-form $\F_N$ is proportional to the K{\"a}hler form, hence it is ASD,
for any $N\in\Z$. More precisely
$$
\F_N=
\begin{cases}
q^{N-1}[N]\,\omega_q & \forall\;N\geq 0, \\[4pt]
q^{N+\frac{3}{2}s+2}[N]\,\qkahler & \forall\;N<0.
\end{cases}
$$
\end{prop}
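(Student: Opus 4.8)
The plan is to feed the explicit monopole data into the curvature formula \eqref{eq:FN}, $\F_N=\Psi_N^\dag(\dd P_N\wprod\dd P_N)\Psi_N$, and to reduce everything to the three ``building block'' identities already established for the K\"ahler form: \eqref{eq:tempUse}, \eqref{eq:useful} and the vanishing \eqref{eq:similarly}. Splitting $\dd=\de+\deb$ in \eqref{eq:FN} decomposes $\F_N$ into the pieces $\Psi_N^\dag(\de P_N\wprod\de P_N)\Psi_N$, $\Psi_N^\dag(\de P_N\wprod\deb P_N+\deb P_N\wprod\de P_N)\Psi_N$ and $\Psi_N^\dag(\deb P_N\wprod\deb P_N)\Psi_N$, of bidegree $(2,0)$, $(1,1)$ and $(0,2)$ respectively, which I would treat separately. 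As the base cases one has $\F_0=0$ (since $L_0\cong\Aq$, $P_0=1$, $\dd P_0=0$), while for $|N|=1$ the projection $P_{\pm1}$ is (a $q$-rescaling of) the defining projection, whose entries are the generators $p_{ij}$.

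\emph{Type $(1,1)$ only.} For $N=1$ the $(2,0)$-piece is $\Psi_1^\dag(\de P_1\wprod\de P_1)\Psi_1=\sum_{j,l}z_j\bigl(\sum_k\de p_{jk}\wprod\de p_{kl}\bigr)z_l^*=0$ by \eqref{eq:similarly}, and the $(0,2)$-piece vanishes likewise; the same holds for $N=-1$. For general $N$ the entries of $P_N$ are no longer the generators, but using that $\Psi_N$ is (up to the $q$-multinomial normalisation) the $q$-symmetrised $N$-th power of $\Psi_1$, one can compute $\de P_N$, $\deb P_N$ from \eqref{eq:ddPij}, \eqref{eq:ddfakePij} by the Leibniz rule and reduce the $(2,0)$- and $(0,2)$-components of $\F_N$ to \eqref{eq:similarly} by an induction in $N$; hence they vanish, which is the analogue of ``the curvature has type $(1,1)$''.

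\emph{Proportionality to $\qkahler$ and the constant.} For the surviving $(1,1)$-part I would use the product rules $V^{0,1}\times V^{1,0}\to V^{1,1}$ and $V^{1,0}\times V^{0,1}\to V^{1,1}$ of Proposition~\ref{pr:const}: combined with the explicit $\de P_N,\deb P_N$ they reduce the middle summand, again inductively in $N$, to the elementary identity \eqref{eq:tempUse}, whose $V^{1,1}$-output lies only in the $(0,0,0,1)^t$-direction, i.e.\ in $\Omega^{1,1}_s$. Thus the $\Omega^{1,1}_v$-component is automatically zero and, comparing with \eqref{eq:Kah}, $\F_N=f_N\,\qkahler$ for some $f_N\in\Aq$. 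That $f_N$ is a real constant follows structurally from the fact that $\Omega^{1,1}_s$ carries the trivial $\Kq$/$\Uq$-representation (so $f_N$ is $\Uq$-invariant, hence a scalar), or directly by contracting with $\Psi_N^\dag(\,\cdot\,)\Psi_N$, using $\Psi_N^\dag\Psi_N=1$ and the $q$-trace relation $\sum_aq^{6-2a}p_{aa}=1$. Its value is read off from one convenient matrix entry: the geometric series in $q$ produced by the $|N|$ repeated $z_3^*$ (resp.\ $z_1$) factors in $\Psi_N$ assembles into $q^{N-1}[N]$ for $N\geq0$ and into $q^{N+\frac32s+2}[N]$ for $N<0$, the extra prefactors in the second case being precisely the $q^{-N+j-l}$ in the definition of $\psi^N_{j,k,l}$. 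Finally $\F_N\propto\qkahler\in\Omega^{1,1}_s$ is ASD because, with the chosen orientation $\lambda=-1$ and $c_0c_3>0$, Proposition~\ref{lemma:4.3} gives $\Hs\qkahler=-\qkahler$.

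\emph{Main difficulty and a shortcut.} The genuine obstacle is the case $|N|>1$: there $P_N$ is no longer the defining projection, so both the vanishing of the $(2,0)/(0,2)$-parts and — above all — the extraction of the constant require a careful induction, the delicate point being to make the many $q$-powers (and the sign $s$) coalesce into $[N]$. The cleanest way to organise this is via the $q$-symmetric-power structure of $\Psi_N$, which should yield the recursion $\F_N=\F_{N-1}+q^{2(N-1)}\F_1$ for $N\geq1$ (consistent with the claim, since $q^{N-1}[N]=q^{N-2}[N-1]+q^{2N-2}$) together with a downward analogue for $N<0$; then the statement follows by induction from $\F_0=0$ and the directly computed $\F_{\pm1}$, the remaining cost being to identify $\nabla_N$ with the connection induced on the $q$-symmetric power.
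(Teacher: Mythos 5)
Your handling of $|N|=1$ and of the final ASD conclusion is essentially the paper's: $\F_{\pm 1}$ is computed from \eqref{eq:FN} with the explicit $\Psi_{\pm1}$, using \eqref{eq:ddPij}, \eqref{eq:ddfakePij}, Lemma~\ref{lemma:ortho} and the identity \eqref{eq:useful} (plus vanishings of the type \eqref{eq:similarly} and $\sum_j z_j\deb p_{ij}=\sum_j(\de p_{ij})z_j=0$ for the unwanted bidegrees and the cross term), and anti-selfduality of anything proportional to $\qkahler$ then follows from Proposition~\ref{lemma:4.3} with $\lambda=-1$ and $c_0c_3>0$. So far this matches the proof in the text.

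The genuine gap is the case $|N|\geq 2$, which you yourself flag as the main difficulty but do not resolve. The paper does not re-derive this step either: it imports the reductions $\F_N=q^{N-1}[N]\,\F_1$ for $N\geq 0$ (eqn.~(6.8) of \cite{DL09b}) and $\F_N=-q^{N+1}[N]\,\F_{-1}$ for $N<0$ (the argument after Lemma~6.2 of \cite{DL09b}), where they are obtained by direct computation with the vectors $\Psi_N$. You replace this by the assertion that $\Psi_N$ is a $q$-symmetrised power of $\Psi_1$, that $\nabla_N$ is the connection induced on $L_{N-1}\otimes_{\Aq}L_1$, and that this ``should yield'' the recursion $\F_N=\F_{N-1}+q^{2(N-1)}\F_1$. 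The recursion is numerically consistent with the claim (indeed $q^{N-1}[N]=q^{N-2}[N-1]+q^{2N-2}$), but none of its ingredients is proved: the identification of $\nabla_N$ with the tensor-product connection, and above all the precise $q$-weight $q^{2(N-1)}$, are exactly where the noncommutativity bites --- the weight comes from the twisted left Leibniz rule \eqref{eq:leftLeib} (the factor $q^N$), and without establishing that, the induction is a plan rather than a proof. The same looseness affects the constant for $N<0$: the factor $-q^{\frac{3}{2}s+1}$ in $\F_{-1}$ is not ``read off from one convenient matrix entry'' but is the outcome of an explicit wedge computation using \eqref{eq:ddfakePij}, the product rules of Proposition~\ref{pr:const} (which is where the dependence on $s$ enters) and Lemma~\ref{lemma:ortho}; your sketch does not carry this out. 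To complete your argument you would either have to perform the $q$-symmetric-power bookkeeping in detail or, as the paper does, invoke the general-$N$ computation of \cite{DL09b}.
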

\begin{proof}
The proportionality constant has been computed in \cite{DL09b}
for $N\geq 0$ (cf.~eqn.~(6.8) there), resulting into $\F_N=q^{N-1}[N]\F_1$.

Since $\Psi_1^\dag=(z_1,z_2,z_3)$ and $\sum_jz_j\deb p_{ij}=\sum_j(\de p_{ij})z_j=0$, it follows that
$$
\F_1=\sum_{abc}z_a(\dd p_{ab}\wprod\dd p_{bc})z_c^*
    =\sum_{abc}z_a(\de p_{ab}\wprod\deb p_{bc})z_c^*
$$
and from \eqref{eq:useful} we get $\F_1=\omega_q$. This proves the statement for $N\geq 0$.

For negative $N$, with the same proof after Lemma 6.2 of \cite{DL09b} one gets
$\F_N=-q^{N+1}[N]\F_{-1}$. Since $\Psi_{-1}=(q^2z_1,qz_2,z_3)^t$, it follows that
$$
\F_{-1}=\sum_{abc}q^{6-2a}q^{6-2b}q^{6-2c}z_a^*\dd (z_az_b^*)\wprod\dd (z_bz_c^*)z_c \;.
$$
In turn, from \eqref{eq:ddfakePij} and the orthogonality relations in Lemma~\ref{lemma:ortho}:
$$
\sum\nolimits_aq^{6-2a}z_a^*\de (z_az_b^*)=\sum\nolimits_cq^{6-2c}\deb (z_bz_c^*)z_c=0 \:, 
$$
and
\begin{multline*}
\sum_{abc}q^{6-2a}q^{6-2b}q^{6-2c}z_a^*\deb (z_az_b^*)\wprod\de (z_bz_c^*)z_c
=-q^{-4}\sum\nolimits_bq^{6-2b}\binom{q^{-\frac{1}{2}}(u^1_b)^*}{-q^{\frac{1}{2}}(u^2_b)^*}\wprod
\binom{u^2_b}{u^1_b} \\
=-q^{-4}\sum\nolimits_bq^{6-2b}
\begin{bmatrix}
c_1q^{-\frac{1}{2}}(u^1_b)^*u^2_b \\[2pt]
c_1[2]^{-\frac{1}{2}}\big(q^{-1}(u^1_b)^*u^1_b-q(u^2_b)^*u^2_b\big) \\[2pt]
-c_1q^{\frac{1}{2}}(u^2_b)^*u^1_b \\[2pt]
c_2[2]^{-\frac{1}{2}}\big((u^1_b)^*u^1_b+(u^2_b)^*u^2_b\big)
\end{bmatrix} 
=-q^{-4}\sum\nolimits_b
\begin{bmatrix}
0 \\
0 \\
0 \\
c_2q^3[2]^{\frac{1}{2}}
\end{bmatrix}=-q^{\frac{3}{2}s+1}\qkahler \;.
\end{multline*}
Hence $\F_{-1}=-q^{\frac{3}{2}s+1}\qkahler$ and $\F_N=q^{N+\frac{3}{2}s+2}[N]\,\qkahler$ for all $N<0$.
\end{proof}

As a consequence of previous proposition, the direct sum
\begin{equation}\label{eq:1on1}
\widetilde{\nabla}_0:=\nabla_1\oplus\nabla_{-1}
\end{equation}
is a reducible ASD connection on $L_1\oplus L_{-1}$. The reason to call $\widetilde{\nabla}_0$ an instanton comes from the already mentioned fact that the module $L_1\oplus L_{-1}$ has the correct `topological numbers' , i.e rank $2$, charge $0$, and instanton number $1$, respectively.

Next subsections are devoted to construct irreducible instantons on $L_1\oplus L_{-1}$.


\subsection{A family of instantons}\label{se:1-pfi}
The space of connections (on a fixed right module) being affine, 
any connection on $L_1\oplus L_{-1}$ can be obtained from $\widetilde{\nabla}_0$
by adding a right module endomorphism with coefficients in $\Omega^1$.
 Module maps $P_n\Aq^{r_n}\to P_m\Aq^{r_m}$ are given
by left multiplication by elements of $P_mM_{r_m\times r_n}(\Aq)P_n$,
so right module maps $L_n\to L_m$ are given by
elements $\Psi_m^\dag T\Psi_n$ with $T\in M_{r_m\times r_n}(\Aq)$.
In particular, recalling that $\Psi_1^\dag=(z_1,z_2,z_3)$
and $\Psi_{-1}=(q^2z_1,qz_2,z_3)^t$,
we see that any right module map $L_{-1}\to L_1$ is a linear combination
of elements $z_iz_j$ with coefficients in $\Aq$; but elements $z_iz_j$ are a
generating family for the module $L_2$, hence the identification
$$
\mathrm{Hom}_{\Aq}(L_{-1},L_1)\simeq L_2 \;.
$$
Therefore, the most general Hermitian connection on $L_1\oplus L_{-1}$ can be written as
\begin{equation}\label{eq:ABPhi}
\widetilde{\nabla}=\widetilde{\nabla}_0+
\begin{pmatrix}
\;A^*-A & \Phi\; \\
\;-\,\Phi^* & B-B^*
\end{pmatrix}
\end{equation}
with $A,B\in\Omega^{0,1}$ and $\Phi\in \Omega^1(L_2)$. 

\begin{rem}\label{rem:ac}
As mentioned, the Grassmannian connection (for a self-adjoint projection) is compatible with the Hermitian structure
\eqref{eq:canHerm}, thus $(\widetilde{\nabla}_0\eta,\xi)+(\eta,\widetilde{\nabla}_0\xi)=\dd(\eta,\xi)$.
If $\widetilde{\nabla}$ is also Hermitian, the difference $\omega:=\widetilde{\nabla}-\widetilde{\nabla}_0$
satisfies $(\omega\eta,\xi)+(\eta,\omega\xi)=0$. From the definition \eqref{eq:canHerm}, it follows
that $\omega$ has to be an antihermitian matrix, as in \eqref{eq:ABPhi}.
\end{rem}

In order to proceed, at this point for the $q=1$ case one makes an ansatz \cite{Gro90} on the elements in the $2\times 2$ 
matrix in \eqref{eq:ABPhi}. One possibility could be to make the analogous ansatz here: we could assume that
the element $\Phi$ in \eqref{eq:ABPhi} is proportional to  
\begin{equation}\label{eq:phi}
\phi:=q^{\frac{1}{2}}\sum\nolimits_j\bigl\{z_j(\de p_{j2})z_3-qz_j(\de p_{j3})z_2\bigr\} \:,
\end{equation}
that is $\Phi=C\phi$ with $C\in\Aq$.
Moreover, we could assume that the element $C$ and the 1-forms $A,B$ in \eqref{eq:ABPhi}
are ``functions'' of $p_{11}$ alone:
$$
C = C(p_{11}) \;, \quad A = A(p_{11}) \;, \quad B = B(p_{11}) \;.
$$ 
With these assumptions, the ASD condition for the connection $\widetilde{\nabla}$ is equivalent to a system of $q$-difference equations. These are quadratic in $A$ and $B$ and linear in their $q$-derivatives (as defined below in \eqref{q-der}). 
When $q=1$, the quadratic terms disappear and $q$-derivatives become ordinary derivatives. If $q\neq 1$,
the presence of $q$-derivatives and of quadratic terms makes the problem very difficult to solve. 
To overcome these difficulties, in the next section we follow a different approach: we will construct
instantons more in the spirit of the ADHM construction.  
We will prove that the connection is of the form 
\eqref{eq:ABPhi} (cf.~Prop.~\ref{eq:prop}) with matrix elements that turn out to be of the kind of the ansatz above.

\medskip
The link between self-duality and $q$-difference equations is in the following lemmas,
that we collect here since they will also be needed later on.

\begin{lemma}
Let $\,x:=p_{11}=z_1^*z_1$. Then
\begin{subequations}\label{eq:lemma36}
\begin{gather}
(\de x)x=q^2x(\de x) \;,\qquad\quad
(\deb x)x=q^{-2}x(\deb x) \;, \label{eq:lemma36A} \\
\intertext{and, for any $n\geq 1$,}
\de x^n=[n]_q(qx)^{n-1}\de x \;,\qquad
\deb x^n=[n]_q(q^{-1}x)^{n-1}\deb x \;. \label{eq:lemma36B}
\end{gather}
\end{subequations}
\end{lemma}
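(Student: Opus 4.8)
The plan is to first establish the two commutation relations in \eqref{eq:lemma36A}, and then deduce \eqref{eq:lemma36B} by a straightforward induction using the graded Leibniz rule for $\de$ and $\deb$ on $0$-forms. For the commutation relations, I would use the explicit formula \eqref{eq:ddPij}, which gives $\de p_{11} = \ii q^{-1} (u^3_1)^* \binom{u^2_1}{u^1_1}$ and, since $x = p_{11} = z_1^* z_1 = (u^3_1)^* u^3_1$, I would compute both $(\de x)x$ and $x(\de x)$ in terms of the matrix elements $u^i_j$ and then compare them using the commutation relations among the $u^i_j$ listed in Section~2 (the relations $u^i_k u^j_k = q u^j_k u^i_k$ for $i<j$, $[u^i_l,u^j_k]=0$, and $[u^i_k,u^j_l]=(q-q^{-1})u^i_l u^j_k$ for $i<j$, $k<l$) together with the $*$-structure \eqref{eq:star}. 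Alternatively, and perhaps more cleanly, I would work directly in $\Sq$: since $x = z_1^* z_1$ and $z_1^* z_1 = z_1 z_1^*$ (from $[z_1^*,z_1]=0$ in \eqref{eq:sphere}), and using the sphere relations $z_i z_j = q z_j z_i$, $z_i^* z_j = q z_j z_i^*$ for $i \neq j$, one can push $x$ past the components of $\de x$. The key observation is that $\de x$ has components proportional to $u^2_1$ and $u^1_1$ (up to the factor $(u^3_1)^*$), and one checks from the algebra relations that conjugating $x$ past these picks up exactly a factor $q^2$; applying $*$ (recall $\deb x = (\de x^*)^* = (\de x)^*$ since $x=x^*$) then yields the $q^{-2}$ relation for $\deb x$.

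For the inductive step giving \eqref{eq:lemma36B}, I would argue as follows. For $n=1$ the statement is trivial since $[1]_q = 1$. Assuming $\de x^{n-1} = [n-1]_q (qx)^{n-2} \de x$, the graded Leibniz rule gives $\de x^n = \de(x \cdot x^{n-1}) = (\de x) x^{n-1} + x \, \de x^{n-1}$. Using the first relation in \eqref{eq:lemma36A} repeatedly, $(\de x) x^{n-1} = q^{2(n-1)} x^{n-1} (\de x)$; wait — more carefully, I need $(\de x)x^{n-1}$ re-expressed with $x^{n-1}$ on the left, but since everything is being collected with powers of $x$ on the left and $\de x$ on the right, it is cleaner to write $\de x^n = \de(x^{n-1}\cdot x) = (\de x^{n-1}) x + x^{n-1} \de x = [n-1]_q (qx)^{n-2}(\de x) x + x^{n-1}\de x = [n-1]_q q^{n-2} x^{n-2} \cdot q^2 x (\de x) + x^{n-1}(\de x) = \bigl([n-1]_q q^{n-1} + 1\bigr) x^{n-1}\de x$. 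Then one checks the numerical identity $[n-1]_q q^{n-1} + 1 = [n]_q$, which is immediate from $[n]_q = \frac{q^n - q^{-n}}{q-q^{-1}}$ and $[n-1]_q q^{n-1} = \frac{q^{n-1}-q^{-(n-1)}}{q-q^{-1}} q^{n-1} = \frac{q^{2n-2} - 1}{q - q^{-1}}\cdot\frac{1}{q^{?}}$ — so one arranges the bookkeeping to get $[n]_q q^{n-1}$, matching the target $[n]_q (qx)^{n-1}\de x = [n]_q q^{n-1} x^{n-1}\de x$. The $\deb$ case is identical with $q$ replaced by $q^{-1}$ throughout, using the second relations in \eqref{eq:lemma36A} and \eqref{eq:lemma36B}, and consistently with $\deb = *\circ\de\circ *$.

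The main obstacle is purely the first step: verifying the commutation relations \eqref{eq:lemma36A} requires a careful, if routine, manipulation of the relations in $\Oq$ (or $\Sq$) — one must be attentive to the order of factors and to the fact that $\de x$ is not a scalar multiple of $x$ but a one-form whose "coefficients" satisfy twisted commutation with $x$. Once \eqref{eq:lemma36A} is in hand, the rest is a one-line induction and an elementary $q$-number identity, so no further difficulty is expected.
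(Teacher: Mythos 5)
Your plan is correct and is essentially the paper's own proof: the paper likewise uses \eqref{eq:ddPij} to write $\de x=\ii q^{-1}z_1^*\binom{u^2_1}{u^1_1}$, checks from the $\Oq$ relations together with the $*$-structure \eqref{eq:star} that $u^i_1$ ($i=1,2$) $q$-commutes with both $z_1$ and $z_1^*$ (note your ``work directly in $\Sq$'' alternative needs this detour anyway, since $u^1_1,u^2_1\notin\Sq$), gets the $\deb$ relation by conjugation, and then derives \eqref{eq:lemma36B} from the Leibniz rule by summing the telescoped expansion $\sum_{k=0}^{n-1}x^{n-1-k}(\de x)x^k=\bigl(\sum_{k=0}^{n-1}q^{2k}\bigr)x^{n-1}\de x$ with $\sum_{k=0}^{n-1}q^{2k}=q^{n-1}[n]_q$, which is exactly your induction unrolled. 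The only fix needed is the bookkeeping in your inductive step: the coefficient is $[n-1]_q q^{n}+1$ (not $[n-1]_q q^{n-1}+1$, which does not equal $[n]_q$), and the elementary identity to verify is $[n-1]_q q^{n}+1=q^{n-1}[n]_q$, matching the target $[n]_q(qx)^{n-1}\de x=[n]_qq^{n-1}x^{n-1}\de x$.
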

\begin{proof}
As a particular case of \eqref{eq:ddPij}:
$$
\de x= \ii q^{-1} z_1^*\binom{u^2_1}{u^1_1} \;.
$$
>From the defining relations of $\Oq$ one gets $u^i_1z_1=u^i_1u^3_1=qu^i_1u^3_1=qz_1u^i_1$ for $i<3$.
Since $z_1^*=-q^{-2}(u^1_2u^2_3-qu^1_3u^2_2)$, one also checks that $u^i_1z_1^*=qu^i_1z_1^*$
for $i<3$. This proves the first equation in \eqref{eq:lemma36A}. The
second follows by conjugation.

Using \eqref{eq:lemma36A} and the Leibniz rule:
$$
\de x^n=\sum\nolimits_{k=0}^{n-1}x^{n-1-k}(\de x)x^k
=\left\{\sum\nolimits_{k=0}^{n-1}q^{2k}\right\}x^{n-1}(\de x)
$$
and $\sum\nolimits_{k=0}^{n-1}q^{2k}=(1-q^{2n})(1-q^2)^{-1}
=q^{n-1}[n]_q$, proving the first equation in \eqref{eq:lemma36B}.
One proves the second one in a similar fashion.
\end{proof}

As a corollary of previous lemma, we have the following interesting relation
between our differential calculus and the well known $q$-derivative.
The latter will be denoted simply with a `dot', instead of the more common
notation $D_q$.

\begin{lemma}\label{cor:qder}
Let the $q$-derivative be defined by
\begin{equation}\label{q-der}
\dot{f}(x):=\frac{f(qx)-f(q^{-1}x)}{(q-q^{-1})x} \;.
\end{equation}
Then
\begin{equation}\label{q-derB}
\de f(x)=\dot{f}(qx)\,\de x=\de x\,\dot{f}(q^{-1}x) \;, \qquad
\deb f(x)=\dot{f}(q^{-1}x)\,\deb x=\deb x\,\dot{f}(qx) \;.
\end{equation}
\end{lemma}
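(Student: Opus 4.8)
The plan is to reduce everything to the single-variable identities of the previous lemma for $x = p_{11}$, expand $f$ as a power series $f(x) = \sum_{n\geq 0} a_n x^n$, and match coefficients. Concretely, for a monomial $f(x) = x^n$ with $n\geq 1$, formula \eqref{eq:lemma36B} gives $\de x^n = [n]_q (qx)^{n-1} \de x$. On the other hand, the $q$-derivative of $x^n$ is $\dot{f}(x) = \frac{(qx)^n - (q^{-1}x)^n}{(q-q^{-1})x} = [n]_q x^{n-1}$, so $\dot f(qx) = [n]_q (qx)^{n-1}$ and indeed $\de x^n = \dot f(qx)\,\de x$. For the second form of the identity, I would instead push $\de x$ to the left: using the first relation in \eqref{eq:lemma36A} one has $x^k (\de x) = q^{-2k}(\de x) x^k$, so $\de x^n = \big(\sum_{k=0}^{n-1} q^{-2k}\big)(\de x) x^{n-1} = q^{-(n-1)}[n]_q (\de x) x^{n-1} = (\de x)\,[n]_q(q^{-1}x)^{n-1} = (\de x)\,\dot f(q^{-1}x)$. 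The constant term $n=0$ is trivial since $\de 1 = 0$ and $\dot f = 0$.

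The general case then follows by linearity: writing $f(x) = \sum_n a_n x^n$, both sides of each claimed identity are $\C$-linear in $f$, and I have just verified them on the spanning set $\{x^n\}_{n\geq 0}$; since $\de$ and $\deb$ are assumed to be applied to such ``functions of $p_{11}$'' (which by definition lie in the closure of the span of powers of $x$, or are treated formally as power series in $x$), the identities extend. The antiholomorphic identities are obtained verbatim from the holomorphic ones by complex conjugation, using Remark~\ref{rem:2.1} ($\deb a = (\de a^*)^*$), the fact that $x = p_{11} = z_1^* z_1$ is self-adjoint so $x^* = x$ and $f(x)^* = \bar f(x)$ for a polynomial with the conjugated coefficients, and the second relations in \eqref{eq:lemma36A} and \eqref{eq:lemma36B}; alternatively one repeats the two-line computation above with $q$ replaced by $q^{-1}$ throughout, which is exactly what \eqref{eq:lemma36A}--\eqref{eq:lemma36B} provide for $\deb$.

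I do not expect a genuine obstacle here: the lemma is essentially a bookkeeping corollary of the preceding one, and the only mild subtlety is making precise in what sense ``$f(x)$'' and ``$\dot f$'' are defined — i.e.\ whether one works with polynomials, with a functional calculus, or with formal power series — and checking that the rearrangement of $\de x$ past powers of $x$ (which introduces the factors $q^{\pm 2k}$) telescopes to give the shift $x \mapsto q^{\mp 1}x$ inside $\dot f$. Once that is granted, the proof is the short monomial computation above together with linearity and conjugation.
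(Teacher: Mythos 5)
Your proposal is correct and matches the paper's intent: the paper states this lemma as an immediate corollary of \eqref{eq:lemma36} (no separate proof is given), and your monomial computation using \eqref{eq:lemma36B} together with the commutation rule \eqref{eq:lemma36A}, extended by linearity to polynomials and then to the convergent series of powers of $p_{11}$ discussed right after the lemma, is exactly the intended argument. The handling of the $\deb$ identities, either by conjugation via $\deb a=(\de a^*)^*$ or by repeating the computation with the second relations of \eqref{eq:lemma36A}--\eqref{eq:lemma36B}, is also fine.
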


\noindent%
Previous lemma is true for $f(x)$ a polynomial of $x$, but more generally holds for any $f$ for which the $q$-derivative exists.

In fact, in the rest of this section, we need to consider a (slightly) enlarged algebra than $\Aq$.
>From the spherical relation \eqref{eq:sphere}, we deduce that
$||p_{11}||\leq ||z_1||^2\leq 1$ in any bounded $*$-representation; thus any sum $f:=\sum_{n\geq 0}c_n(p_{11})^n$ with rapid decay
coefficients $\{c_n\}$ is convergent in the universal $C^*$-algebra
$C(\CP^2_q)$ generated by the $\{p_{ij}\}$.
If $C(\SU_q(3))$ is the universal $C^*$-algebra generated by the $\{u^i_j\}$,
we can set
 $\de f:=\sum_{n\geq 0}c_n\de(p_{11})^n$ and
$\deb f:=\sum_{n\geq 0}c_n\deb(p_{11})^n$: these sums converge to some
elements of $C(\SU_q(3))^2$ that we take by definition as derivatives of $f$,
and satisfy \eqref{q-derB}.
In particular we shall need the element
\begin{equation}\label{eq:fx}
(1-t^2q^kp_{11})^{-1}=\sum\nolimits_{n\geq 0}(t^2q^kp_{11})^n \;,
\end{equation}
which is a well defined positive operator for any $0\leq t<1$
and $k\geq 0$, as well as its positive square root. We also recall that the coproduct of $\A(\SU_q(3))$ extends to
a $C^*$-algebra morphism, needed later on when constructing additional solutions out of \eqref{t-con}.

As a last remark, we note that if $A\subset B$ are $*$-algebras
and $M_A$ is a right $A$-module, this can be canonicaly turned into a right
$B$-module $M_B$ with the formula $M_B:=M_A\otimes_AB$. With a slight
abuse of notations, we will not introduce new symbols for the algebras
and modules enlarged with the square root of the element \eqref{eq:fx}
and its positive powers.

\medskip
 
We are ready to construct a family of one-instantons $\widetilde{\nabla}_t$ parametrized by $0\leq t<1$. Motivated by the classical case, discussed in 
 Sec.~\ref{sec:AppA}, we look for a projection $\widetilde{\Psi}\widetilde{\Psi}^\dag$ which is a deformation of
 the classical matrix in Eq.~\eqref{eq:decon}. 
Let $0\leq\theta<\pi/4$, $t:=\sin 2\theta$ and define
\begin{equation}\label{pf}
\psi:=
\begin{pmatrix}
\cos 2\theta\,z_1^* \\
\cos\theta\,z_2^* \\
\cos\theta\,z_3^* \\
0 \\
\sin\theta\,z_3^* \\
\sin\theta\,z_2^*
\end{pmatrix}
\;,\qquad
\varphi:=
\begin{pmatrix}
0 \\
-\sin\theta\,z_3 \\
q\sin\theta\, z_2 \\
q^2\cos 2\theta\, z_1 \\
q\cos\theta\, z_2 \\
-\cos\theta\,z_3
\end{pmatrix} \;.
\end{equation}
Entries of $\psi$ are easily seen to be a generating family for $L_1$ while entries of $\varphi$ are a generating family for 
$L_{-1}$. The $q$ factors are inserted so that  
\begin{equation}\label{eq:psiphi}
\psi^\dag\varphi=0 \;,\qquad
\psi^\dag\psi=1-t^2p_{11} \;,\qquad
\varphi^\dag\varphi=1-t^2q^4p_{11} \;;
\end{equation}
 then, if we form the $2\times 6$  matrix
\begin{equation}\label{eq:Psitilde}
\widetilde{\Psi}=
(\psi , 0)\frac{1}{\sqrt{1-t^2p_{11}}}
+
(0,\varphi)
\frac{1}{\sqrt{1-t^2q^4p_{11}}} \;,
\end{equation}
we have $\widetilde{\Psi}^\dag\widetilde{\Psi}=1_2$ and $P:=\widetilde{\Psi}\widetilde{\Psi}^\dag$
is a projection and we can consider the corresponding Grassmannian connection. One of the main results of present 
paper is the following:

\begin{thm}\label{thm} For any $0\leq t<1$, the connection on $L_1\oplus L_{-1}$ defined by
\begin{equation}\label{t-con}
\widetilde{\nabla}_t\eta:=\widetilde{\Psi}^\dag\dd(\widetilde{\Psi}\eta) \;, 
\end{equation}
has ASD curvature 
\begin{equation}\label{t-cur}
\F=\widetilde{\Psi}^\dag(\dd P \wprod \dd P)\widetilde{\Psi} \;.
\end{equation}
\end{thm}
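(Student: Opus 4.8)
The plan is to show directly that the curvature two-form $\F$ in \eqref{t-cur} lies in the sum $\Omega^{2,0}\oplus\Omega^{1,1}_s\oplus\Omega^{0,2}$ of eigenspaces of $\Hs$ for the eigenvalue $\lambda=-1$, which by Proposition~\ref{lemma:4.3} (with $c_0c_3>0$) is precisely the ASD condition for the orientation $\overline{\CP}{}^2_q$. Equivalently, using the decomposition \eqref{eq:dec2forms}, I must show that the $\Omega^{1,1}_v$ component of each matrix entry of $\F$ vanishes, i.e. the $(1,1)$ part of the curvature is proportional to the K{\"a}hler form $\qkahler$. Since $\widetilde{\Psi}^\dag\widetilde{\Psi}=1_2$, one has the standard identity $\F=\widetilde{\Psi}^\dag(\dd P\wprod\dd P)\widetilde{\Psi}=\dd(\widetilde{\Psi}^\dag\dd\widetilde{\Psi})+(\widetilde{\Psi}^\dag\dd\widetilde{\Psi})\wprod(\widetilde{\Psi}^\dag\dd\widetilde{\Psi})$, so the first step is to compute the connection one-form $\omega_t:=\widetilde{\Psi}^\dag\dd\widetilde{\Psi}$ explicitly as a $2\times 2$ matrix.

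First I would expand $\omega_t$ using \eqref{eq:Psitilde} and the relations \eqref{eq:psiphi}. Writing $f:=(1-t^2p_{11})^{-1/2}$ and $g:=(1-t^2q^4p_{11})^{-1/2}$, the diagonal entries come out as $f(\psi^\dag\dd\psi)f+(\dd f)f^{-1}$ type expressions and $g(\varphi^\dag\dd\varphi)g+(\dd g)g^{-1}$, while the off-diagonal entry is $f(\psi^\dag\dd\varphi)g$ (the term $f\,\psi^\dag\psi\,\dd(g)\cdot 0$ etc. being killed by $\psi^\dag\varphi=0$). Using the explicit forms \eqref{pf} together with $\de p_{ij}=\ii q^{-1}(u^3_i)^*\binom{u^2_j}{u^1_j}$ and the orthogonality relations of Lemma~\ref{lemma:ortho}, I would show that $\psi^\dag\psi$-type combinations of the $z_i$ reduce to functions of $p_{11}$ alone, so that the diagonal entries are exactly $A^*-A$ and $B-B^*$ with $A,B$ as in the Introduction, and that $\psi^\dag\dd\varphi$ reproduces (a multiple of) $\phi$ from \eqref{eq:phi}, giving the off-diagonal $\Phi=C\phi$. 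This identifies $\omega_t=\omega_t^*$-antihermitian of the form \eqref{eq:ABPhi} and en route proves that $\widetilde{\nabla}_t$ is Hermitian; it also shows that $A,B$ depend only on $x=p_{11}$, so Lemma~\ref{cor:qder} applies.

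Next I would compute $\F=\dd\omega_t+\omega_t\wprod\omega_t$ entrywise. For the diagonal $(1,1)$ entry, $\dd(A^*-A)+(A^*-A)\wprod(A^*-A)-\Phi\wprod\Phi^*$: the first two terms are built from $\dd x$, $\de x$, $\deb x$ via \eqref{eq:lemma36} and \eqref{q-derB}, so their $(1,1)$ part is automatically a function of $x$ times $\de x\wprod\deb x$ (plus $\deb x\wprod\de x$), and by \eqref{eq:useful}, \eqref{eq:tempUse} and \eqref{eq:similarly} these combine into a multiple of $\qkahler$; the term $\Phi\wprod\Phi^*$ must be handled by reducing $\phi\wprod\phi^*$ using Lemma~\ref{lemma:ortho} and \eqref{eq:useful} to, again, a function of $x$ times $\qkahler$ — so the whole $(1,1)$ part is proportional to $\qkahler$, hence lies in $\Omega^{1,1}_s$ with zero $\Omega^{1,1}_v$ component. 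The pure $(2,0)$ and $(0,2)$ parts are in $\Omega^{2,0}\oplus\Omega^{0,2}$ trivially, and since $\de x\wprod\de x=\deb x\wprod\deb x=0$ (again \eqref{eq:similarly}) the $A^2$-type contributions there vanish, leaving only contributions from $\dd\Phi$ and $\Phi\wprod\Phi^*$ which one checks land in the right spaces. The off-diagonal entry $\dd\Phi+(A^*-A)\wprod\Phi+\Phi\wprod(B-B^*)$ is $\Omega^1(L_2)$-valued and its $(1,1)$ piece — a priori in $\Omega^{1,1}_v(L_2)\oplus\Omega^{1,1}_s(L_2)$ — must be shown to have vanishing $v$-component; this is where I expect the real work to be, because it amounts to a genuine $q$-identity among $\de\deb$ of $\phi$ and the logarithmic-derivative pieces of $A$ and $B$, and it is precisely the step that forces the specific $q$-powers $q$, $q^{-1}$, $q^4$ appearing in the definitions of $A,B,\Phi$ and in $\widetilde{\Psi}$.

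The main obstacle, then, is the off-diagonal $(1,1)$-component computation: verifying that $\deb\Phi^{(1,0)}+\de\Phi^{(0,1)}$ together with the cross terms $(A^*-A)\wprod\Phi+\Phi\wprod(B-B^*)$ has no component in $\Omega^{1,1}_v(L_2)$. I would attack it by writing everything in terms of the matrix elements $u^i_j$ via \eqref{eq:ddPij}, pulling the functions of $x=p_{11}$ through using \eqref{eq:lemma36A}, and then using the four orthogonality relations of Lemma~\ref{lemma:ortho} to collapse the resulting sums; the $q$-difference identities of Lemma~\ref{cor:qder} relate the derivative terms $(\dd f)f^{-1}$, $(\dd g)g^{-1}$ to $\dot f$, $\dot g$ so that the cancellation becomes an algebraic identity in $f,g$ and their $q$-shifts, which holds identically for the given $t$-dependent $f,g$. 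Once all three independent matrix entries are shown to lie in $\Omega^{2,0}\oplus\Omega^{1,1}_s\oplus\Omega^{0,2}$ (with coefficients in the appropriate line bundles), Proposition~\ref{lemma:4.3} immediately gives $\Hs\F=-\F$, i.e.\ $\F$ is ASD on $\overline{\CP}{}^2_q$, completing the proof.
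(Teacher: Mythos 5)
Your opening step already fails in the $q$-deformed setting. The ``standard identity''
$\F=\dd(\widetilde{\Psi}^\dag\dd\widetilde{\Psi})+(\widetilde{\Psi}^\dag\dd\widetilde{\Psi})\wprod(\widetilde{\Psi}^\dag\dd\widetilde{\Psi})$
relies on the ordinary graded Leibniz rule and on $\dd^2=0$ applied to the \emph{entries} of $\widetilde{\Psi}$. But those entries live in (an extension of) $\Sq\subset\Oq$, not in $\Aq$, and on $\Oq$ the maps $\de,\deb$ are only twisted derivations, cf.~\eqref{glr}, with $\dd^2\neq 0$ --- this is stated explicitly at the start of Appendix~\ref{app:B}. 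The usable replacement is \eqref{lemma:B1}: since $z_i\za K_2=q^{1/2}z_i$, one gets $\dd P=(\dd\widetilde{\Psi})Q\widetilde{\Psi}^\dag+\widetilde{\Psi}Q(\dd\widetilde{\Psi}^\dag)$ and hence $\F=\bigl\{(\widetilde{\Psi}^\dag\dd\widetilde{\Psi})Q\bigr\}^2+Q(\dd\widetilde{\Psi}^\dag)\wprod(\dd\widetilde{\Psi})Q$ with $Q=\mathrm{diag}(q^{1/2},q^{-1/2})$ as in \eqref{eq:Q}; there is no $\dd(\widetilde{\Psi}^\dag\dd\widetilde{\Psi})$ term. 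The powers of $q$ produced by $Q$ and by the twisted Leibniz rule are not cosmetic: they are exactly the coefficients entering the final cancellations, so an argument based on the untwisted identity cannot be patched afterwards --- it computes a different $2$-form. (Relatedly, your $\omega_t:=\widetilde{\Psi}^\dag\dd\widetilde{\Psi}$ is not the connection one-form of the paper, which is defined relative to $\widetilde{\nabla}_0$ as in \eqref{c1f} and Proposition~\ref{eq:prop}; identifying diagonal entries with $A^*-A$, $B-B^*$ again requires the $K_2$-twists.)

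The second gap is the claim that the diagonal $(1,1)$ entries are ``automatically'' multiples of $\qkahler$ because they are built from $\de x\wprod\deb x$ and $\phi\wprod\phi^*$. This is false: \eqref{eq:useful} holds only for the full sum $\sum_l\de p_{jl}\wprod\deb p_{lk}$, not for the single term with $x=p_{11}$. Indeed, by Lemmas~\ref{lemma:eta} and~\ref{lemma:B16}, each of $(\de x\wprod\deb x)^+$, $(\phi\wprod\phi^*)^+$ and $(\de\psi^\dag\wprod\deb\psi)^+$ is a \emph{nonzero} multiple of one and the same self-dual form $\eta$ (resp.\ $\eta'$); anti-selfduality of $F_{11}$ and $F_{22}$ is precisely the statement that the resulting scalar coefficients vanish, which is a nontrivial $q$-difference identity in $f(x)$, $f(q^2x)$, $\dot{f}(qx)$ (Corollaries~\ref{cor:6} and~\ref{cor:22+}) and holds only with the $q$-powers coming from the twisted calculus. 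Conversely, the off-diagonal entry, which you single out as the hard part, vanishes identically by a short commutation argument using Lemma~\ref{lemma:B7} and \eqref{eq:lemma36} (Lemma~\ref{lem:12}). So the real work sits on the diagonal, and your proposal as written neither sets up the correct curvature formula nor identifies the cancellation that must be verified there.
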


\noindent
Note that $\widetilde{\Psi}|_{t=0}=\Psi_1\oplus\Psi_{-1}$ and $\widetilde{\nabla}_0:=\nabla_1\oplus\nabla_{-1}$ is
the reducible connection in \eqref{eq:1on1}.

\begin{proof}
With our choice of orientation, $(2,0)$ and $(0,2)$ forms are ASD; thus we only need to compute the $(1,1)$ component of the curvature $F$ and check that it is proportional to the K{\"a}hler form. Let $\F_{ij}$ be the matrix elements of the $\Omega^{1,1}$-component of $\F$, and $\F_{ij}^+$ the self-dual part. Since $\F_{21}=\F_{12}^*$, we only need to compute three components.

The details of the meticulously crafted, very long and very technical proof are relegated to Appendix~\ref{app:B}.
One finds indeed $\F_{12}=0$; this is shown in Lemma~\ref{lem:12}. That $\F_{11}^+=0$ and $\F_{22}^+=0$ is proved in Corollary~\ref{cor:6} and Corollary~\ref{cor:22+} respectively.
\end{proof}

In the next section we shall obtain new instantons from the connection \eqref{t-con} by using the coaction of $\Oq$.
For this, we need the connection one-form of \eqref{t-con} that we work out in the form \eqref{eq:ABPhi}. Its entries
turn out to be just like the ones in the anstaz mentioned before, that is $\Phi=C\phi$, with $\phi$ as in in \eqref{eq:phi}
and $A,B,C$ functions of $x=p_{11}$.

Once more, the space of connections being affine, one can decompose the connection as
\begin{equation}\label{c1f}
\widetilde{\nabla}_t=\widetilde{\nabla}_0+\omega_t
\end{equation}
with $\omega_t$ an endomorphism of $L_1\oplus L_{-1}$ with coefficients in $\Omega^1$.
Indeed, from Remark~\ref{rem:ac} $\widetilde{\nabla}_t$ is of the form
\eqref{eq:ABPhi}, for some $A,B,\Phi$ which we compute in this section.

\begin{prop}\label{eq:prop}
The matrix of 1-forms $\omega_t$ in the connection \eqref{c1f} is as in \eqref{eq:ABPhi} with
\begin{subequations}
\begin{align}
A^* &=q\sqrt{1-t^2x}\;\de
\frac{1}{\sqrt{1-t^2x}}
\;,\label{eq:prop1}\\
B &=q^{-1}\sqrt{1-t^2q^4x}\;\deb
\frac{1}{\sqrt{1-t^2q^4x}}
\;,\label{eq:prop2}\\
\Phi &=q^{-\frac{1}{2}}t\,
\frac{1}{\sqrt{1-t^2x}}\,\phi
\,\frac{1}{\sqrt{1-t^2q^4x}}
\;,\label{eq:prop3}
\end{align}
\end{subequations}
where $\phi=q^{\frac{1}{2}}\sum\nolimits_j\bigl\{z_j(\de p_{j2})z_3-qz_j(\de p_{j3})z_2\bigr\}$ as given in \eqref{eq:phi} and $x:=p_{11}$ as before.
\end{prop}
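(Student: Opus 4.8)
The plan is to compute directly the one-form $\omega_t=\widetilde{\nabla}_t-\widetilde{\nabla}_0$ by unwinding the definitions \eqref{t-con} and \eqref{eq:1on1}, exploiting the block structure $\widetilde{\Psi}=(\psi,0)f_1^{-1/2}+(0,\varphi)f_2^{-1/2}$ with $f_1:=1-t^2x$, $f_2:=1-t^2q^4x$, $x=p_{11}$. First I would write $\widetilde{\Psi}\eta$ for $\eta=(\eta_1,\eta_2)^t\in L_1\oplus L_{-1}$ as $\psi f_1^{-1/2}\eta_1+\varphi f_2^{-1/2}\eta_2$, apply $\dd$ via the Leibniz rule, and then apply $\widetilde{\Psi}^\dag$ on the left. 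Using $\psi^\dag\psi=f_1$, $\varphi^\dag\varphi=f_2$, $\psi^\dag\varphi=0=\varphi^\dag\psi$ from \eqref{eq:psiphi}, together with the already-established identities $\psi^\dag\dd\psi$-type formulas and the fact (to be extracted, as in \eqref{eq:follows}) that $\psi^\dag(\psi\za E_2)=0$ etc., the ``diagonal'' pieces will collapse to $\Psi_1^\dag\dd(\Psi_1\,\cdot\,)$ and $\Psi_{-1}^\dag\dd(\Psi_{-1}\,\cdot\,)$ plus correction terms coming from $\dd f_j^{-1/2}$, while the ``off-diagonal'' pieces give $\Phi$ and $-\Phi^*$. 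The bookkeeping is: $\omega_t=\begin{pmatrix} X_{11}&X_{12}\\ X_{21}&X_{22}\end{pmatrix}$ with $X_{11}=f_1^{-1/2}\psi^\dag\dd(\psi f_1^{-1/2})-\Psi_1^\dag\dd\Psi_1$ (acting on $\eta_1$), and similarly for $X_{22}$, $X_{12}=f_1^{-1/2}\psi^\dag\dd(\varphi f_2^{-1/2})$, $X_{21}=f_2^{-1/2}\varphi^\dag\dd(\psi f_1^{-1/2})$.

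Next I would evaluate the diagonal corrections. Since $\psi$ differs from $\Psi_1$ only by scalar trigonometric factors and an extra vanishing-under-$\psi^\dag$ structure, one shows $\psi^\dag\dd(\psi g)=f_1\,\dd g + (\psi^\dag\dd\psi)g$ for $g$ a function of $x$, and the term $\psi^\dag\dd\psi$ turns out to contribute only through its $(1,0)$-part (the $(0,1)$-part annihilated by the structure, or cancelling against $\Psi_1^\dag\dd\Psi_1$). This is where Remark~\ref{rem:ac} is used: $\omega_t$ must be antihermitian, so $X_{11}=A^*-A$ with $A\in\Omega^{0,1}$, forcing $X_{11}$ to have holomorphic part $A^*=f_1^{1/2}\de(f_1^{-1/2})$ up to the normalization $q$, and antiholomorphic part $-A=-(A^*)^*=-(f_1^{1/2}\de f_1^{-1/2})^*$; here one invokes $\deb g=(\de g^*)^*$ and the reality of $x$. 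The factor $q$ in \eqref{eq:prop1} comes from $\Psi_1\za K_2=q^{-1/2}\Psi_1$ exactly as in the proof of \eqref{eq:ddPsi}, i.e.\ the same computation that produced $q^N$ in \eqref{eq:leftLeib} with $N=1$; similarly $q^{-1}$ for $B$ comes from $N=-1$. For $X_{22}$ one repeats this with $\varphi$, $\Psi_{-1}$, $f_2$, and obtains $B-B^*$ with $B=q^{-1}f_2^{1/2}\deb f_2^{-1/2}$.

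For the off-diagonal entry I would compute $\psi^\dag\dd(\varphi f_2^{-1/2})=(\psi^\dag\dd\varphi)f_2^{-1/2}+\psi^\dag\varphi\,\dd f_2^{-1/2}=(\psi^\dag\dd\varphi)f_2^{-1/2}$ using $\psi^\dag\varphi=0$, so $X_{12}=f_1^{-1/2}(\psi^\dag\dd\varphi)f_2^{-1/2}$. It remains to identify $\psi^\dag\dd\varphi$ with $q^{-1/2}t\,\phi$. Writing out $\psi^\dag\dd\varphi=\sum_j\psi_j^*\dd\varphi_j$ from \eqref{pf} and using $z_j^*\dd z_k=q\cdot(\text{shift})$ relations together with the explicit formula \eqref{eq:ddPij} for $\de p_{jk}$ (and that the $(0,1)$-part vanishes because $\sum_j z_j\deb p_{jk}=0$-type identities, hence $\Phi\in\Omega^{1,0}$, consistent with $\Phi\in\Omega^1(L_2)$), one matches the trigonometric prefactors: the two nonzero contributions carry $\cos2\theta\sin\theta$ and $\cos\theta$-type factors whose combination is proportional to $\sin2\theta\cos2\theta=t\cos2\theta$... more precisely one gets a single overall $\sin2\theta\cdot(\text{const})=t\cdot q^{-1/2}$ after using $\sin2\theta=t$ and collecting. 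The comparison of $\sum_j z_j(\de p_{j2})z_3-qz_j(\de p_{j3})z_2$ with $\sum_j \psi_j^*\de\varphi_j$ is the combinatorial heart; the antihermiticity check $X_{21}=-X_{12}^*$ then follows formally from $(\psi^\dag\dd\varphi)^*=\varphi^\dag\dd\psi$ (up to sign from the grading, since these are one-forms) together with $f_j^*=f_j$, or can be verified directly.

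The main obstacle I anticipate is the identification $\psi^\dag\dd\varphi = q^{-1/2}t\,\phi$: it requires carefully tracking which terms in $\sum_j \psi_j^*\dd\varphi_j$ are $(1,0)$ versus $(0,1)$, applying the commutation relations of the $z_i$ and the explicit differential \eqref{eq:ddPij}, and then correctly assembling the scalar trigonometric coefficients into the single factor $q^{-1/2}t$. A secondary subtlety is justifying the interchange of $\dd$ with the convergent power series $f_j^{-1/2}=\sum_n\binom{-1/2}{n}(-t^2q^kx)^n$ and the use of Lemma~\ref{cor:qder} to rewrite $\de f_j^{-1/2}$, but this is covered by the discussion preceding \eqref{eq:fx}. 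Everything else is the systematic block computation outlined above, combined with the antihermiticity constraint of Remark~\ref{rem:ac} to pin down the holomorphic/antiholomorphic splitting and hence the precise form \eqref{eq:prop1}--\eqref{eq:prop3}.
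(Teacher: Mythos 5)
Your overall route is essentially the paper's: expand $\widetilde{\Psi}^\dag\dd(\widetilde{\Psi}\eta)$ blockwise, use $\psi^\dag\varphi=0$ and the normalizations \eqref{eq:psiphi}, obtain the diagonal entries with the $q^{\pm1}$ factors coming from the $K_2$-twist (as in \eqref{eq:ddPsi}, \eqref{eq:leftLeib}), and read off $A^*$, $B$ from the $(1,0)$/$(0,1)$ parts using antihermiticity (Remark~\ref{rem:ac}). The genuine gap is in the off-diagonal entry, and it is precisely a $q$-factor of the kind this calculus does not forgive. The step ``$\psi^\dag\dd(\varphi f_2^{-1/2})=(\psi^\dag\dd\varphi)f_2^{-1/2}+\psi^\dag\varphi\,\dd f_2^{-1/2}$'' uses the undeformed Leibniz rule, which fails on $\Oq$: by \eqref{glr}, for $g$ in (the enlargement of) $L_{-1}$ one has $\dd(\varphi\,g)=(\dd\varphi)(g\za K_2)+(\varphi\za K_2^{-1})\,\dd g=q^{-\frac12}\bigl\{(\dd\varphi)\,g+\varphi\,\dd g\bigr\}$, since $\varphi\za K_2^{-1}=q^{-\frac12}\varphi$ and $g\za K_2=q^{-\frac12}g$; only the first term survives under $\psi^\dag$, and it carries the factor $q^{-\frac12}$. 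Correspondingly, the scalar identity you set out to prove, $\psi^\dag\dd\varphi=q^{-\frac12}t\,\phi$, is false: the correct statement is $\psi^\dag\dd\varphi=\psi^\dag\de\varphi=t\,\phi$ (this is exactly Lemma~\ref{lemma:1} of the appendix, with the trigonometric factors combining as $2\sin\theta\cos\theta=\sin2\theta=t$, no extra $q$-power). If you carry out the direct computation you describe you will therefore find $t\phi$, and with your untwisted Leibniz step you would land on $\Phi=t(1-t^2x)^{-\frac12}\phi\,(1-t^2q^4x)^{-\frac12}$, missing the $q^{-\frac12}$ of \eqref{eq:prop3}. The factor comes from the twist, not from $\psi^\dag\dd\varphi$; you did account for the analogous twist in the diagonal blocks, which is why those come out right, but the off-diagonal block needs the same care (in the paper this bookkeeping is done via Lemma~\ref{lemma:3.1} and the rewriting through $-\psi^\dag\dd(\psi\psi^\dag)\varphi$).

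A secondary caution: the ``formal'' identity $(\psi^\dag\dd\varphi)^*=\varphi^\dag\dd\psi$ that you invoke for $X_{21}=-X_{12}^*$ is also not reliable on $\Oq$, where $\de,\deb$ are only twisted derivations --- compare Lemma~\ref{lemma:1}, where $\psi^\dag\de\varphi=t\phi$ while $\varphi^\dag\deb\psi=tq^{-1}\phi^*$, so conjugation produces nontrivial $q$-powers. This does not endanger the result, because antihermiticity of $\omega_t$ is already guaranteed by Remark~\ref{rem:ac} ($\widetilde{\nabla}_t$ and $\widetilde{\nabla}_0$ are Grassmannian connections of self-adjoint projections), so once $\Phi$ is computed the $(2,1)$ entry is $-\Phi^*$ for free; but you should lean on that argument rather than on the naive conjugation identity. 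Apart from these $q$-bookkeeping points, your computation coincides with the paper's, the only organizational difference being that you reach the off-diagonal entry directly through $\psi^\dag\dd\varphi$ rather than through $\psi^\dag\dd(\psi\psi^\dag)\varphi$, which is a legitimate (and slightly shorter) variant once the twisted Leibniz rule is used correctly.
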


We need some preliminary results.
\begin{lemma}\label{prop:3.4}
If $\phi$ is the element in \eqref{eq:phi}, then
\begin{equation}\label{eq:phip33}
\phi= \ii \dbinom{q^{-\frac{1}{2}}(u^1_1)^*}{-q^{\frac{1}{2}}(u^2_1)^*} \;,\quad\qquad
\phi^*= \ii \dbinom{u^2_1}{u^1_1} \;.
\end{equation}
\end{lemma}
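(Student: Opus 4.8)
\textbf{Proof plan for Lemma~\ref{prop:3.4}.} The plan is to compute $\phi$ directly from its definition \eqref{eq:phi}, substituting the explicit formula \eqref{eq:ddPij} for $\de p_{jk}$, and then to simplify the resulting sum over $j$ using the orthogonality relations of Lemma~\ref{lemma:ortho}. The second identity for $\phi^*$ will then follow either by the same kind of computation starting from the $*$-structure on forms, or more cheaply by applying the involution to the first identity and using $J$ on $V^{1,0}$ together with the relation $(u^i_j)^{**}=u^i_j$.

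First I would insert $\de p_{j2}=\ii q^{-1}(u^3_j)^*\binom{u^2_2}{u^1_2}$ and $\de p_{j3}=\ii q^{-1}(u^3_j)^*\binom{u^2_3}{u^1_3}$ into \eqref{eq:phi}. Since $z_j=u^3_j$ and the coefficients $z_j(u^3_j)^*$ collect into $\sum_j u^3_j(u^3_j)^*$, the first orthogonality relation of Lemma~\ref{lemma:ortho} (with $a=b=3$) gives $\sum_j u^3_j(u^3_j)^*=1$. Thus
\begin{align*}
\phi &= q^{\frac12}\sum_j\Bigl\{z_j\,\ii q^{-1}(u^3_j)^*\tbinom{u^2_2}{u^1_2}z_3-q\,z_j\,\ii q^{-1}(u^3_j)^*\tbinom{u^2_3}{u^1_3}z_2\Bigr\} \\
&= \ii q^{-\frac12}\Bigl\{\tbinom{u^2_2}{u^1_2}z_3-q\tbinom{u^2_3}{u^1_3}z_2\Bigr\},
\end{align*}
after moving the scalars $z_3=u^3_3$ and $z_2=u^3_2$ to the right past the $z_j(u^3_j)^*$ using the commutation relations of $\Oq$ (care is needed here: the $u$'s do not commute, so one must use $u^i_ku^j_k=qu^j_ku^i_k$ and the mixed relations to bring $u^3_3$, $u^3_2$ to the right; this produces exactly the $q$-factors written in \eqref{eq:phi}, which is precisely why they were put there). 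The remaining step is to recognize $q^{-\frac12}(u^2_2u^3_3-qu^2_3u^3_2)$ and $q^{\frac12}(u^1_2u^3_3-qu^1_3u^3_2)$ as entries of $(u^i_1)^*$ via the $*$-structure formula \eqref{eq:star}: taking $i=1$, $j=1$ there gives $(u^1_1)^*=u^2_2u^3_3-qu^2_3u^3_2$, and taking $i=2$, $j=1$ gives $(u^2_1)^*=-(u^1_2u^3_3-qu^1_3u^3_2)$. Matching signs and powers of $q$ yields the claimed $\phi=\ii\,(q^{-\frac12}(u^1_1)^*,\,-q^{\frac12}(u^2_1)^*)^t$.

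For $\phi^*$ I would apply the graded involution, $\phi^*=(-1)^1 J(\phi)$ with $\phi\in\Omega^{1,0}$ (so $J:V_{\frac12,-\frac32}\to V_{\frac12,\frac32}$ acts by $J(v_1,v_2)^t=(-q^{-\frac12}v_2^*,q^{\frac12}v_1^*)^t$), giving $\phi^*=-J\bigl(\ii(q^{-\frac12}(u^1_1)^*,-q^{\frac12}(u^2_1)^*)^t\bigr)=-\ii\bigl(-q^{-\frac12}(-q^{\frac12}u^2_1)^{**\,*},\,q^{\frac12}(q^{-\frac12}(u^1_1)^*)^{*}\bigr)^t$; simplifying the conjugates and the $q$-powers collapses this to $\ii(u^2_1,u^1_1)^t$, as claimed. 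The main obstacle is purely bookkeeping: keeping track of the $q$-powers when commuting the homogeneous coordinates $z_2,z_3$ to the right in the first computation, and keeping track of signs and double-conjugations in the involution computation. There is no conceptual difficulty; Lemma~\ref{lemma:ortho} and the $*$-structure \eqref{eq:star} do all the real work.
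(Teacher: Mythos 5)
Your route is essentially the paper's: insert \eqref{eq:ddPij} into the definition \eqref{eq:phi}, collapse the sum over $j$, and read off the two entries via the $*$-structure \eqref{eq:star}, obtaining $\phi^*$ afterwards by conjugation. (The paper collapses the sum using the sphere relation \eqref{eq:sphere}, $\sum_j z_jz_j^*=1$, which is exactly your orthogonality relation from Lemma~\ref{lemma:ortho} with $a=b=3$.) Two points should be tightened. First, no commutation of $z_2,z_3$ is needed at all: in $z_j(\de p_{j2})z_3$ the $j$-independent factors already sit to the right of $z_j(u^3_j)^*$, so the sum collapses immediately; the $q$-factors in your displayed intermediate formula are simply those already present in \eqref{eq:phi}, not produced by any reordering. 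Second, and more substantively, your evaluation of \eqref{eq:star} at $(i,j)=(2,1)$ drops a factor of $q$: the prefactor is $(-q)^{j-i}=(-q)^{-1}=-q^{-1}$, so $(u^2_1)^*=-q^{-1}(u^1_2u^3_3-qu^1_3u^3_2)$, i.e.\ $u^1_2u^3_3-qu^1_3u^3_2=-q\,(u^2_1)^*$. With your stated identity $(u^2_1)^*=-(u^1_2u^3_3-qu^1_3u^3_2)$, the second entry of $\phi$ would come out as $-\ii q^{-\frac{1}{2}}(u^2_1)^*$ rather than the claimed $-\ii q^{\frac{1}{2}}(u^2_1)^*$, so the ``matching of signs and powers of $q$'' you assert would actually fail; with the corrected prefactor everything matches exactly as in \eqref{eq:phip33}. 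The conjugation step for $\phi^*$ is correct in outcome (keep in mind that $J$ is antilinear, so it also conjugates the overall $\ii$), and it agrees with the paper's one-line ``the second follows by conjugation''.
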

\begin{proof}
>From \eqref{eq:ddPij} and \eqref{eq:sphere} we get
$$
\phi= \ii q^{-\frac{1}{2}}\binom{u^2_2}{u^1_2}z_3- \ii q^{\frac{1}{2}}\binom{u^2_3}{u^1_3}z_2 \;,
$$
and using \eqref{eq:star}:
$$
\phi= \ii q^{-\frac{1}{2}}\binom{u^2_2u^3_3-qu^2_3u^3_2}{u^1_2u^3_3-qu^1_3u^3_2}=
\ii \binom{q^{-\frac{1}{2}}(u^1_1)^*}{-q^{\frac{1}{2}}(u^2_1)^*} \;,
$$
and this proves the first equation in \eqref{eq:phip33}. The second follows by conjugation.
\end{proof}

\begin{lemma}\label{lemma:3.1}
For any $\eta\in L_1$ and $\eta'\in L_{-1}$ it holds that
$$
z_1\dd (z_1^*\eta)
=p_{11}\nabla_1\eta
+q(\deb p_{11})\eta
\;,\qquad
z_1^*\dd (z_1\eta')=
p_{11}\nabla_{-1}\eta'
+q^{-1}(\de p_{11})\eta'
\;.
$$
\end{lemma}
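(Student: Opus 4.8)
The plan is to verify the two identities by a direct computation that expands the left-hand sides using the Leibniz rule and the projectivity of $L_{\pm 1}$, then recognizes the resulting terms as the connection plus a curvature-type correction. First I would use $\Psi_1^\dag\Psi_1=1$ to write, for $\eta\in L_1$,
$$
z_1\dd(z_1^*\eta)=z_1\dd(z_1^*\Psi_1^\dag\Psi_1\eta)
$$
and then apply the graded Leibniz rule for $\dd$ to split this into a term where $\dd$ hits $z_1^*\Psi_1^\dag$ and a term where $\dd$ hits $\Psi_1\eta$. The second term will be $z_1 z_1^*\Psi_1^\dag\dd(\Psi_1\eta)=p_{11}\nabla_1\eta$ after recalling $\nabla_1\eta=\Psi_1^\dag\dd(\Psi_1\eta)$ from \eqref{eq:conN} and $z_1z_1^*\Psi_1^\dag\Psi_1=z_1z_1^*$; more carefully one inserts $\Psi_1^\dag\Psi_1=1$ again so that $z_1\dd(z_1^*\Psi_1^\dag)\Psi_1\eta + z_1z_1^*\Psi_1^\dag\dd(\Psi_1\eta)$, the last being exactly $p_{11}\nabla_1\eta$.

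The crux is then to show that the leftover term equals $q(\deb p_{11})\eta$, i.e. that
$$
z_1\dd(z_1^*\Psi_1^\dag)\Psi_1 = q\,\deb p_{11}
$$
as an element of $\Omega^1$ acting by left multiplication. Here I would use $\Psi_1^\dag=(z_1,z_2,z_3)$ so that $z_1^*\Psi_1^\dag=(z_1^*z_1,z_1^*z_2,z_1^*z_3)=(p_{11},p_{12},p_{13})$, reducing the claim to $z_1\sum_j(\dd p_{1j})z_j = q\,\deb p_{11}$ after multiplying on the right by $\Psi_1=(z_1,z_2,z_3)^t$. Now I split $\dd=\de+\deb$. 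Using the explicit formula \eqref{eq:ddPij}, $\de p_{1j}=\ii q^{-1}(u^3_1)^*\binom{u^2_j}{u^1_j}$, the holomorphic part contributes $\sum_j z_1(\de p_{1j})z_j = \ii q^{-1}z_1(u^3_1)^*\binom{\sum_j u^2_j z_j}{\sum_j u^1_j z_j}$; since $z_j=u^3_j$, the sums $\sum_j u^i_j u^3_j$ vanish for $i\ne 3$ by the first orthogonality relation of Lemma~\ref{lemma:ortho}, killing the holomorphic part entirely (this is the reason the identity produces only $\deb p_{11}$). For the antiholomorphic part I use $\deb p_{1j}=\ii q^{-1}\binom{q^{-1/2}(u^1_1)^*}{-q^{1/2}(u^2_1)^*}u^3_j$, so $\sum_j z_1(\deb p_{1j})z_j=\ii q^{-1}z_1\binom{q^{-1/2}(u^1_1)^*}{-q^{1/2}(u^2_1)^*}\sum_j u^3_j z_j$, and $\sum_j u^3_j z_j=\sum_j u^3_j u^3_j$ is handled via the sphere relation \eqref{eq:sphere} together with the $q$-commutation $z_iz_j=qz_jz_i$; comparing with $\deb p_{11}=\ii q^{-1}\binom{q^{-1/2}(u^1_1)^*}{-q^{1/2}(u^2_1)^*}u^3_1$ and tracking the scalar that arises will produce the factor $q$.

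The second identity, for $\eta'\in L_{-1}$, is proved by the mirror argument: use $\Psi_{-1}^\dag\Psi_{-1}=1$ where $\Psi_{-1}=(q^2z_1,qz_2,z_3)^t$, expand $z_1^*\dd(z_1\Psi_{-1}^\dag\Psi_{-1}\eta')$ by Leibniz into $z_1^*\dd(z_1\Psi_{-1}^\dag)\Psi_{-1}\eta'+p_{11}\nabla_{-1}\eta'$ (using \eqref{eq:conN} for $N=-1$ and $z_1^*z_1=p_{11}$), and then verify $z_1^*\dd(z_1\Psi_{-1}^\dag)\Psi_{-1}=q^{-1}\de p_{11}$; now the $q$-weights in $\Psi_{-1}$ reverse the roles, the antiholomorphic part drops out by orthogonality, and the holomorphic part survives using \eqref{eq:ddfakePij} for $\de(z_kz_l^*)$ together with the sphere relations, yielding $q^{-1}$ instead of $q$. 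I expect the main obstacle to be bookkeeping: correctly normal-ordering the $z_i$'s and the matrix entries $u^i_j$, keeping the $q$-powers from \eqref{eq:sphere} and the $q$-commutation relations straight, and making sure the orthogonality relations of Lemma~\ref{lemma:ortho} are applied in the row/column form that actually matches the indices appearing — it is easy to lose or gain a power of $q$ there. Alternatively, one can bypass some of this by working directly from the $q$-derivative formulation: since $p_{11}=z_1^*z_1=x$, one has $z_1\dd(z_1^*\eta)-p_{11}\nabla_1\eta=[z_1(\dd z_1^*)]\eta$-type terms which can be matched against \eqref{eq:lemma36A} and Lemma~\ref{cor:qder}, but the cleanest route is still the explicit $u^i_j$ computation sketched above.
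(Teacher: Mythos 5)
Your overall route --- inserting $\Psi_1^\dag\Psi_1=1$ (resp.\ $\Psi_{-1}^\dag\Psi_{-1}=1$) so that the ordinary Leibniz rule applies, identifying the term $p_{11}\nabla_{\pm1}$, and computing the leftover term explicitly from \eqref{eq:ddPij}, \eqref{eq:ddfakePij} and Lemma~\ref{lemma:ortho} --- is genuinely different from the paper's proof, which never decomposes through $\Psi^\dag\Psi$ but differentiates $z_1^*\eta$ directly via the twisted Leibniz rule \eqref{glr}, the relations $z_i\za F_2=z_i^*\za E_2=0$, $z_i\za K_2=q^{1/2}z_i$, and the identities $\nabla_1\eta=q^{1/2}\dd\eta$, $\nabla_{-1}\eta'=q^{-1/2}\dd\eta'$. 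Your trick is legitimate precisely because both factors $p_{1j}$ and $z_j^*\eta$ lie in $\Aq$, where $\dd$ is an honest derivation (on $\Oq$ the extended $\de,\deb$ are only twisted derivations). However, as written the crux step of the first identity fails: one has $\Psi_1=(z_1^*,z_2^*,z_3^*)^t$ and $\Psi_1^\dag=(z_1,z_2,z_3)$, not $\Psi_1=(z_1,z_2,z_3)^t$. Hence the leftover term is $z_1\sum_j(\dd p_{1j})\,z_j^*\eta$, and the identity to prove is
$$
z_1\sum\nolimits_j(\dd p_{1j})\,z_j^*=q\,\deb p_{11}\;,
$$
not $z_1\sum_j(\dd p_{1j})\,z_j=q\,\deb p_{11}$. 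The latter does not hold, and the relations you invoke do not apply to it: Lemma~\ref{lemma:ortho} only controls sums in which one factor is conjugated, so $\sum_j u^i_ju^3_j$ ($i=1,2$) does not vanish, and $\sum_j u^3_jz_j=\sum_j z_j^2$ is not governed by the sphere relation \eqref{eq:sphere}.

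With the correct $\Psi_1$ the computation does close, but one further ingredient, absent from your sketch, is needed for the factor $q$. After the holomorphic part is killed by $\sum_j u^i_j(u^3_j)^*=0$ ($i=1,2$) and the antiholomorphic part is reduced by $\sum_j z_jz_j^*=1$ to $\ii q^{-1}\bigl(q^{-1/2}(u^1_1)^*,\,-q^{1/2}(u^2_1)^*\bigr)^t$, one must still commute $z_1$ from the left of this vector to its right to compare with $\deb p_{11}$, whose expression ends with $u^3_1=z_1$. The factor $q$ comes exactly from $z_1(u^i_1)^*=q\,(u^i_1)^*z_1$, $i=1,2$ (the adjoint of the relation $u^i_1z_1^*=qz_1^*u^i_1$ established in the proof of \eqref{eq:lemma36A}); the relations you cite, $z_iz_j=qz_jz_i$ and \eqref{eq:sphere}, do not produce it. Your plan for the second identity is fine as stated, since there $\Psi_{-1}=(q^2z_1,qz_2,z_3)^t$ is correct: the weights $q^{6-2j}$ combine with the $q$-trace relation $q^4p_{11}+q^2p_{22}+p_{33}=1$ to give the surviving holomorphic term $\ii q^{-2}(u^2_1,u^1_1)^t$, the antiholomorphic part vanishes by the weighted orthogonality $\sum_j q^{2(a-j)}(u^a_j)^*u^3_j=0$ ($a=1,2$), and no extra commutation is needed because $z_1^*$ already sits on the correct side, matching $q^{-1}\de p_{11}=\ii q^{-2}z_1^*(u^2_1,u^1_1)^t$.
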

\begin{proof}
Recall that $z_i\za F_2=z_i^*\za E_2=0$,
$z_i\za K_2=q^{\frac{1}{2}}z_i$ and that
$z_i$ is in the kernel of $E_1,F_1,K_1-1$. 
>From \eqref{eq:2.5}, using the coproduct and the above
observations, we get:
\begin{align*}
z_1\dd (z_1^*\eta) &=z_1(z_1^*\za K_2^{-1})\dd\eta+
z_1(\deb z_1^*)(\eta\za K_2)
=q^{\frac{1}{2}}p_{11}\dd\eta+
q^{\frac{1}{2}}z_1(\deb z_1^*)\eta
\;,\\
z_1^*\dd (z_1\eta') &=z_1^*(z_1\za K_2^{-1})\dd\eta'+
z_1^*(\de z_1)(\eta'\za K_2)
=q^{-\frac{1}{2}}p_{11}\dd\eta'+q^{-\frac{1}{2}}z_1^*(\de z_1)\eta'
\;,\\
q\deb p_{11} &=q(z_1\za K_2^{-1})\deb z_1^*
=q^{\frac{1}{2}}z_1\deb z_1^*
\;,\\
q^{-1}\de p_{11} &=q^{-1}(z_1^*\za K_2^{-1})\de z_1
=q^{-\frac{1}{2}}z_1^*\de z_1
\;.
\end{align*}
Thus
$$
z_1\dd (z_1^*\eta)=q^{\frac{1}{2}}p_{11}\dd\eta+q(\deb p_{11})\eta \;,\qquad
z_1^*\dd (z_1\eta')=q^{-\frac{1}{2}}p_{11}\dd\eta'+q^{-1}(\de p_{11})\eta' \;.
$$
>From (6.4), (6.5) and (6.6) of \cite{DL09b} and
the definition \eqref{eq:conN},
we see that
$q^{\frac{1}{2}}\dd\eta=\nabla_1\eta$.
In a similar way one shows that
$q^{-\frac{1}{2}}\dd\eta'=\nabla_{-1}\eta'$. This concludes the
proof.
\end{proof}

\begin{proof}[Proof of Proposition~\ref{eq:prop}]
Now, for $\eta=(\eta_1,0)^t$ and $\eta'=(0,\eta_2)^t$ one has
\begin{subequations}
\begin{align}
(\widetilde{\nabla}_t\eta)_1&=
\frac{1}{\sqrt{1-t^2p_{11}}}\,\psi^\dag\,
\dd
\left(
\psi\,\frac{1}{\sqrt{1-t^2p_{11}}}\,\eta_1
\right) \label{eq:omega11}
\;,\\
(\widetilde{\nabla}_t\eta)_2&=
\frac{1}{\sqrt{1-t^2q^4p_{11}}}\,\varphi^\dag\,
\dd
\left(
\psi\,\frac{1}{\sqrt{1-t^2p_{11}}}\,\eta_1
\right) \label{eq:omega21}
\;,\\
(\widetilde{\nabla}_t\eta')_2&=
\frac{1}{\sqrt{1-t^2q^4p_{11}}}\,\varphi^\dag\,
\dd
\left(
\varphi\,\frac{1}{\sqrt{1-t^2q^4p_{11}}}\,\eta_2
\right) \label{eq:omega22}
\;.
\end{align}
\end{subequations}
Notice that for any $\xi_1\in L_1$, $\xi_2\in L_{-1}$, and denoting $\xi=(\xi_1,0)$ and $\xi'=(0,\xi_2)$,
an easy algebraic manipulation gives:
\begin{align*}
\psi^\dag\dd(\psi\xi_1) &=(\widetilde{\nabla}_0\xi)_1
-t^2z_1\dd (z_1^*\xi_1)
 \;,
\\
\varphi^\dag\dd(\varphi\xi_2) &=(\widetilde{\nabla}_0\xi')_2
-t^2q^4z_1^*\dd (z_1\xi_2)
 \;,
\end{align*}
and using Lemma~\ref{lemma:3.1} we get:
\begin{align*}
\psi^\dag\dd(\psi\xi_1) &=(1-t^2p_{11})(\widetilde{\nabla}_0\xi)_1
-t^2q(\deb p_{11})\xi_1
\\
&=\psi^\dag\psi(\widetilde{\nabla}_0\xi)_1
-t^2 q(\deb p_{11})\xi_1
 \;,
\\
\varphi^\dag\dd(\varphi\xi_2) &=(1-t^2q^4p_{11})(\widetilde{\nabla}_0\xi')_2
-t^2q^3(\de p_{11})\xi_2
\\
&=\varphi^\dag\varphi(\widetilde{\nabla}_0\xi')_2
-t^2q^3(\de p_{11})\xi_2
 \;.
\end{align*}
Using these two equations in \eqref{eq:omega11} and \eqref{eq:omega22},
and inserting the identity matrix in the form $(\psi^\dag\psi)^{-1}\psi^\dag\psi$ in
\eqref{eq:omega21}, one gets
\begin{align*}
(\widetilde{\nabla}_t\eta)_1&=
\sqrt{1-t^2p_{11}}\;\widetilde{\nabla}_0\!
\left(
\frac{1}{\sqrt{1-t^2p_{11}}}\,\eta
\right)_1
-t^2 q
\frac{1}{\sqrt{1-t^2p_{11}}}\,
(\deb p_{11})\,\frac{1}{\sqrt{1-t^2p_{11}}}\,\eta_1
\;,\\
(\widetilde{\nabla}_t\eta)_2&=
\frac{1}{\sqrt{1-t^2q^4p_{11}}}\,\varphi^\dag\,
\dd
\left(
\psi\psi^\dag\psi\,\frac{1}{(1-t^2p_{11})^{\frac{3}{2}}}\,\eta_1
\right)
\;,\\
(\widetilde{\nabla}_t\eta')_2&=
\sqrt{1-t^2q^4p_{11}}\;\widetilde{\nabla}_0\!
\left(
\frac{1}{\sqrt{1-t^2q^4p_{11}}}\,\eta'
\right)_2
-t^2q^3
\frac{1}{\sqrt{1-t^2q^4p_{11}}}\,
(\de p_{11})
\,\frac{1}{\sqrt{1-t^2q^4p_{11}}}\,\eta_2
\;.
\end{align*}
Using the Leibniz rule and \eqref{eq:psiphi}
we can rewrite the second as
$$
(\widetilde{\nabla}_t\eta)_2=
\frac{1}{\sqrt{1-t^2q^4p_{11}}}\,\varphi^\dag\,
\dd(\psi\psi^\dag)\psi\,\frac{1}{(1-t^2p_{11})^{\frac{3}{2}}}\,\eta_1 \;.
$$
Therefore, using \eqref{eq:leftLeib}
for $\widetilde{\nabla}_0=\nabla_{1}\oplus\nabla_{-1}$,
we find $\widetilde{\nabla}_t=\widetilde{\nabla}_0+\omega_t$, with
\begin{align*}
(\omega_t)_{11} &=q
\sqrt{1-t^2p_{11}}\;\dd
\frac{1}{\sqrt{1-t^2p_{11}}}
-t^2 q
\frac{1}{\sqrt{1-t^2p_{11}}}\,
(\deb p_{11})\,\frac{1}{\sqrt{1-t^2p_{11}}}
\;,\\[5pt]
(\omega_t)_{21} &=
\frac{1}{\sqrt{1-t^2q^4p_{11}}}\,\varphi^\dag\,
\dd(\psi\psi^\dag)\psi\,\frac{1}{(1-t^2p_{11})^{\frac{3}{2}}}
\;,\\[5pt]
(\omega_t)_{22} &=q^{-1}
\sqrt{1-t^2q^4p_{11}}\;\dd
\frac{1}{\sqrt{1-t^2q^4p_{11}}}
-t^2q^3
\frac{1}{\sqrt{1-t^2q^4p_{11}}}\,
(\de p_{11})
\,\frac{1}{\sqrt{1-t^2q^4p_{11}}}
\;.
\end{align*}
This means
\begin{subequations}
\begin{align}
A^* &=q\sqrt{1-t^2p_{11}}\;\de
\frac{1}{\sqrt{1-t^2p_{11}}}
\label{eq:1}
\;,\\[5pt]
B &=q^{-1}\sqrt{1-t^2q^4p_{11}}\;\deb
\frac{1}{\sqrt{1-t^2q^4p_{11}}}
\label{eq:3}
\;,\\[5pt]
\Phi &=-(\omega_t)_{21}^*=
-\frac{1}{(1-t^2p_{11})^{\frac{3}{2}}}\,\psi^\dag\,
\dd(\psi\psi^\dag)\varphi\,\frac{1}{\sqrt{1-t^2q^4p_{11}}} \;.
\label{eq:2}
\end{align}
\end{subequations}
The element $\Phi$ can be simplified.
An explicit computation gives:
\begin{align*}
-\psi^\dag\,\dd(\psi\psi^\dag)\varphi &=
\sin2\theta (\cos 2\theta\,z_1,z_2,z_3)\,
\dd\!\left(
\begin{pmatrix}
\cos 2\theta\,z_1^* \\ z_2^* \\ z_3^*
\end{pmatrix}
(z_2,z_3)
\right)
\binom{z_3}{-qz_2}
\\ &=
t\left(\sum\nolimits_{j=1}^3z_j\big\{(\dd p_{j2})z_3-q(\dd p_{j3})z_2\big\}
-t^2z_1\big\{(\dd p_{12})z_3-q(\dd p_{13})z_2\big\}\right) \;.
\end{align*}
The first term in the braces is just $q^{-\frac{1}{2}}\phi$. For the second term
one proves, using \eqref{eq:ddPij} and Lemma~\ref{prop:3.4}, that it's equal to $-t^2p_{11}q^{-\frac{1}{2}}\phi$.
Hence $-\psi^\dag\,\dd(\psi\psi^\dag)\varphi=t(1-t^2p_{11})q^{-\frac{1}{2}}\phi$ and
$$
\Phi=\frac{1}{\sqrt{1-t^2p_{11}}}\,
q^{-\frac{1}{2}}t\phi
\,\frac{1}{\sqrt{1-t^2q^4p_{11}}} \;.
$$
This concludes the proof of Proposition~\ref{eq:prop}.
\end{proof}


\section{Noncommutative families of instantons}

As observed in Sec.~\ref{sec:evb}, equation \eqref{eq:DeltaL} gives
a left coaction $\Delta_L$ of $\SU_q(3)$ on any module $\E(\sigma)$ of the type \eqref{eq:Esigma}.
In particular, the algebra of forms $\Omega^{\bullet,\bullet}$ is a graded
left $\SU_q(3)$-comodule $*$-algebra, with the coaction commuting with the exterior differential $\dd$, the latter
defined in \eqref{eq:2.5} via the left action $\mL{}$ of $\Uq$. In fact,
 $\Delta_L$ preserves also the decomposition $\Omega^{1,1}\simeq\Omega^{1,1}_v\oplus\Omega^{1,1}_s$, thus sending
ASD connections to ASD connections, with the Hodge star on the image of 
$\Delta_L$ defined by the trivial lift:
\begin{equation}\label{ext-star}
\hat{\star}_H:=\id_{\Oq}\otimes\Hs \;.
\end{equation}

\subsection{Adjoint coaction of $\SU_q(3)$ on connections}

If $\E$ is one of the modules $L_N$ or a direct sum of them,
we let $\Omega^{i,j}(\E)$ be the equivariant module of forms with
coefficients in $\E$, as explained in Sec.~\ref{sec:dfwcL}.
This is a module of the type \eqref{eq:Esigma}, and as such
is a $\SU_q(3)$-comodule for the coaction $\Delta_L$ in \eqref{eq:DeltaL},
that maps
 \begin{equation}\label{eq:DeltaE}
\Delta_L: \Omega^{i,j}(\E) \to \widehat{\Omega}^{i,j}(\E) :=\Oq\otimes\Omega^{i,j}(\E) \;.
\end{equation}
The space $\widehat{\Omega}^{i,j}(\E)$ is a right $\Omega^{\bullet,\bullet}(\CP^2_q)$-module for the wedge multiplication from the right on the factor $\Omega^{i,j}(\E)$ of the tensor product.
 Given a right module connection $\nabla$ on $\E$, the map $\id\otimes\nabla$ is a right module connection on $\widehat{\E}=\Oq\otimes\E$, and if $\nabla$ is (anti)self-dual, $\id\otimes\nabla$ is (anti)self-dual too for the Hodge star operator extended as in \eqref{ext-star}. 

The left coaction \eqref{eq:DeltaE} can be lifted to an adjoint coaction on connections.
We implement this by using the multiplicative unitary of $\SU_q(3)$ (in the sense of \cite{Wor96}). Consider then the ($\C$-linear) endomorphisms of $\widehat{\Omega}^{i,j}(\E)$ given by
$$
U:=(m\otimes\id)(\id\otimes\Delta_L) \;,\qquad
W:=(m\otimes\id)(\id\otimes S\otimes\id)(\id\otimes\Delta_L) \;,
$$
where $m$ is the multiplication map. Explicitly, for all $a\in\Oq$ and $\eta\in\Omega^{i,j}(\E)$:
$$
U(a\otimes \eta)=aS^{-1}(\eta_{(2)})\otimes \eta_{(1)} \;,\qquad
W(a\otimes \eta)=a\eta_{(2)}\otimes \eta_{(1)} \;,
$$
with $\Delta^{\mathrm{cop}}(\eta) = \eta_{(2)}\otimes\eta_{(1)}$ in Sweedler notation.
 Using $S^{-1}(\eta_{(2)})\eta_{(1)}=\eta_{(2)}S^{-1}(\eta_{(1)})=\epsilon(\eta)$,
one explicitly checks that $U$ and $W$ are one the inverse of the other.

\begin{lemma}
For any $a\in\Oq$, $\eta\in\Omega^{i,j}(\E)$ and $\omega\in\Omega^{i,j}(\CP^2_q)$ it holds that
\begin{equation}\label{eq:silly}
W(a\otimes\eta\omega)=W(a\otimes\eta)\Delta^{\mathrm{cop}}(\omega)
\end{equation}
where $\Delta^{\mathrm{cop}}(x)=x_{(2)}\otimes x_{(1)}$ is the opposite coproduct. 
\end{lemma}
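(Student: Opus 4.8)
The plan is to verify \eqref{eq:silly} by a direct computation that unwinds the definition of $W$ and uses the facts that $\Delta_L$ is a left coaction which, on the module of forms, is also compatible with the $\wprod$-product structure. First I would recall that $W(a\otimes\eta)=a\eta_{(2)}\otimes\eta_{(1)}$ where $\Delta^{\mathrm{cop}}(\eta)=\eta_{(2)}\otimes\eta_{(1)}$, i.e.\ $\Delta_L(\eta)=\eta_{(1)}\otimes\eta_{(2)}$ in the notation of \eqref{eq:DeltaL}; here the first tensor leg lives in $\Oq$ and the second in $\Omega^{i,j}(\E)$. The right $\Omega^{\bullet,\bullet}$-module structure on $\widehat\Omega^{i,j}(\E)$ acts on the second leg by $\wprod$, so $(a\otimes\eta)\omega=a\otimes(\eta\wprod\omega)$ and $W(a\otimes(\eta\wprod\omega))=a(\eta\wprod\omega)_{(2)}\otimes(\eta\wprod\omega)_{(1)}$, where now $(\eta\wprod\omega)_{(1)}\otimes(\eta\wprod\omega)_{(2)}=\Delta_L(\eta\wprod\omega)$.

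The key algebraic input is that $\Delta_L$ is multiplicative with respect to $\wprod$ in the appropriate sense: since $\Omega^{\bullet,\bullet}$ is a left $\SU_q(3)$-comodule $*$-algebra (as recalled at the start of Section~5, the coaction $\Delta_L$ commutes with $\dd$ and respects the wedge product), one has $\Delta_L(\eta\wprod\omega)=\Delta_L(\eta)\,\Delta_L(\omega)$ with the product taken leg-by-leg — the $\Oq$-legs multiplied in $\Oq$, the form-legs multiplied by $\wprod$. Writing $\Delta_L(\eta)=\eta_{(1)}\otimes\eta_{(2)}$ and $\Delta_L(\omega)=\omega_{(1)}\otimes\omega_{(2)}$, this gives $\Delta_L(\eta\wprod\omega)=\eta_{(1)}\omega_{(1)}\otimes(\eta_{(2)}\wprod\omega_{(2)})$. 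Substituting into the formula for $W$:
\begin{align*}
W\big(a\otimes(\eta\wprod\omega)\big)
&=a\,\eta_{(2)}\omega_{(2)}\otimes(\eta_{(1)}\wprod\omega_{(1)}) \\
&=\big(a\,\eta_{(2)}\otimes\eta_{(1)}\big)\cdot\big(\omega_{(2)}\otimes\omega_{(1)}\big) \\
&=W(a\otimes\eta)\,\Delta^{\mathrm{cop}}(\omega),
\end{align*}
where in the middle line the dot denotes the product on $\Oq\otimes\Omega^{\bullet,\bullet}$ in which the $\Oq$-legs multiply on the right and the form-legs $\wprod$-multiply on the right, and in the last line we used that $\Delta^{\mathrm{cop}}(\omega)=\omega_{(2)}\otimes\omega_{(1)}$ is exactly the pair of flipped legs of $\Delta_L(\omega)$. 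This is precisely \eqref{eq:silly}.

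The main obstacle is bookkeeping rather than conceptual: one must be careful that $\Delta_L$ in \eqref{eq:DeltaL} is built from the \emph{right} regular coaction via the antipode, so it is genuinely a left coaction but the comodule-algebra property $\Delta_L(\eta\wprod\omega)=\Delta_L(\eta)\Delta_L(\omega)$ holds on $\Omega^{\bullet,\bullet}$ precisely because $\wprod$ was constructed to be left $\Kq$-covariant and, more to the point, $\SU_q(3)$-covariant for $\Delta_L$ — this is the content of the remark opening Section~5 that $\Omega^{\bullet,\bullet}$ is a graded left $\SU_q(3)$-comodule $*$-algebra. Once that compatibility is granted, the identity \eqref{eq:silly} is a one-line Sweedler manipulation, with the appearance of $\Delta^{\mathrm{cop}}$ simply tracking the leg-flip built into the definition of $W$. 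I would also note in passing that the same computation with $U$ in place of $W$ would produce $U(a\otimes\eta\omega)=U(a\otimes\eta)\Delta(\omega)$ with $S^{-1}$ inserted appropriately, but only \eqref{eq:silly} is needed in the sequel.
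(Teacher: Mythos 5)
Your final Sweedler identity is the right one, but the justification you give for the key step is wrong, and it is precisely the point the paper's proof is built to avoid. You base everything on the claim that $\Delta_L$ is multiplicative for $\wprod$, i.e.\ $\Delta_L(\eta\wprod\omega)=\Delta_L(\eta)\,\Delta_L(\omega)$ leg-by-leg, appealing to the ``comodule $*$-algebra'' phrase at the start of Section~5. But $\Delta_L=(\id\otimes S)\Delta S^{-1}=(S^{-1}\otimes\id)\Delta^{\mathrm{cop}}$ is \emph{not} an algebra morphism --- the paper states this explicitly in Sec.~2.2 (``a comodule but not a comodule-algebra''). Indeed, with subscripts denoting legs of the ordinary coproduct, $\Delta_L(\eta\omega)=S^{-1}(\omega_{(2)})S^{-1}(\eta_{(2)})\otimes\eta_{(1)}\omega_{(1)}$, whereas $\Delta_L(\eta)\Delta_L(\omega)=S^{-1}(\eta_{(2)})S^{-1}(\omega_{(2)})\otimes\eta_{(1)}\omega_{(1)}$: the anti-multiplicativity of $S^{-1}$ reverses the order of the $\Oq$-factors, and $\Oq$ is noncommutative, so your claimed identity fails in general. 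Your bookkeeping hides this because it conflates the legs of $\Delta$ and of $\Delta_L$: the parenthetical ``$\Delta_L(\eta)=\eta_{(1)}\otimes\eta_{(2)}$'' contradicts \eqref{eq:DeltaL}, and if the subscripts really were $\Delta_L$-legs then your formula $W(a\otimes\eta)=a\eta_{(2)}\otimes\eta_{(1)}$ would be missing an antipode. These two slips cancel, which is why the displayed end formula happens to be true, but the argument as written does not prove it.

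The missing idea --- and the entire content of the paper's proof --- is that the antipode inserted in the definition of $W$ cancels the $S^{-1}$ sitting inside $\Delta_L$: from $\Delta_L=(S^{-1}\otimes\id)\Delta^{\mathrm{cop}}$ one gets $W=(m\otimes\id)(\id\otimes\Delta^{\mathrm{cop}})$, and it is $\Delta^{\mathrm{cop}}$, not $\Delta_L$, that is an algebra morphism. Applied (componentwise) to $\eta\wprod\omega$ this gives
\begin{equation*}
W(a\otimes\eta\wprod\omega)=a\,\eta_{(2)}\omega_{(2)}\otimes\eta_{(1)}\wprod\omega_{(1)}
=W(a\otimes\eta)\,\Delta^{\mathrm{cop}}(\omega)\;,
\end{equation*}
with all subscripts now legs of the ordinary coproduct, which is exactly \eqref{eq:silly}. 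If you replace your appeal to multiplicativity of $\Delta_L$ by this cancellation plus the multiplicativity of $\Delta^{\mathrm{cop}}$, your computation becomes correct and coincides with the paper's.
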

\begin{proof}
>From $\Delta_L=(S^{-1}\otimes\id)\Delta^{\mathrm{cop}}$ it follows that
$W=(m\otimes\id)(\id\otimes\Delta^{\mathrm{cop}})$.
Since $\Delta^{\mathrm{cop}}$ is an algebra morphism,
we get \eqref{eq:silly}.
\end{proof}
Recall that $\Delta(\dd\omega)=(\dd\otimes\id)\Delta(\omega)$ for all $\omega\in\Omega^{\bullet,\bullet}$, expressing the (right) covariance of the calculus, due to the definition of $\dd$ using the right action of $\Uq$. Applying the flip we get
the covariance expressed for the opposite coproduct, that is
\begin{equation}\label{eq:sillyB}
\Delta^{\mathrm{cop}}(\dd\omega)=(\id\otimes\dd)\Delta^{\mathrm{cop}}(\omega) \;,
\end{equation}
a result that we shall use momentarily.

\begin{prop}
Let $\nabla$ be a right-module connection on $\E$. Then the operator
$$
\widehat{\nabla}:=U(\id\otimes\nabla)U^{-1} 
$$
is a right-module connection on $\widehat{\E}=\Oq\otimes\E$.
\end{prop}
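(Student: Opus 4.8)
The plan is to verify directly that $\widehat{\nabla} = U(\id\otimes\nabla)U^{-1}$ satisfies the defining Leibniz rule for a right-module connection on $\widehat{\E} = \Oq\otimes\E$, namely
$$
\widehat{\nabla}(\xi\,\omega) = \widehat{\nabla}(\xi)\,\omega + (-1)^{\deg\xi}\,\xi\,\dd\omega
$$
for all $\xi\in\widehat{\Omega}^{\bullet,\bullet}(\E)$ and $\omega\in\Omega^{\bullet,\bullet}(\CP^2_q)$, where the right multiplication on $\widehat{\E}$ is by $\omega$ acting on the $\E$-factor (the ``trivial lift'' from \eqref{eq:DeltaE}). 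Since $U$ and $W$ are mutually inverse, it is equivalent to show that $W\widehat{\nabla}W^{-1} = W U (\id\otimes\nabla) U^{-1} W^{-1} = \id\otimes\nabla$ intertwines appropriately, or — more cleanly — to compute $\widehat{\nabla}(\xi\omega)$ by writing $\xi = U(a\otimes\eta) = W^{-1}(a\otimes\eta)$ for a generic $a\otimes\eta$ and pushing everything through.

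First I would reduce to generators: it suffices to check the Leibniz rule on elements of the form $\xi = U(a\otimes\eta)$ with $a\in\Oq$, $\eta\in\Omega^{i,j}(\E)$, since these span. Applying $U^{-1} = W$ and using Lemma~\ref{eq:silly}, I get $W(\xi\,\omega) = W(U(a\otimes\eta)\,\omega)$; here one must be slightly careful that $\xi\,\omega$ means right multiplication on the $\E$-slot, and that $W$ transports this correctly. Concretely, writing $\xi\,\omega = U(a\otimes\eta)\,\omega$ and noting $U = W^{-1}$, I want a formula for $W(U(a\otimes\eta)\,\omega)$. The cleanest route is to observe $\xi\omega = U\bigl(W(\xi\omega)\bigr)$ and to compute $W(\xi\,\omega)$ directly: since $W(a'\otimes\eta'\omega) = W(a'\otimes\eta')\,\Delta^{\mathrm{cop}}(\omega)$ by the Lemma, and $W(\xi) = W U(a\otimes\eta) = a\otimes\eta$, I obtain $W(\xi\,\omega) = (a\otimes\eta)\,\Delta^{\mathrm{cop}}(\omega) = a\omega_{(2)}\otimes\eta\,\omega_{(1)}$.

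Now apply $\id\otimes\nabla$ and use the Leibniz rule for $\nabla$ on $\E$ in the second slot:
$$
(\id\otimes\nabla)\bigl(a\omega_{(2)}\otimes\eta\,\omega_{(1)}\bigr)
= a\omega_{(2)}\otimes\Bigl(\nabla(\eta)\,\omega_{(1)} + (-1)^{\deg\eta}\eta\,\dd\omega_{(1)}\Bigr).
$$
The first summand is $\bigl((\id\otimes\nabla)(a\otimes\eta)\bigr)\cdot\Delta^{\mathrm{cop}}(\omega)$; the second is $(-1)^{\deg\eta}(a\otimes\eta)\cdot\bigl(\omega_{(2)}\otimes\dd\omega_{(1)}\bigr) = (-1)^{\deg\eta}(a\otimes\eta)\,\Delta^{\mathrm{cop}}(\dd\omega)$ by the covariance identity \eqref{eq:sillyB}. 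Finally applying $U = W^{-1}$ and using Lemma~\ref{eq:silly} again (in the form $U(a'\otimes\eta')\,\omega = U\bigl(W(\cdot)\,\Delta^{\mathrm{cop}}(\omega)\bigr)$ read backwards), these two terms become $\widehat{\nabla}(\xi)\,\omega$ and $(-1)^{\deg\xi}\xi\,\dd\omega$ respectively, noting $\deg\xi = \deg\eta$. The $\C$-linearity of $\widehat{\nabla}$ is automatic since $U$, $\id\otimes\nabla$, $U^{-1}$ are all $\C$-linear.

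I expect the main obstacle to be purely bookkeeping: making sure the right-$\Omega^{\bullet,\bullet}(\CP^2_q)$-module structure on $\widehat{\Omega}^{\bullet,\bullet}(\E)$ is consistently defined (acting only on the $\E$-slot by $\wprod$) and that the identities $W(a\otimes\eta\omega) = W(a\otimes\eta)\Delta^{\mathrm{cop}}(\omega)$ and $\Delta^{\mathrm{cop}}(\dd\omega) = (\id\otimes\dd)\Delta^{\mathrm{cop}}(\omega)$ are applied on the correct tensor leg with the correct signs. There is no genuine analytic or algebraic difficulty: the statement is a formal consequence of the coassociativity of $\Delta_L$, the fact that $U$ and $W$ are inverse isomorphisms intertwining the module structures (Lemma~\ref{eq:silly}), and the covariance of the calculus. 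Once the conventions are pinned down the computation is three lines.
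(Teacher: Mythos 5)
Your proposal is correct and follows essentially the same route as the paper: both verify the graded Leibniz rule by transporting through the mutually inverse maps $U$ and $W$, using the identities $W(a\otimes\eta\omega)=W(a\otimes\eta)\Delta^{\mathrm{cop}}(\omega)$ and $\Delta^{\mathrm{cop}}(\dd\omega)=(\id\otimes\dd)\Delta^{\mathrm{cop}}(\omega)$ together with the Leibniz rule for $\nabla$ on the second leg. The only difference is bookkeeping: you parametrize $\xi=U(a\otimes\eta)$ and compute $\widehat{\nabla}(\xi\omega)$ forward, whereas the paper applies $W$ to all three terms of the Leibniz identity for $\xi=a\otimes\eta$ and concludes by invertibility of $W$ --- the same computation in a different order.
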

\begin{proof}
Recall that $U^{-1}=W$, and explicitly $W(a\otimes\eta)=a\eta_{(2)}\otimes\eta_{(1)}$.
For any $a\in\Oq$, $\eta\in\Omega^{i,j}(\E)$ and $\omega\in\Omega^{i,j}(\CP^2_q)$,
one computes
\begin{align*}
W\widehat{\nabla}(a\otimes\eta\omega)
&=(\id\otimes\nabla)(a\eta_{(2)}\omega_{(2)}\otimes\eta_{(1)}\omega_{(1)}
=a\eta_{(2)}\omega_{(2)}\otimes\nabla(\eta_{(1)}\omega_{(1)})
\\[2pt]
&=a\eta_{(2)}\omega_{(2)}\otimes(\nabla\eta_{(1)})\omega_{(1)}
+(-1)^{\mathrm{dg}(\eta)}a\eta_{(2)}\omega_{(2)}\otimes\eta_{(1)}\dd\omega_{(1)} \;.
\end{align*}
On the other hand, from \eqref{eq:silly} and \eqref{eq:sillyB} we get
\begin{align*}
W\Big( \widehat{\nabla}(a\otimes\eta)\omega \Big)
&=\Big( W\widehat{\nabla}(a\otimes\eta) \Big) \Delta^{\mathrm{cop}}(\omega)
=a\eta_{(2)}\omega_{(2)}\otimes\nabla(\eta_{(1)})\omega_{(1)} \;,
\\
W\Big( (a\otimes\eta)\dd\omega \Big)
&=W(a\otimes\eta)\Delta^{\mathrm{cop}}(\dd\omega)
=(a\eta_{(2)}\otimes\eta_{(1)})(\id\otimes\dd)\Delta^{\mathrm{cop}}(\omega)
\\
&=a\eta_{(2)}\omega_{(2)}\otimes\eta_{(1)}\dd\omega_{(1)} \;.
\end{align*}
Thus
$$
W\widehat{\nabla}(\xi\omega)=
W\big( (\widehat{\nabla}\xi)\omega \big)+(-1)^{\mathrm{dg}(\xi)}W\big( \xi\dd\omega \big)
$$
for all $\xi=a\otimes\eta\in\widehat{\Omega}^{\bullet,\bullet}(\E)$ and $\omega\in\Omega^{\bullet,\bullet}$, and since $W$ is invertible, this
is equivalent to the graded Leibniz rule for $\widehat{\nabla}$.
\end{proof}

>From the expression for the square of the connection, 
$$
\widehat{\nabla}^2=U(\id\otimes\nabla^2)U ^{-1} \, ,
$$
we can easily work out the curvature two-form (-valued endomorphism) of $\widehat{\nabla}$.
If $F$ is the curvature two-form of $\nabla$, that is $\nabla^2\eta=F\eta$,
for any $a\otimes\eta\in\widehat{\Omega}^{\bullet,\bullet}(\E)$, we get:
\begin{align*}
\widehat{\nabla}^2(a\otimes\eta) &=U\big\{a\eta_{(2)}\otimes\nabla^2\eta_{(1)}\big\}
=U\big\{a\eta_{(2)}\otimes F\eta_{(1)}\big\}
\\
&=a\eta_{(2)}S^{-1}({\eta_{(1)}}_{(2)})S^{-1}(F_{(2)})\otimes F_{(1)}{\eta_{(1)}}_{(1)}
\\
&=a{\eta_{(2)}}_{(2)}S^{-1}({\eta_{(2)}}_{(1)})S^{-1}(F_{(2)})\otimes F_{(1)}\eta_{(1)}
\\
&=a (\epsilon(\eta_{(2)})) S^{-1}(F_{(2)})\otimes F_{(1)}\eta_{(1)}
=aS^{-1}(F_{(2)})\otimes F_{(1)}\eta \;.
\end{align*}
Where we used the coassociativity of $\Delta^{\mathrm{cop}}$: $\eta_{(2)} \otimes {\eta_{(1)}}_{(2)} \otimes {\eta_{(1)}}_{(1)} 
= {\eta_{(2)}}_{(2)}\otimes {\eta_{(2)}}_{(1)} \otimes \eta_{(1)}$, together with the properties 
${\eta_{(2)}}_{(2)}S^{-1}({\eta_{(2)}}_{(1)})=\epsilon(\eta_{(2)})$ and $\epsilon(\eta_{(2)}) \eta_{(1)} = \eta$.

Thus, the curvature two-form of $\widehat{\nabla}$ is given by
\begin{equation}\label{whf}
\widehat{F}:=\Delta_L(F) = S^{-1}(F_{(2)})\otimes F_{(1)}
\end{equation}
acting ``from the middle'' on $\widehat{\E}=\Oq\otimes\E$. 

As mentioned, the coaction $\Delta_L$ of $\SU_q(3)$ preserves the decomposition 
$\Omega^{1,1}=\Omega^{1,1}_v\oplus\Omega^{1,1}_s$. Hence, if $F$ is (anti)self-dual,
the curvature $\widehat{F}$ is  (anti)self-dual as well.

Next, suppose $\nabla \eta=\nabla_0\eta+\omega\eta$ where $\nabla_0$ is an invariant connection, that is, 
$$
(\id\otimes\nabla_0)\Delta_L(\eta)=\Delta_L(\nabla_0\eta) \;,
$$
and $\omega$ a one-form (-valued endomorphism). Then it follows immediately that 
$\widehat{\nabla}_0=\id\otimes\nabla_0$, while for any $a\otimes\eta\in\widehat{\E}$, 
proceeding as done before with $\widehat{\nabla}^2$: 
\begin{align*}
(\widehat{\nabla}-\widehat{\nabla}_0)(a\otimes\eta) 
& = U(a\eta_{(2)}\otimes\omega\eta_{(1)})
 = a\eta_{(2)}S^{-1}({\eta_{(1)}}_{(2)})S^{-1}(\omega_{(2)})\otimes\omega_{(1)}{\eta_{(1)}}_{(2)} \\
&=a{\eta_{(2)}}_{(2)}S^{-1}({\eta_{(2)}}_{(1)})S^{-1}(\omega_{(2)})\otimes \omega_{(1)}\eta_{(1)}
 = aS^{-1}(\omega_{(2)})\otimes\omega_{(1)}\eta \;.
\end{align*}
This can be read as the connection one-form of $\widehat{\nabla}$ be given by
\begin{equation}\label{who}
\widehat{\omega}:=\Delta_L(\omega)
\end{equation}
once again acting ``from the middle'' on $\widehat{\E}=\Oq\otimes\E$. 

\subsection{Coacting on instantons}
Let us know specialize the above discussion to the module $\E=L_1\oplus L_{-1}$, on which
there is a one-parameter family of right-module connections \mbox{$\widetilde{\nabla}_t=\widetilde{\nabla}_0+\omega_t$} given by equation \eqref{c1f},
with the connection one-form $\omega_t$ given in Proposition~\ref{eq:prop}.   
>From  $\widetilde{\nabla}_0=\nabla_1\oplus\nabla_{-1}$ and the expression of the monopole connections in \eqref{eq:conN}, it follows that $\widetilde{\nabla}_0$ is invariant. 
Thus
$$
U(\id\otimes\widetilde{\nabla}_t)U^{-1}=\id\otimes\widetilde{\nabla}_0+\widehat{\omega}_t \;,
$$
where $\widehat{\omega}_t=\Delta_L(\omega_t)$ acts ``from the middle'' on $\Oq\otimes \left( L_1\oplus L_{-1} \right)$.

Although $\widehat{\omega}_t\in\Oq\otimes\Omega^1(L_1\oplus L_{-1})$, elements in the left leg of the connection one-form generate an algebra smaller than $\Oq$, that is what we want now to determine.

>From \eqref{eq:prop1}-\eqref{eq:prop3} we see that the matrix entries of $\omega_t$
are linear combinations of $\de x$, $\deb x$, $\phi$ and $\phi^*$, with $x=p_{11}$
and $\phi$ as in \eqref{eq:phip33}. From the mentioned equation we get
\begin{gather*}
\Delta_L(x)=\sum\nolimits_{j,k}q^{2(k-1)}(u^1_k)^*u^1_j\otimes p_{jk} \;,\qquad
\Delta_L(\de x)=\sum\nolimits_{j,k}q^{2(k-1)}(u^1_k)^*u^1_j\otimes \de p_{jk} \;, \\
\Delta_L(\phi^*)=\ii \sum\nolimits_jq^{2(j-1)}(u^1_j)^*\otimes\binom{u^2_j}{u^1_j} \;.
\end{gather*}
where we used $S^{-1}((u^i_j)^*)=u^j_i$ and $S^{-1}(u^i_j)=q^{2(i-j)}(u^j_i)^*$.
Since by Rem.~\ref{rem:otherS} the \mbox{$*$-algebra} generated by the elements $\{u^1_i\}_{i=1,2,3}$ and their
conjugated is isomorphic to $\Sq$, for any $t\neq 0$ we get a non-commutative space of ASD connections isomorphic to $\mathrm{S}^5_q$.

On the other hand, let $g$ be the generator of $\A(\mathrm{U}(1))$. The group $\mathrm{U}(1)$
is a quantum subgroup of $\SU_q(3)$; the surjective Hopf $*$-algebra morphism
$\Oq\to\A(\mathrm{U}(1))$ is given by $u^i_j\mapsto\delta_{i,j}g^{2-j}$ (dual to the diagonal inclusion $e^{i\theta}\mapsto\mathrm{diag}(e^{i\theta},1,e^{-i\theta})$ for $q=1$).

The coaction of the quantum subgroup $\mathrm{U}(1)$ leaves $x=p_{11}$ invariant
and maps $\phi$ to $g\otimes\phi$ and $\phi^*$ to $g^*\otimes\phi^*$.
Thus
$$
\omega_t\mapsto\maa{1\otimes 1 & 0 \\ 0 & g^*\otimes 1}(1\otimes\omega_t)
\maa{1\otimes 1 & 0 \\ 0 & g\otimes 1}
$$
corresponds to a (global) gauge transformation and can be neglected.

Summing up, modulo gauge transformations, for any $t\neq 0$ we have a noncommutative family
of ASD connections parametrized by $\mathrm{S}^5_q/\mathrm{U}(1)\simeq\CP^2_q$.


\section{Classical results from a noncommutative view-point}\label{sec:AppA}

In this section $q=1$, that is we deal with the classical projective space $\CP^n$. 
We restate some of the geometrical properties of $\CP^n$ from a noncommutative view-point,
so as to readily generalized them to the noncommutative deformations.

\subsection{The K{\"a}hler form of classical $\CP^2$} 
Let $[z_1,\ldots,z_{n+1}]$ be homogeneous coordinates on $\CP^n$.
 On the chart $U_\alpha:=\{z_\alpha\neq 0\}\simeq\C^n$ there are complex
coordinates
\begin{equation}\label{eq:locxi}
Z_\beta=z_\beta/z_\alpha \;,\quad\forall\;\beta\neq\alpha\;,
\end{equation}
and the transition functions are holomorphic on the intersections.

As a real manifold $\CP^n$ is diffeomorphic to the set $\mathfrak{M}$ of those matrices 
$p\in M_{n+1}(\C)$ such that $p=p^*=p^2$ and $\sum_ip_{ii}=1$. The map 
$\CP^n\to\mathfrak{M}$ is given by
\begin{equation}\label{eq:pij}
[z_1,\ldots,z_{n+1}]\mapsto p_{ij}=||z||^{-2}\bar z_iz_j \;.
\end{equation}
where $||z||^2=\sum_k|z_k|^2$.
The inverse map sends $p$ to the point $[z_1,\ldots,z_{n+1}]$
defined as the equivalence class of any non-zero row of $p$
(since $p$ is a rank $1$ projection, $p\neq 0$ and it has always at least one non-zero row).
One could restrict the homogeneous coordinates to $(z_1,\ldots,z_{n+1})\in S^{2n+1}$ 
and components of the projection in \eqref{eq:pij} would just be $p_{ij}=\bar z_iz_j$.

On the chart $U_\alpha$, the K{\"a}hler form associated to the Fubini-Study metric is:
 \begin{align*}
\kahler &=\frac{ \ii }{2}\left\{\frac{1}{1+||Z||^2}\sum \de Z_\beta\wedge\deb\bar Z_\beta
-\frac{1}{(1+|| Z||^2)^2}\sum \bar Z_\beta\de Z_\beta\wedge  Z_\gamma\deb\bar Z_\gamma
\right\} \\
&=\frac{ \ii}{2}\left\{\frac{1}{1+|| Z||^2}\sum \de Z_\beta\wedge\deb\bar Z_\beta
-\frac{1}{(1+|| Z||^2)^2}\de|| Z||^2\wedge \deb|| Z||^2\right\}
 \;
\end{align*}
(cf.~Example 4.5, page 189 of \cite{Wel80}). 
On the other hand, using \eqref{eq:pij}:
\begin{align*}
\de p_{ij} &=\frac{1}{1+|| Z||^2}\bar Z_i\de Z_j-
\frac{1}{(1+|| Z||^2)^2}\bar Z_i Z_j\de|| Z||^2 \;,\\
\deb p_{ji} &=\frac{1}{1+|| Z||^2} Z_i\deb\bar Z_j-
\frac{1}{(1+|| Z||^2)^2}\bar Z_j Z_i\deb|| Z||^2 \;,
\end{align*}
where we set $ Z_i=1$ if $i=\alpha$.
One easily checks that:
$$
\kahler=\frac{ \ii }{2}\sum\nolimits_{ij}\de p_{ij} \wedge\deb p_{ji} \;.
$$
In our notations for $n=2$, using \eqref{eq:useful} with $q=1$,
we get
\begin{equation}\label{eq:kahlerq}
\kahler=\frac{ \ii }{2}\,\omega_{q=1} \;.
\end{equation}


\subsection{Deconstructing instantons on classical $\CP^2$} 
Let us work on the chart $U_1$ with coordinates $Z_2=z_2/z_1$ and $Z_3=z_3/z_1$.
In the notations of \cite{Hab92}, the homogeneous coordinates there are
$[T_0,T_1,T_2]=[z_1,z_3,z_2]$, and the local coordinates $z_1,z_2$ 
are our $Z_3,Z_2$.

For any $0\leq t<1$, on the direct sum of the tautological bundle with its dual there is an ASD connection with connection one-form given
by (cf.~\cite{Gro90}):
$$
\omega_t=
\frac{1}{1+||Z||^2-t^2}\begin{pmatrix}
\frac{1}{2}(\deb||Z||^2-\de||Z||^2)
&
t(Z_3\dd Z_2-Z_2\dd Z_3) \\[4pt]
* &
*
\end{pmatrix} \;.
$$
where $||Z||^2:=Z_2\bar Z_2+Z_3\bar Z_3$ and the second row is obtained from the first one being $\omega_t$ traceless and anti-hermitian.

Using the parametrization $t=\sin 2\theta$, with $0\leq\theta<\pi/4$, the matrix of functions   
$$
\Psi:=
\frac{1}{\sqrt{(\cos 2\theta)^2+||Z||^2}}
\left(\begin{array}{c|c}
\cos 2\theta & 0 \\
\cos\theta\,\bar Z_2 & -\sin\theta\,Z_3 \\
\cos\theta\,\bar Z_3 & \sin\theta\,Z_2 \\
\hline
0 & \cos 2\theta \\
\sin\theta\,\bar Z_3 & \cos\theta\,Z_2 \\
\sin\theta\,\bar Z_2 & -\cos\theta\,Z_3
\end{array}\right) \;
$$
is normalized, that is $\Psi^\dag\Psi=1_2$. One checks that $\omega_t=\Psi^\dag\dd\Psi$; hence, $\omega_t$ is the connection one-form of the Grassmannian connection of the projection $P:=\Psi\Psi^\dag$.

Notice that $1=\sum_iz_iz_i^*=z_1z_1^*(1+||Z||^2)$,
so $1+||Z||^2=p_{11}^{-1}$ and
$$
\Psi=
\frac{1}{\sqrt{1-t^2p_{11}}}\begin{pmatrix}[rr]
\cos 2\theta\,z_1^* & 0 \hspace{6mm} \\
\cos\theta\,z_2^* & -\sin\theta\,z_3 \\
\cos\theta\,z_3^* & \sin\theta\,z_2 \\
0 \hspace{6mm} & \cos 2\theta\, z_1 \\
\sin\theta\,z_3^* & \cos\theta\,z_2 \\
\sin\theta\,z_2^* & -\cos\theta\,z_3
\end{pmatrix}
\begin{pmatrix}
z_1/|z_1| & 0 \\
0 & z_1^*/|z_1|
\end{pmatrix} \;.
$$
Since $\Psi$ has no limit for $z_1\to 0$, it cannot be extended
to $\mathrm{S}^5$. On the other hand, with a slight modification we can get
a matrix of functions on $\mathrm{S}^5$:
\begin{equation}\label{eq:decon}
\widetilde{\Psi}:=
\frac{1}{\sqrt{1-t^2p_{11}}}\begin{pmatrix}[rr]
\cos 2\theta\,z_1^* & 0 \hspace{6mm} \\
\cos\theta\,z_2^* & -\sin\theta\,z_3 \\
\cos\theta\,z_3^* & \sin\theta\,z_2 \\
0 \hspace{6mm} & \cos 2\theta\, z_1 \\
\sin\theta\,z_3^* & \cos\theta\,z_2 \\
\sin\theta\,z_2^* & -\cos\theta\,z_3
\end{pmatrix} \;.
\end{equation}
Since $\Psi\Psi^\dag=\widetilde{\Psi}\widetilde{\Psi}^\dag$, the matrix
$\widetilde{\Psi}$ (or rather, its analogue for $q\neq 1$) is our
starting point for the construction of instantons in the noncommutative
case of $\CP^2_q$. The Grassmannian connection associated to $P:=\widetilde{\Psi}\widetilde{\Psi}^\dag$, transported
on the equivariant module whose generators are the components of $\widetilde{\Psi}^\dag$,
is an instanton for $q=1$.


\appendix

\section{The proof of Theorem~\ref{thm}}\label{app:B}
We collect here the details of the proof of Theorem~\ref{thm} that is to say that the curvature
$$
F=\Psi^\dag(\dd P\wprod \dd P)\Psi \:, 
$$
with $\Psi$ the $2\times 6$ matrix in \eqref{eq:Psitilde} (we will omit the tilde) and $P=\Psi\Psi^\dag$, 
is ASD.

For this we shall need to take derivatives of elements which go beyond the ones in $\Aq$.
Now, the operators $\de$ and $\deb$ in \eqref{eq:2.5} can be extended in the obvious way
to maps \mbox{$\Oq\to\Oq^2$}, that we denote by the same symbols. However, $\de$ and $\deb$
are not derivations on $\Oq$ (if $q\neq 1$), but rather twisted ones coming from \eqref{glr},
nor their square is zero. For future use, we compute
\begin{equation}\label{eq:dezdebz}
\de z_j= \ii q^{-\frac{3}{2}}\binom{u^2_j}{u^1_j} \;,\qquad
\deb z_j^*= \ii q^{-\frac{3}{2}}\binom{q^{-\frac{1}{2}}(u^1_j)^*}{-q^{\frac{1}{2}}(u^2_j)^*} \;.
\end{equation}
Also, here and in the following we set $x:=p_{11}$ and call $f(x)$ the element
$$
f(x):=\frac{1}{\sqrt{1-t^2x}} \;.
$$

\begin{lemma}
The curvature can be written as
\begin{equation}\label{lemma:B1}
F=\big\{(\Psi^\dag\dd\Psi)Q\big\}^2+Q(\dd\Psi^\dag)\wprod (\dd\Psi)Q \;.
\end{equation}
with
\begin{equation}\label{eq:Q}
Q=\maa{ q^{\frac{1}{2}} & 0\; \\ 0\; & \;q^{-\frac{1}{2}} } \;.
\end{equation}
\end{lemma}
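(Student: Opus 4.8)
The plan is to differentiate the three defining relations $\Psi^\dag\Psi=1_2$, $P\Psi=\Psi$ and $\Psi^\dag P=\Psi^\dag$ and to substitute the resulting identities into $F=\Psi^\dag(\dd P\wprod\dd P)\Psi=(\Psi^\dag\dd P)\wprod\bigl((\dd P)\Psi\bigr)$. The point to watch is that the entries of $\Psi,\Psi^\dag$ lie in $\Oq$ and not in $\Aq$, so $\dd$ acts on them not by the naive Leibniz rule but by the twisted one coming from \eqref{glr}; that twist is precisely what produces the matrix $Q$ of \eqref{eq:Q}.

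First I would record how $\Psi$ transforms under the right action. Each matrix entry of $\Psi$ is a function of $p_{11}$ times either a $z_j^*$ (first column) or a $z_j$ (second column). Both $z_j$ and $z_j^*$ are fixed by $K_1$ and annihilated by $E_1,F_1$ under $\za$, while $z_j^*\za K_2=q^{-1/2}z_j^*$ and $z_j\za K_2=q^{1/2}z_j$; and $p_{11}$, being in $\Aq$, is fixed by $K_1,K_2$ and annihilated by $E_1,F_1$. Hence
$$
\Psi\za K_2=\Psi Q^{-1}\,,\quad \Psi^\dag\za K_2=Q\Psi^\dag\,,\quad \Psi\za K_1=\Psi\,,\quad \Psi^\dag\za K_1=\Psi^\dag\,,
$$
and $\Psi\za E_1=\Psi\za F_1=\Psi^\dag\za E_1=\Psi^\dag\za F_1=0$, with $Q$ as in \eqref{eq:Q}; consequently $P\za K_1=P$, $P\za K_2=P$ and $P\za E_1=P\za F_1=0$. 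Then I would verify that, on any product of matrix entries of $\Psi,\Psi^\dag$ and $\Aq$, the generalized Leibniz rule \eqref{glr} reduces to the purely $K_2$-twisted form
$$
\dd(ab)=(a\za K_2^{-1})(\dd b)+(\dd a)(b\za K_2)\;.
$$
Indeed, on expanding $\Delta(E_2),\Delta(E_2E_1)$ for $\de$ and $\Delta(F_2),\Delta(F_2F_1)$ for $\deb$, every term in which $E_1$ or $F_1$ acts on one of the two factors vanishes, and the remaining $K_1$-twists act as the identity.

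Applying this rule to $\Psi^\dag\Psi=1_2$, $P\Psi=\Psi$ and $\Psi^\dag P=\Psi^\dag$ (using $P\za K_2=P$) and rearranging gives
$$
(\dd\Psi^\dag)\Psi=-Q^{-1}(\Psi^\dag\dd\Psi)Q\,,\qquad
(\dd P)\Psi=(1-P)(\dd\Psi)Q\,,\qquad
\Psi^\dag(\dd P)=Q(\dd\Psi^\dag)(1-P)\,.
$$
Finally I would assemble $F$. By the last two identities, $(1-P)^2=1-P$, and the fact that the function-valued matrices $1-P$ and $\Psi$ may be moved across $\wprod$,
$$
F=(\Psi^\dag\dd P)\wprod\bigl((\dd P)\Psi\bigr)=Q(\dd\Psi^\dag)\wprod(1-P)(\dd\Psi)Q
=Q(\dd\Psi^\dag)\wprod(\dd\Psi)Q-Q(\dd\Psi^\dag)\wprod P(\dd\Psi)Q\,;
$$
in the last term $P(\dd\Psi)=\Psi(\Psi^\dag\dd\Psi)$, so moving $\Psi$ across $\wprod$ and invoking the first identity,
$$
Q(\dd\Psi^\dag)\wprod P(\dd\Psi)Q=Q\bigl[(\dd\Psi^\dag)\Psi\bigr]\wprod(\Psi^\dag\dd\Psi)Q=-\bigl\{(\Psi^\dag\dd\Psi)Q\bigr\}^2\,,
$$
whence $F=\bigl\{(\Psi^\dag\dd\Psi)Q\bigr\}^2+Q(\dd\Psi^\dag)\wprod(\dd\Psi)Q$, which is \eqref{lemma:B1}. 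The main obstacle is the reduction of \eqref{glr} to the above $K_2$-twisted Leibniz rule: one has to track with care the cross terms coming from $\Delta(E_2E_1)$ and $\Delta(F_2F_1)$, check that they drop out on the elements at hand, and be consistent about which of the two factors each twist $K_2^{\pm1}$ — equivalently $Q^{\pm1}$ — attaches to. Everything else is linear algebra with the projection $P=\Psi\Psi^\dag$.
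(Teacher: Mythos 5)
Your proposal is correct and follows essentially the same route as the paper: both arguments hinge on the $K_2$-twisted Leibniz rule $\dd(ab)=(\dd a)(b\za K_2)+(a\za K_2^{-1})(\dd b)$ (valid here since $E_1,F_1$ annihilate and $K_1$ fixes the relevant elements), which produces the matrix $Q$ via $\Psi\za K_2^{-1}=\Psi Q$, $\Psi^\dag\za K_2=Q\Psi^\dag$, followed by elementary algebra with the projection $P=\Psi\Psi^\dag$. The only (cosmetic) difference is that you differentiate $\Psi^\dag\Psi=1_2$, $P\Psi=\Psi$ and $\Psi^\dag P=\Psi^\dag$, whereas the paper differentiates $P=\Psi\Psi^\dag$ directly and discards the non-contributing term through $\Psi^\dag(\dd P)P=0$.
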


\begin{proof}
Using
$$
\Delta(E_i)=E_i\otimes K_i+K_i^{-1}\otimes E_i\;, \quad 
\Delta(F_i)=F_i\otimes K_i+K_i^{-1}\otimes F_i\;,
$$
and $z_i\za K_1=z_i$, $z_i\za K_2=q^{\frac{1}{2}}z_i$, one finds
$$
\dd P=(\dd\Psi)(\Psi^\dag\za K_2)+(\Psi\za K_2^{-1})(\dd\Psi^\dag) \;.
$$
Since
$$
\Psi\za K_2^{-1}=\Psi\maa{ q^{\frac{1}{2}} & 0\; \\ 0\; & \;q^{-\frac{1}{2}} } \;,\qquad
\Psi^\dag\za K_2=\maa{ q^{\frac{1}{2}} & 0\; \\ 0\; & \;q^{-\frac{1}{2}} }\Psi^\dag \;, 
$$
we get
\begin{equation}\label{eq:again}
\dd P=(\dd\Psi)Q\Psi^\dag+\Psi Q(\dd\Psi^\dag) \;,
\end{equation}
with $Q$ as in \eqref{eq:Q}.
Since $\Psi=P\Psi$, then
$$
(\dd P)\Psi=(\dd\Psi)Q+P\Psi Q(\dd\Psi^\dag)\Psi \;.
$$
The second term gives no contribution to the curvature $F$, due to
$$
\Psi^\dag(\dd P)\wprod P\Psi Q(\dd\Psi^\dag)\Psi=
\Psi^\dag(\dd P)P\wprod \Psi Q(\dd\Psi^\dag)\Psi \;,
$$
but $(\dd P)P=(1-P)\dd P$, and then $\Psi^\dag(\dd P)P=0$.
Hence
$$
F=\Psi^\dag(\dd P)\wprod (\dd\Psi)Q \;.
$$
Using again \eqref{eq:again} we get the thesis.
\end{proof}

\noindent
We now start computing the many pieces in \eqref{lemma:B1}.

\begin{lemma}
Since $z_i\za F_2=z_i^*\za E_2=0$, we have
$$
\dd\psi=\deb\psi \;,\qquad \dd\varphi=\de\varphi \;,
$$
and a straightforward computation gives, 
\begin{equation}\label{eq:xxx}
\dd\Psi=(\deb\psi , 0)f(x) +(0,\de\varphi)f(q^4x) +q^{\frac{1}{2}}(\psi , 0)\dd f(x) +q^{-\frac{1}{2}}(0,\varphi)\dd f(q^4x) \;. 
\end{equation}
\end{lemma}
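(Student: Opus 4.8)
The plan is to prove the two displayed identities in turn: the first is immediate, and the second is a routine (if fiddly) application of the twisted Leibniz rule underlying \eqref{eq:2.5}.

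For $\dd\psi=\deb\psi$ and $\dd\varphi=\de\varphi$ I would show that $\de\psi=0$ and $\deb\varphi=0$. By \eqref{eq:2.5} the two components of $\de a$ are proportional to $a\za E_2$ and to $a\za E_2E_1=(a\za E_2)\za E_1$; since every entry of $\psi$ is a scalar multiple of some $z_i^*$ and $z_i^*\za E_2=0$ by hypothesis, both components of $\de z_i^*$ vanish, hence $\de\psi=0$. Dually, the components of $\deb a$ are proportional to $a\za F_2F_1=(a\za F_2)\za F_1$ and to $a\za F_2$, and since the entries of $\varphi$ are scalar multiples of the $z_i$ with $z_i\za F_2=0$, we get $\deb z_i=0$ and $\deb\varphi=0$.

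For \eqref{eq:xxx} I would compute $\dd\Psi$ one column at a time, starting from $\Psi=(\psi,0)f(x)+(0,\varphi)f(q^4x)$ with $f(p_{11})$ a (completed) element of $\Aq$. The subtlety is that $\de,\deb$ are not derivations on $\Oq$ but only twisted ones, the twist dictated by $\Delta(E_i)=E_i\otimes K_i+K_i^{-1}\otimes E_i$ and $\Delta(F_i)=F_i\otimes K_i+K_i^{-1}\otimes F_i$. Expanding $(z_i^*f)\za E_2$, $(z_i^*f)\za E_2E_1$, $(z_if)\za F_2$, and so on with these coproducts, and using (i) the $\Kq$-invariance of $\Aq$, so that $f(p_{11})\za E_1=f(p_{11})\za F_1=0$ and $f(p_{11})\za K_1=f(p_{11})$; (ii) $f(p_{11})\za K_2=f(p_{11})$, which holds because $\za K_2$ is an algebra morphism with $p_{11}\za K_2=(z_1^*\za K_2)(z_1\za K_2)=q^{-\frac12}z_1^*\,q^{\frac12}z_1=p_{11}$; and (iii) the weights $z_i^*\za K_2^{-1}=q^{\frac12}z_i^*$, $z_i\za K_2^{-1}=q^{-\frac12}z_i$, $z_i^*\za K_1^{-1}=z_i^*$, $z_i\za K_1^{-1}=z_i$, I expect every would-be extra term to carry a vanishing factor ($z_i^*\za E_2$, $z_i\za F_2$, $z_i\za E_1$, $z_i\za F_1$, $f\za E_1$ or $f\za F_1$), leaving exactly
$$
\dd(\psi f(x))=(\dd\psi)f(x)+q^{\frac12}\psi\,\dd f(x)\;,\qquad
\dd(\varphi f(q^4x))=(\dd\varphi)f(q^4x)+q^{-\frac12}\varphi\,\dd f(q^4x)\;.
$$
Substituting $\dd\psi=\deb\psi$ and $\dd\varphi=\de\varphi$ from the first step then gives \eqref{eq:xxx}, the twist factors accounting for the $q^{\pm\frac12}$ there.

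The one genuine difficulty is bookkeeping: one must keep track of which right actions annihilate $z_i$, $z_i^*$ and $f(p_{11})$, and of the exact $q$-powers coming from the $K_2$-weights, in order to confirm that precisely the factors $q^{\pm\frac12}$ survive and that no further correction terms appear. Conceptually there is nothing beyond the facts that $\psi$ and $\varphi$ are built from lowest-weight sphere coordinates and that $f(p_{11})$ lies in the $\Kq$-invariant subalgebra, on which $\dd$ restricts to an honest derivation.
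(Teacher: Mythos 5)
Your proposal is correct and follows essentially the same route the paper intends: the paper dismisses \eqref{eq:xxx} as ``a straightforward computation'', and the computation it has in mind is exactly your coproduct bookkeeping with $\Delta(E_2),\Delta(F_2)$, the $K_2$-weights $z_i\za K_2^{-1}=q^{-\frac12}z_i$, $z_i^*\za K_2^{-1}=q^{\frac12}z_i^*$, the invariances $f(p_{11})\za K_2=f(p_{11})$, $f\za E_1=f\za F_1=0$, and the vanishing of $z_i\za F_2$, $z_i^*\za E_2$ (the same scheme used in the proofs of Lemma~\ref{lemma:3.1} and Lemma~\ref{lemma:B1}). Carrying out your plan one checks, e.g., $(z_i^*f)\za E_2=q^{\frac12}z_i^*(f\za E_2)$ and $(z_i^*f)\za F_2=(z_i^*\za F_2)f+q^{\frac12}z_i^*(f\za F_2)$, so the only surviving corrections are the factors $q^{\pm\frac12}$ in front of $\dd f$, exactly as in \eqref{eq:xxx}.
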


\begin{lemma}\label{lemma:B3}
\begin{align*}
\psi^\dag\deb\psi &=q^{\frac{1}{2}}\deb f(x)^{-2}=-q^{\frac{1}{2}}t^2\deb x  \;, &
\varphi^\dag\de\varphi &=q^{-\frac{1}{2}}\de f(q^4x)^{-2}=-q^{-\frac{1}{2}}t^2q^4\de x \;,
\\
(\de\psi^\dag)\psi &=q^{\frac{1}{2}}\de f(x)^{-2}=-q^{\frac{1}{2}}t^2\de x \;, &
(\deb\varphi^\dag)\varphi &=q^{-\frac{1}{2}}\deb f(q^4x)^{-2}=-q^{-\frac{1}{2}}t^2q^4\deb x  \;.
\end{align*}
\end{lemma}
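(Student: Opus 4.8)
The plan is to prove all four identities by one direct computation that exploits the very explicit shape \eqref{pf} of $\psi$ and $\varphi$. The key observation is that every entry of $\psi$ is a \emph{real} scalar multiple of some $z_j^{*}$ and every entry of $\varphi$ a real scalar multiple of some $z_j$ (for $\varphi$ these scalars carry the inserted $q$-factors, but remain real). Writing $\psi_i=c_i z_{j(i)}^{*}$ and $\varphi_i=d_i z_{j(i)}$ with $c_i,d_i\in\R$, each of the four bilinear expressions collapses to a $\theta$-weighted sum over $j=1,2,3$: $\psi^{\dag}\deb\psi=\sum_j w_j\, z_j(\deb z_j^{*})$, $(\de\psi^{\dag})\psi=\sum_j w_j\,(\de z_j)z_j^{*}$, $\varphi^{\dag}\de\varphi=\sum_j v_j\, z_j^{*}(\de z_j)$ and $(\deb\varphi^{\dag})\varphi=\sum_j v_j\,(\deb z_j^{*})z_j$. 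Reading the coefficients off \eqref{pf} and using $\cos^{2}\theta+\sin^{2}\theta=1$, one finds $(w_1,w_2,w_3)=(\cos^{2}2\theta,1,1)$ and $(v_1,v_2,v_3)=(q^{4}\cos^{2}2\theta,q^{2},1)$, so that $v_2=q^{2(3-2)}$ and $v_3=q^{2(3-3)}$.

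Next I would write $\cos^{2}2\theta=1-t^{2}$ and split each sum into a ``bulk'' $q$-weighted sum over $j=1,2,3$ \emph{plus} $-t^{2}$ (respectively $-q^{4}t^{2}$) times the $j=1$ term alone. Substituting the explicit formul{\ae} \eqref{eq:dezdebz} for $\de z_j$ and $\deb z_j^{*}$, the bulk sum is in each case exactly one of the orthogonality relations of Lemma~\ref{lemma:ortho}, hence vanishes: for $\psi^{\dag}\deb\psi$ it is $\sum_j u^{3}_{j}(u^{b}_{j})^{*}=\delta_{3,b}=0$ with $b=1,2$, for $(\de\psi^{\dag})\psi$ it is $\sum_j u^{b}_{j}(u^{3}_{j})^{*}=\delta_{b,3}=0$, for $\varphi^{\dag}\de\varphi$ it is $\sum_j q^{2(3-j)}(u^{3}_{j})^{*}u^{b}_{j}=\delta_{3,b}=0$, and for $(\deb\varphi^{\dag})\varphi$ it is $\sum_j q^{2(b-j)}(u^{b}_{j})^{*}u^{3}_{j}=\delta_{b,3}=0$ after pulling out the fixed power $q^{2(3-b)}$. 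What survives is the $j=1$ contribution. For $\psi^{\dag}\deb\psi$ and $\varphi^{\dag}\de\varphi$ the relevant quantities $z_1(\deb z_1^{*})=q^{\frac{1}{2}}\deb x$ and $z_1^{*}(\de z_1)=q^{-\frac{1}{2}}\de x$ are exactly those already obtained inside the proof of Lemma~\ref{lemma:3.1}; the two companion quantities $(\de z_1)z_1^{*}=q^{\frac{1}{2}}\de x$ and $(\deb z_1^{*})z_1=q^{-\frac{1}{2}}\deb x$ are obtained by the very same argument, applying the twisted Leibniz rule \eqref{glr} to $x=p_{11}=z_1 z_1^{*}$ (using $[z_1^{*},z_1]=0$ from \eqref{eq:sphere}), together with $z_i^{*}\za E_2=z_i\za F_2=0$ (hence $\de z_j^{*}=\deb z_j=0$) and the weights $z_1\za K_2=q^{\frac{1}{2}}z_1$, $z_1^{*}\za K_2=q^{-\frac{1}{2}}z_1^{*}$. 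This gives $\psi^{\dag}\deb\psi=-q^{\frac{1}{2}}t^{2}\deb x$, $(\de\psi^{\dag})\psi=-q^{\frac{1}{2}}t^{2}\de x$, $\varphi^{\dag}\de\varphi=-q^{-\frac{1}{2}}t^{2}q^{4}\de x$ and $(\deb\varphi^{\dag})\varphi=-q^{-\frac{1}{2}}t^{2}q^{4}\deb x$.

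To conclude, since $f(x)^{-2}=1-t^{2}x$ and $f(q^{4}x)^{-2}=1-t^{2}q^{4}x$ one has $\deb f(x)^{-2}=-t^{2}\deb x$, $\de f(x)^{-2}=-t^{2}\de x$, $\de f(q^{4}x)^{-2}=-t^{2}q^{4}\de x$ and $\deb f(q^{4}x)^{-2}=-t^{2}q^{4}\deb x$, and substituting yields exactly the four stated formul{\ae}. I expect the only genuinely delicate point to be the $q$-power bookkeeping: one must match the weights $w_j,v_j$ against the exponents occurring in the \emph{correct} one of the four orthogonality identities of Lemma~\ref{lemma:ortho}, i.e.\ check that once the $j=1$ term is removed the bulk sum really is an orthogonality relation and not one off by a power of $q$ (as happens for $(\deb\varphi^{\dag})\varphi$, where an extra $q^{2(3-b)}$ must be pulled out). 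Once the $q$-factors built into $\psi$ and $\varphi$ in \eqref{pf} are tracked carefully --- which is precisely why those factors were inserted in the first place --- everything falls into line.
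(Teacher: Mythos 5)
Your argument is correct, but it takes a genuinely different route from the paper. The paper's proof is a two-line differentiation of the normalizations already recorded in \eqref{eq:psiphi}: since the entries of $\psi^\dag$ (multiples of the $z_j$, killed by $\za F_2$) satisfy $\deb\psi^\dag=0$, the twisted Leibniz rule \eqref{glr} collapses $\deb(\psi^\dag\psi)$ to the single term $q^{-\frac{1}{2}}\psi^\dag\deb\psi$, and then $\psi^\dag\psi=f(x)^{-2}=1-t^2x$ gives the first identity at once; the other three are the obvious analogues with $\de\psi^\dag$, $\varphi^\dag\varphi=1-t^2q^4x$, etc. You instead recompute the left-hand sides from scratch: expanding entrywise, the $\theta$-dependence reduces to the weights $(\cos^2 2\theta,1,1)$ and $(q^4\cos^2 2\theta,q^2,1)$, the $q$-weighted ``bulk'' sums are exactly the orthogonality relations of Lemma~\ref{lemma:ortho} (including the correct extraction of $q^{2(3-b)}$ in the $(\deb\varphi^\dag)\varphi$ case) and vanish, and the surviving $j=1$ defect is evaluated via $z_1\deb z_1^*=q^{\frac{1}{2}}\deb x$, $z_1^*\de z_1=q^{-\frac{1}{2}}\de x$ from the proof of Lemma~\ref{lemma:3.1} together with the companions $(\de z_1)z_1^*=q^{\frac{1}{2}}\de x$, $(\deb z_1^*)z_1=q^{-\frac{1}{2}}\deb x$ --- which, as you say, follow by the same twisted-Leibniz argument (one also uses $z_1^*\za E_1=0$, $z_1^*\za K_1=z_1^*$), or even more directly by comparing \eqref{eq:dezdebz} with \eqref{eq:ddPij} and \eqref{eq:ddfakePij}. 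What the paper's route buys is brevity and reuse of \eqref{eq:psiphi}; what yours buys is transparency of the mechanism (orthogonality plus a $t^2$-correction concentrated at $j=1$), at the cost of redoing, in differentiated form, the computation that established \eqref{eq:psiphi} in the first place and of supplying the two companion identities not stated explicitly in the paper. All the $q$-power bookkeeping in your proposal checks out.
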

\begin{proof}
Since $\deb\psi^\dag=0$, one has $\deb(\psi^\dag\psi)=q^{-\frac{1}{2}}\psi^\dag\deb\psi$. From $\psi^\dag\psi=f(x)^{-2}=1-t^2x$
the first equation follows.
Similarly, from $\de(\varphi^\dag\varphi)=q^{\frac{1}{2}}\varphi^\dag\de\varphi$ and $\varphi^\dag\varphi=f(q^4x)^{-2}=1-t^2q^4x$
one gets the second one. The remaining equations are proved similarly.
\end{proof}

\begin{lemma}\label{cor:B4}
$$
(\Psi^\dag\dd\Psi)Q=
\begin{pmatrix}
qf(x)^{-1}\de f(x)+qf(x)\deb f(x)^{-1}  &
q^{-\frac{1}{2}}f(x)(\psi^\dag\de\varphi)f(q^4x) \\[5pt]
q^{\frac{1}{2}}f(q^4x)(\varphi^\dag\deb\psi)f(x) &
q^{-1}f(q^4x)^{-1}\deb f(q^4x)+q^{-1}f(q^4x)\de f(q^4x)^{-1}
\end{pmatrix}
$$
\end{lemma}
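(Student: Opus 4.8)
The plan is to expand both factors in terms of $\psi$, $\varphi$ and the scalar function $f$, carry out the $2\times 2$ matrix multiplication, and then reduce each of the four entries using only \eqref{eq:psiphi} and Lemma~\ref{lemma:B3}. Write $\Psi^\dag$ in block form, its first block being $f(x)\psi^\dag$ and its second $f(q^4x)\varphi^\dag$; this is legitimate because $x=p_{11}$ is $K_2$-invariant and $f$ is real, so $f(x),f(q^4x)$ are self-adjoint and invariant under $\za K_2$. Inserting \eqref{eq:xxx} for $\dd\Psi$, the $(i,j)$ entry of $\Psi^\dag\dd\Psi$ becomes a sum of one term built from the first two summands of \eqref{eq:xxx} --- of the shape $f(\cdot)\,(\psi^\dag\deb\psi)\,f(\cdot)$, $f(\cdot)\,(\psi^\dag\de\varphi)\,f(\cdot)$, $f(\cdot)\,(\varphi^\dag\deb\psi)\,f(\cdot)$ or $f(\cdot)\,(\varphi^\dag\de\varphi)\,f(\cdot)$ --- and one term built from the last two summands --- of the shape $f(\cdot)\,(\psi^\dag\psi)\,\dd f(\cdot)$, $f(\cdot)\,(\psi^\dag\varphi)\,\dd f(\cdot)$, $f(\cdot)\,(\varphi^\dag\psi)\,\dd f(\cdot)$ or $f(\cdot)\,(\varphi^\dag\varphi)\,\dd f(\cdot)$. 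Right multiplication by $Q$ of \eqref{eq:Q} contributes only the overall factors $q^{\pm\frac12}$ in the statement, so the content is the reduction of these six constituent pieces.

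For the off-diagonal entries the reduction is immediate: by \eqref{eq:psiphi} one has $\psi^\dag\varphi=0$ and, taking adjoints, $\varphi^\dag\psi=0$; this annihilates the $\dd f$-type term and leaves $(\Psi^\dag\dd\Psi)_{12}=f(x)(\psi^\dag\de\varphi)f(q^4x)$ and $(\Psi^\dag\dd\Psi)_{21}=f(q^4x)(\varphi^\dag\deb\psi)f(x)$ (using also $\dd\psi=\deb\psi$ and $\dd\varphi=\de\varphi$, recorded above \eqref{eq:xxx}). Multiplying on the right by $Q$ produces the $q^{-\frac12}$ and $q^{\frac12}$ prefactors of the stated off-diagonal entries.

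For the diagonal entries I would use $\psi^\dag\psi=f(x)^{-2}$ and $\varphi^\dag\varphi=f(q^4x)^{-2}$ from \eqref{eq:psiphi} to collapse the $\dd f$-type term --- e.g.\ $f(x)(\psi^\dag\psi)\dd f(x)=f(x)^{-1}\dd f(x)$, since $f(x)$ and $f(x)^{-2}$ commute --- and Lemma~\ref{lemma:B3} to rewrite $\psi^\dag\deb\psi=q^{\frac12}\deb f(x)^{-2}$ and $\varphi^\dag\de\varphi=q^{-\frac12}\de f(q^4x)^{-2}$. After right multiplication by $Q$, the holomorphic part of the $(1,1)$-entry is already $q\,f(x)^{-1}\de f(x)$, and symmetrically the antiholomorphic part of the $(2,2)$-entry is already $q^{-1}f(q^4x)^{-1}\deb f(q^4x)$, as in the statement. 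What is left is to check that the antiholomorphic part of $(\Psi^\dag\dd\Psi)_{11}$, namely $q^{\frac12}\bigl(f(x)(\deb f(x)^{-2})f(x)+f(x)^{-1}\deb f(x)\bigr)$, equals $q^{\frac12}f(x)\deb f(x)^{-1}$, and symmetrically for the holomorphic part of $(\Psi^\dag\dd\Psi)_{22}$. For this one uses that $\de$ and $\deb$ obey the ordinary Leibniz rule on the commutative subalgebra generated by $x$ (as $x\in\Aq$ is $K_2$-invariant): from $\deb\bigl(f(x)^{-1}f(x)\bigr)=0$ one gets $\deb f(x)^{-1}=-f(x)^{-1}(\deb f(x))f(x)^{-1}$, whence $\deb f(x)^{-2}=-f(x)^{-1}(\deb f(x))f(x)^{-2}-f(x)^{-2}(\deb f(x))f(x)^{-1}$; substituting and simplifying, the bracket telescopes to $-(\deb f(x))f(x)^{-1}=f(x)\deb f(x)^{-1}$, as wanted.

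The one genuine subtlety --- and the place where care is needed --- is the bookkeeping of the twist by $K_2$: it is responsible both for the factors $q^{\pm\frac12}$ in \eqref{eq:xxx} and for the matrix $Q$ in \eqref{lemma:B1}, and it interacts with the non-commutativity of $f(x)$ with $\de x$ and $\deb x$, so one must track throughout on which side each factor of $f$ sits. Everything else is routine algebra with \eqref{eq:psiphi}, \eqref{eq:xxx} and Lemma~\ref{lemma:B3}.
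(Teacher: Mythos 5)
Your proposal is correct and follows essentially the same route as the paper: expand $\Psi^\dag\dd\Psi$ blockwise using \eqref{eq:xxx}, kill the off-diagonal $\dd f$-terms with $\psi^\dag\varphi=0$, and reduce the diagonal entries via $\psi^\dag\psi=f(x)^{-2}$, $\varphi^\dag\varphi=f(q^4x)^{-2}$, Lemma~\ref{lemma:B3} and the (untwisted) Leibniz rule for $\de,\deb$ on functions of $x$. The only difference is cosmetic: you telescope via $\deb f^{-1}=-f^{-1}(\deb f)f^{-1}$, while the paper cancels $f^{-1}\deb f+(\deb f^{-1})f=\deb(f^{-1}f)=0$ directly, which are equivalent manipulations.
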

\begin{proof}
Let $M:=(\Psi^\dag\dd\Psi)Q$. From \eqref{eq:xxx} and 
$$
\Psi^\dag=\binom{f(x)\psi^\dag}{f(q^4x)\varphi^\dag} \:,
$$
one easily computes the components $M_{12}$ and $M_{21}$, as well as:
\begin{align*}
q^{-1}M_{11} &=f(x)^{-1}\dd f(x)+f(x)q^{-\frac{1}{2}}(\psi^\dag\deb\psi)f(x) \;, \\
qM_{22} &=f(q^4x)^{-1}\dd f(q^4x)+f(q^4x)(q^{\frac{1}{2}}\varphi^\dag\de\varphi)f(q^4x) \;,
\end{align*}
where we used the normalizations $\psi^\dag\psi=f(x)^{-2}$ and $\varphi^\dag\varphi=f(q^4x)^{-2}$. Using the first
equation in Lemma~\ref{lemma:B3} and some algebraic manipulation with the Leibniz rule, we get
\begin{align*}
q^{-1}M_{11} &=f(x)^{-1}\dd f(x)+f(x)(\deb f(x)^{-2})f(x) \\
&=f(x)^{-1}\de f(x)+f(x)^{-1}\deb f(x)+(\deb f(x)^{-1})f(x)+f(x)\deb f(x)^{-1} \\
&=f(x)^{-1}\de f(x)+f(x)\deb f(x)^{-1} \;.
\end{align*}
Note that in the second line we used that
$$
f(x)^{-1}\deb f(x)+(\deb f(x)^{-1})f(x)=\deb\big\{f(x)^{-1}f(x)\big\}=\deb 1=0 \;.
$$
In the same way, using the second equation in Lemma~\ref{lemma:B3} and some algebraic manipulation with the Leibniz rule, we get
\begin{align*}
qM_{22} &=f(q^4x)^{-1}\dd f(q^4x)+f(q^4x)(\de f(q^4x)^{-2})f(q^4x) \\
&=f(q^4x)^{-1}\deb f(q^4x)+f(q^4x)^{-1}\de f(q^4x)+(\de f(q^4x)^{-1})f(q^4x)+f(q^4x)\de f(q^4x)^{-1} \\
&=f(q^4x)^{-1}\deb f(q^4x)+f(q^4x)\de f(q^4x)^{-1} \;.
\end{align*}
This concludes the proof.
\end{proof}

Let us denote by $\omega^+$ the self-dual part of a $(1,1)$-form $\omega$. Recall that it belongs to a subspace of $\Oq\otimes\C^3$:
we will write its components in a column.

\begin{lemma}\label{lemma:1}
With $\psi$ and $\varphi$ the vector-valued functions in \eqref{pf} it holds that
\begin{align}\label{eq:yyy}
\psi^\dag\de\varphi & = - q (\de\psi^\dag)\varphi = t \phi  \:, \nonumber \\
\varphi^\dag\deb\psi & = - q^{-1} (\deb\varphi^\dag)\psi = t q^{-1} \phi^* \;, 
\end{align}
where $\phi$ is the particular element of $L_2\otimes_{\Aq}\Omega^1$ given in \eqref{eq:phi}.
\end{lemma}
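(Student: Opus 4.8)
The plan is to prove \eqref{eq:yyy} by a direct computation, reducing the row-by-column products to the explicit one-forms \eqref{eq:dezdebz} and then recognising $\phi$ via the expression obtained in the proof of Lemma~\ref{prop:3.4}. As recorded above, $z_i\za F_2=z_i^*\za E_2=0$, so $\dd\varphi=\de\varphi$ and $\dd\psi=\deb\psi$; hence $\de\varphi$ (resp.\ $\deb\psi$) is obtained by applying $\de$ (resp.\ $\deb$) entrywise to the column $\varphi$ (resp.\ $\psi$) of \eqref{pf}, the real scalars $\cos 2\theta,\cos\theta,\sin\theta$ passing freely through. Expanding $\psi^\dag\de\varphi=\sum_{j}(\psi^\dag)_j\,\de\varphi_j$ and $(\de\psi^\dag)\varphi=\sum_j\de\bigl((\psi^\dag)_j\bigr)\,\varphi_j$, the six terms of each sum pair up, and collecting the trigonometric coefficients by means of $2\sin\theta\cos\theta=\sin 2\theta=t$ they collapse to
\[
\psi^\dag\de\varphi=t\bigl(q\,z_3\,\de z_2-z_2\,\de z_3\bigr),\qquad
(\de\psi^\dag)\varphi=t\bigl(q\,(\de z_3)\,z_2-(\de z_2)\,z_3\bigr).
\]

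I would then substitute $z_j=u^3_j$ together with $\de z_j=\ii q^{-3/2}(u^2_j,u^1_j)^t$ from \eqref{eq:dezdebz}. In $(\de\psi^\dag)\varphi$ the monomials come out already in the order $u^i_j\,u^3_k$ that appears in $\phi$, so one reads off $q\,(\de z_3)z_2-(\de z_2)z_3=-q^{-1}\phi$ at once, hence $-q(\de\psi^\dag)\varphi=t\phi$. In $\psi^\dag\de\varphi$ the factor $z=u^3$ stands on the left, producing monomials $u^3_k\,u^i_j$, so a short reordering using the commutation relations of $\Oq$ is required: from $u^2_2u^3_3-u^3_3u^2_2=(q-q^{-1})u^2_3u^3_2$ and $u^2_3u^3_2=u^3_2u^2_3$ (and the same relations with upper index $1$) one obtains $q\,u^3_3u^2_2-u^3_2u^2_3=q\bigl(u^2_2u^3_3-q\,u^2_3u^3_2\bigr)$, and likewise for the lower component, so that
\[
q\,z_3\,\de z_2-z_2\,\de z_3=\ii q^{-1/2}\,(u^2_2u^3_3-q\,u^2_3u^3_2,\ u^1_2u^3_3-q\,u^1_3u^3_2)^t=\phi ,
\]
the last equality being exactly the intermediate identity in the proof of Lemma~\ref{prop:3.4}. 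This establishes the first line of \eqref{eq:yyy}.

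For the second line I would apply the $*$-involution to the first. The only delicate point is that on $\Oq$ --- in contrast with $\Aq$ --- the Dolbeault operators obey $(\de b)^*=-\deb(b^*)$ rather than $(\de b^*)^*=\deb b$, a sign read off immediately from \eqref{eq:dezdebz} and the involution on $\Omega^{1,0}$. Using this, together with $(a\omega)^*=\omega^*a^*$ for degree-zero $a$, with $t\in\R$, and with $\phi^*=\ii(u^2_1,u^1_1)^t$ from Lemma~\ref{prop:3.4}, one computes $(\psi^\dag\de\varphi)^*=-(\deb\varphi^\dag)\psi$ and $\bigl(-q(\de\psi^\dag)\varphi\bigr)^*=q\,\varphi^\dag\deb\psi$; applying $*$ to the two (already proved) halves of the first line then gives $\varphi^\dag\deb\psi=-q^{-1}(\deb\varphi^\dag)\psi=tq^{-1}\phi^*$.

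I expect the only real difficulty to be bookkeeping: keeping the powers of $q$ straight through the reordering step for $\psi^\dag\de\varphi$, and fixing the sign in $(\de b)^*=-\deb(b^*)$ on the enlarged algebra. Nothing here is conceptually deep; in fact the second line of \eqref{eq:yyy} could equally be proved by a direct computation parallel to the one for the first line, and that computation is even slightly shorter, since no reordering is needed there.
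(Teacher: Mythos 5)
Your computation is correct, and for the first line of \eqref{eq:yyy} it is essentially the paper's own proof: the same reduction $\psi^\dag\de\varphi=t(q z_3\de z_2-z_2\de z_3)$ followed by the reordering via the $\Oq$ relations and \eqref{eq:star} to recognise $\phi$ through the intermediate identity of Lemma~\ref{prop:3.4}; you additionally spell out the middle equality $-q(\de\psi^\dag)\varphi=t\phi$, which the paper leaves implicit. Where you genuinely diverge is the second line: the paper proves it by a second, parallel direct computation, namely $\varphi^\dag\deb\psi=t(qz_2^*\deb z_3^*-z_3^*\deb z_2^*)=tq^{-1}\phi^*$ using \eqref{eq:dezdebz} and \eqref{eq:star} (no reordering needed there, as you note), whereas you obtain it by conjugating the first line. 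Your route works and is in fact the same mechanism the paper uses to pass from $\phi$ to $\phi^*$ in Lemma~\ref{prop:3.4}; its only cost is that you must commit to how $*$ interacts with $\de,\deb$ on elements of $\Sq$, while the direct computation sidesteps this entirely.

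One caution on your aside: the sign rule you invoke is not a contrast between $\Oq$ and $\Aq$. Since $(b\za E_2)^*=-q\,(b^*\za F_2)$ and $(b\za E_2E_1)^*=q^2\,(b^*\za F_2F_1)$ for \emph{every} $b\in\Oq$, the relation between $(\de b)^*$ and $\deb(b^*)$ carries one uniform sign on all of $\Oq$, including $\Aq$; what you are really bumping into is a tension internal to the paper between the reality statement of Section~3 and the involution conventions effectively used in Lemma~\ref{prop:3.4}. This does not damage your argument, because the only instances you need are on the generators $z_j$, $z_j^*$, where the sign you use is read off directly from \eqref{eq:dezdebz}, and on $\phi$ itself, where you take $\phi^*=\ii(u^2_1,u^1_1)^t$ from Lemma~\ref{prop:3.4}; used consistently, these give exactly $\varphi^\dag\deb\psi=-q^{-1}(\deb\varphi^\dag)\psi=tq^{-1}\phi^*$, in agreement with the statement. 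But the parenthetical explanation should be dropped or rephrased as a choice of convention rather than a property distinguishing $\Oq$ from $\Aq$.
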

\begin{proof}
A simple computation gives, 
$$
\psi^\dag\de\varphi = t(qz_3\de z_2-z_2\de z_3) \;, \qquad \varphi^\dag\deb\psi =t(qz_2^*\deb z_3^*-z_3^*\deb z_2^*) \;.
$$
Then, from \eqref{eq:dezdebz} and \eqref{eq:star} we get, 
\begin{align*}
qz_3\de z_2-z_2\de z_3 &= \ii q^{-\frac{3}{2}}\binom{qu^3_3u^2_2-u^3_2u^2_3}{qu^3_3u^1_2-u^3_2u^1_3}
     = \ii q^{-\frac{3}{2}}\binom{qu^2_2u^3_3-q^2u^2_3u^3_2}{qu^1_2u^3_3-q^2u^1_3u^3_2}= \ii \binom{q^{-\frac{1}{2}}(u^1_1)^*}{-q^{\frac{1}{2}}(u^2_1)^*} \\
qz_2^*\deb z_3^*-z_3^*\deb z_2^* & = \ii q^{-\frac{3}{2}}\binom{q^{-\frac{1}{2}}(qu^1_3u^3_2-u^1_2u^3_3)^*}{-q^{\frac{1}{2}}(qu^2_3u^3_2-u^2_2u^3_3)^*}
     = q^{-1} \binom{u^2_1}{u^1_1}
\end{align*}
and the left hand sided in \eqref{eq:yyy} follow from a comparison with \eqref{eq:phip33}.
\end{proof}

\begin{lemma}
\begin{subequations}
\begin{align}
(\phi\wprod\phi^*)^+ &= q^{\frac{1}{2}s}c_1
\begin{bmatrix}
q^{-\frac{1}{2}}(u^1_1)^*u^2_1 \\[3pt]
[2]^{-\frac{1}{2}}\big(q^{-1}(u^1_1)^*u^1_1-q(u^2_1)^*u^2_1 \big) \\[3pt]
-q^{\frac{1}{2}}(u^2_1)^*u^1_1
\end{bmatrix} \label{eq:consequence}
\\[5pt]
(\phi^*\wprod\phi)^+ &= -c_1\begin{bmatrix}
q^{-\frac{1}{2}}u^2_1(u^1_1)^* \\[3pt]
[2]^{-\frac{1}{2}}\big(u^1_1(u^1_1)^*-u^2_1(u^2_1)^* \big) \\[3pt]
-q^{\frac{1}{2}}u^1_1(u^2_1)^*
\end{bmatrix} \label{eq:consequenceB}
\end{align}
\end{subequations}
\end{lemma}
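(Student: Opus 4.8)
The plan is to evaluate $\phi\wprod\phi^*$ and $\phi^*\wprod\phi$ directly from the explicit expressions for $\phi,\phi^*$ in Lemma~\ref{prop:3.4} (equation \eqref{eq:phip33}) together with the structure constants of Proposition~\ref{pr:const}, and then to extract the self-dual part using the description of self-dual $(1,1)$-forms provided by Proposition~\ref{lemma:4.3}.

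First I would record the shape of the two factors: by Lemma~\ref{prop:3.4}, $\phi$ is represented by the $V^{1,0}$-valued vector $\ii\bigl(q^{-\frac{1}{2}}(u^1_1)^*,\,-q^{\frac{1}{2}}(u^2_1)^*\bigr)^t$ and $\phi^*$ by the $V^{0,1}$-valued vector $\ii\bigl(u^2_1,\,u^1_1\bigr)^t$. Since $\phi\in\Omega^{1,0}(L_2)$ and $\phi^*\in\Omega^{0,1}(L_{-2})$, both $\phi\wprod\phi^*$ and $\phi^*\wprod\phi$ are scalar ($\Aq$-valued) $(1,1)$-forms; in particular they have no $(2,0)$ or $(0,2)$ component, so by the decomposition \eqref{eq:dec2forms} taking the self-dual part amounts to projecting $\Omega^{1,1}=\Omega^{1,1}_v\oplus\Omega^{1,1}_s$ onto one of its two summands. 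Proposition~\ref{lemma:4.3} then pins this down: for the orientation we have fixed ($\lambda=-1$, $c_0c_3>0$) the $+1$-eigenspace of $\Hs$ on two-forms is exactly $\Omega^{1,1}_v$, the spin-$1$ summand. Consequently, in each of the two relevant lines of Proposition~\ref{pr:const},
$$
V^{1,0}\times V^{0,1}\to V^{1,1}:\quad v\wprod w=\bigl(-q^{\frac{1}{2}s}c_1\,\mu_1(v,w),\;q^{-\frac{3}{2}s}c_2\,\mu_0(v,w)\bigr)^t,
$$
$$
V^{0,1}\times V^{1,0}\to V^{1,1}:\quad v\wprod w=\bigl(c_1\,\mu_1(v,w),\;c_2\,\mu_0(v,w)\bigr)^t,
$$
the operation $(\,\cdot\,)^+$ simply retains the $\mu_1$-block (the first three components) and discards the $\mu_0$ (fourth) component.

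It then remains to substitute. For $(\phi\wprod\phi^*)^+$ I would use the first line with $v$ the coefficient vector of $\phi$ and $w$ that of $\phi^*$, extending $\mu_1$ bilinearly with the $\Oq$-valued coefficients kept in the order ``$\phi$ then $\phi^*$'' (exactly as in the evaluation of $\de p_{ij}\wprod\deb p_{kl}$ in the proof of \eqref{eq:Kah}) and collecting the overall factor $\ii\cdot\ii=-1$; a one-line evaluation of $\mu_1$ then produces \eqref{eq:consequence}, the surviving $q^{\frac{1}{2}s}$ being the prefactor of that line. For $(\phi^*\wprod\phi)^+$ I would instead use the second line, with the two factors in the opposite order; the identical bookkeeping produces \eqref{eq:consequenceB}, this time with no power of $q^{s}$ since that line carries no $q^{\frac{1}{2}s}$ prefactor.

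The work is entirely clerical; the only points demanding attention are (i) keeping the noncommuting $\Oq$-coefficients in the correct left-to-right order while expanding $\mu_1$ --- the likeliest spot for a slip --- (ii) not dropping the factor $\ii\cdot\ii=-1$ coming from the two explicit $\ii$'s in $\phi$ and $\phi^*$, and (iii) picking the correct one of the two lines of Proposition~\ref{pr:const} according to whether the $(1,0)$- or the $(0,1)$-factor stands on the left. There is no genuine obstacle beyond this; the one conceptual input is the identification, via Proposition~\ref{lemma:4.3}, of the self-dual part of a $(1,1)$-form with its $\Omega^{1,1}_v$-component.
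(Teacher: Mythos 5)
Your proposal is correct and is essentially the paper's own argument: the paper's proof simply invokes the explicit expressions \eqref{eq:phip33} for $\phi$ and $\phi^*$ and performs the same ``simple computation'', namely expanding the products via the $V^{1,0}\times V^{0,1}$ and $V^{0,1}\times V^{1,0}$ lines of Proposition~\ref{pr:const} (keeping the $\Oq$-coefficients in order and the factor $\ii\cdot\ii=-1$) and retaining the spin-$1$ block, which for $\lambda=-1$, $c_0c_3>0$ is precisely the self-dual $\Omega^{1,1}_v$-component. Your bookkeeping reproduces \eqref{eq:consequence} and \eqref{eq:consequenceB} exactly, so there is nothing to add.
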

\begin{proof}
It follows from Lemma~\ref{lemma:1} after a simple computation.
\end{proof}

\begin{lemma}
\begin{subequations}
\begin{align}
(\de x\wprod\deb x)^+ &=q^{\frac{1}{2}s}c_1q^{-2}x
\begin{bmatrix}
q^{-\frac{1}{2}}u^2_1(u^1_1)^* \\[3pt]
[2]^{-\frac{1}{2}}\big( u^1_1(u^1_1)^*-u^2_1(u^2_1)^* \big) \\[3pt]
-q^{\frac{1}{2}}u^1_1(u^2_1)^*
\end{bmatrix} \label{eq:dexdebx}
\\[5pt]
(\deb x\wprod\de x)^+ &=-q^{-4}c_1x
\begin{bmatrix}
q^{-\frac{1}{2}}(u^1_1)^*u^2_1 \\[3pt]
[2]^{-\frac{1}{2}}\big(q^{-1}(u^1_1)^*u^1_1-q(u^2_1)^*u^2_1\big) \\[3pt]
-q^{\frac{1}{2}}(u^2_1)^*u^1_1
\end{bmatrix} \label{eq:dexdebxB}
\end{align}
\end{subequations}
\end{lemma}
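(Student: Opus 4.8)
The plan is to compute the two wedge products in full and then keep their self-dual component. Recall first that, with the choice $\lambda=-1$ and $c_0c_3>0$ fixed after Proposition~\ref{lemma:4.3}, the operator $\Hs$ acts on $\Omega^{1,1}_v$ by $+1$; hence the self-dual part $\omega^+$ of a $(1,1)$-form $\omega=(w_1,w_2,w_3,w_4)^t$ is exactly its $\Omega^{1,1}_v$-component $(w_1,w_2,w_3)^t$, and it suffices to compute $\de x\wprod\deb x$ and $\deb x\wprod\de x$ and read off the first three entries. By \eqref{eq:ddPij} with $j=k=1$ one has $\de x=\de p_{11}=\ii q^{-1}(u^3_1)^*\binom{u^2_1}{u^1_1}$ and $\deb x=\deb p_{11}=\ii q^{-1}\binom{q^{-1/2}(u^1_1)^*}{-q^{1/2}(u^2_1)^*}u^3_1$; up to the scalar $\ii$ and the boundary factors $(u^3_1)^*,u^3_1$, the two vectors occurring here are the $\phi^*,\phi$ of Lemma~\ref{prop:3.4}.

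First I would form the products with the appropriate lines of Proposition~\ref{pr:const}: for $\de x\wprod\deb x$ the map $V^{1,0}\times V^{0,1}\to V^{1,1}$, whose $\Omega^{1,1}_v$-part is $-q^{\frac{1}{2}s}c_1\mu_1$, and for $\deb x\wprod\de x$ the map $V^{0,1}\times V^{1,0}\to V^{1,1}$, whose $\Omega^{1,1}_v$-part is $c_1\mu_1$. Since $\wprod$ is $\Oq$-bilinear on coefficients, the boundary factors $(u^3_1)^*$ and $u^3_1$ are simply carried along, so that $\de x\wprod\deb x$ equals $-q^{-2}(u^3_1)^*$ times the image of $(W,V)$ under that product map, times $u^3_1$, with $W=(u^2_1,u^1_1)^t$ and $V=(q^{-1/2}(u^1_1)^*,-q^{1/2}(u^2_1)^*)^t$; and symmetrically $\deb x\wprod\de x$ equals $-q^{-2}$ times the image of $(V,W)$, but now with the central factor $u^3_1(u^3_1)^*$ sitting between the two $\C^2$-slots. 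Evaluating $\mu_1$ on the four monomials $u^a_1(u^b_1)^*$ is exactly the bookkeeping already done for $(\phi\wprod\phi^*)^+$ and $(\phi^*\wprod\phi)^+$ in \eqref{eq:consequence}--\eqref{eq:consequenceB}, and it reproduces the column vectors displayed in \eqref{eq:dexdebx} and \eqref{eq:dexdebxB}.

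The last step is to rewrite the $u^3_1$-factors in terms of $x=p_{11}=z_1^*z_1=(u^3_1)^*u^3_1$. Using $u^i_1u^3_1=qu^3_1u^i_1$ and $u^i_1(u^3_1)^*=q(u^3_1)^*u^i_1$ for $i<3$ and their adjoints — which follow from the defining relations of $\Oq$ just as in the derivation of \eqref{eq:lemma36A} — one checks that $u^3_1$ commutes with every $u^a_1(u^b_1)^*$ with $a,b<3$, so $(u^3_1)^*\bigl[u^a_1(u^b_1)^*\bigr]u^3_1=x\,u^a_1(u^b_1)^*$ and the prefactor in \eqref{eq:dexdebx} comes out as $q^{-2}x$; in the second product the central factor is already $u^3_1(u^3_1)^*=z_1z_1^*=z_1^*z_1=x$ by the spherical relation $[z_1^*,z_1]=0$, and moving $x$ leftward past the $(u^a_1)^*$ coming from $V$ produces one extra $q^{-2}$, giving the prefactor $q^{-4}x$ in \eqref{eq:dexdebxB}. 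I expect no genuine difficulty — this is pure bookkeeping — but the two points to be careful about are the choice of the correct row of Proposition~\ref{pr:const} for each ordering (the orders $\de\wprod\deb$ and $\deb\wprod\de$ differ by a power of $q^s$ and a sign) and a consistent handling of the commutations, since a slip there alters a power of $q$ or a sign in the stated formulas.
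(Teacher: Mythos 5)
Your proposal is correct and follows essentially the same route as the paper: the paper first rewrites $\de x\wprod\deb x=qx\,\de z_1\wprod\deb z_1^*$ and $\deb x\wprod\de x=q^{-1}x\,\deb z_1^*\wprod\de z_1$ and then evaluates the elementary products via Proposition~\ref{pr:const}, whereas you apply \eqref{eq:ddPij} and the same rows of Proposition~\ref{pr:const} directly and commute the $u^3_1$-factors afterwards --- the same $\mu_1$ bookkeeping and the same commutation relations, just organized in a different order. Your sign/power-of-$q$ accounting (the cancellation $-q^{-2}\cdot(-q^{\frac{1}{2}s}c_1)$ in the first case, the extra $q^{-2}$ from moving $x$ past $(u^a_1)^*$ in the second) checks out against \eqref{eq:dexdebx}--\eqref{eq:dexdebxB}.
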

\begin{proof}
Note that
\begin{align*}
\de x\wprod\deb x &=z_1^*\de z_1\wprod z_1\deb z_1^*=qx\de z_1\wprod\deb z_1^* \\[3pt]
\deb x\wprod\de x &=z_1\deb z_1^*\wprod z_1^*\de z_1=q^{-1}x\deb z_1^*\wprod\de z_1 \;.
\end{align*}
The computation of $\de z_1\wprod\deb z_1^*$ and $\deb z_1^*\wprod\de z_1$ is straightforward.
\end{proof}

\begin{lemma}
\begin{subequations}
\begin{align}
(\de\psi^\dag\wprod\deb\psi)^+ &=
-q^{-3+\frac{1}{2}s}c_1t^2
\begin{bmatrix}
q^{-\frac{1}{2}}u^2_1(u^1_1)^* \\[3pt]
[2]^{-\frac{1}{2}}\big(u^1_1(u^1_1)^*-u^2_1(u^2_1)^*\big) \\[3pt]
-q^{\frac{1}{2}}u^1_1(u^2_1)^*
\end{bmatrix} \label{eq:dedebp}
\\
 (\deb\varphi^\dag\wprod\de\varphi)^+ &=qc_1t^2
\begin{bmatrix}
q^{-\frac{1}{2}}(u^1_1)^*u^2_1 \\[3pt]
[2]^{-\frac{1}{2}}\big( q^{-1}(u^1_1)^*u^1_1-q(u^2_1)^*u^2_1 \big) \\[3pt]
-q^{\frac{1}{2}}(u^2_1)^*u^1_1
\end{bmatrix}
 \label{eq:debdep}
 \end{align}
\end{subequations}
\end{lemma}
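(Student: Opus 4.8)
\textit{Proof plan.} The plan is to reduce both identities to the two elementary building blocks $\de z_j\wprod\deb z_j^*$ and $\deb z_j^*\wprod\de z_j$, $j=1,2,3$, to compute their self-dual parts, and then to collapse the $j=2,3$ contributions using the orthogonality relations of Lemma~\ref{lemma:ortho}. First I would expand the matrix products $\de\psi^\dag\wprod\deb\psi$ and $\deb\varphi^\dag\wprod\de\varphi$ entry by entry. Every entry of $\psi$ (resp.\ of $\varphi$) is a real scalar, possibly times a power of $q$, times one of $z_1^*,z_2^*,z_3^*$ (resp.\ $z_1,z_2,z_3$), and $\de z_j^*=\deb z_j=0$ while $\de z_j$ and $\deb z_j^*$ are given by \eqref{eq:dezdebz}. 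Collecting terms and using $(\cos\theta)^2+(\sin\theta)^2=1$ and $(\cos 2\theta)^2=1-t^2$, this gives
$$\de\psi^\dag\wprod\deb\psi=(1-t^2)\,\de z_1\wprod\deb z_1^*+\de z_2\wprod\deb z_2^*+\de z_3\wprod\deb z_3^* \;,$$
$$\deb\varphi^\dag\wprod\de\varphi=q^4(1-t^2)\,\deb z_1^*\wprod\de z_1+q^2\,\deb z_2^*\wprod\de z_2+\deb z_3^*\wprod\de z_3 \;.$$

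Next I would compute the self-dual parts of the building blocks. For a $(1,1)$-form $\omega^+$ is its $\Omega^{1,1}_v$-component, i.e.\ the first three entries of the $V^{1,1}$-column, since by Proposition~\ref{lemma:4.3} (with our choice $\lambda=-1$, $c_0c_3>0$) the subspace $\Omega^{1,1}_v$ consists precisely of the self-dual $(1,1)$-forms. Plugging \eqref{eq:dezdebz} into the products $V^{1,0}\times V^{0,1}\to V^{1,1}$ and $V^{0,1}\times V^{1,0}\to V^{1,1}$ of Proposition~\ref{pr:const} and retaining only the $\mu_1$-part, a short computation gives, for every $j$,
$$(\de z_j\wprod\deb z_j^*)^+=q^{-3+\frac{1}{2}s}c_1\begin{bmatrix}q^{-\frac{1}{2}}u^2_j(u^1_j)^*\\[3pt][2]^{-\frac{1}{2}}\bigl(u^1_j(u^1_j)^*-u^2_j(u^2_j)^*\bigr)\\[3pt]-q^{\frac{1}{2}}u^1_j(u^2_j)^*\end{bmatrix} \;,$$
$$(\deb z_j^*\wprod\de z_j)^+=-q^{-3}c_1\begin{bmatrix}q^{-\frac{1}{2}}(u^1_j)^*u^2_j\\[3pt][2]^{-\frac{1}{2}}\bigl(q^{-1}(u^1_j)^*u^1_j-q(u^2_j)^*u^2_j\bigr)\\[3pt]-q^{\frac{1}{2}}(u^2_j)^*u^1_j\end{bmatrix} \;.$$
(For $j=1$ these can also be read off from Lemma~\ref{prop:3.4}, which identifies the coordinate vectors of $\de z_1$ and $\deb z_1^*$ with those of $\phi^*$ and $\phi$, together with the already-established \eqref{eq:consequence}--\eqref{eq:consequenceB}, once one accounts for the different prefactor in the $V^{1,0}\times V^{0,1}$ and $V^{0,1}\times V^{1,0}$ products.)

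Finally I would sum over $j$ and invoke Lemma~\ref{lemma:ortho}. For the first identity, the \emph{unweighted} relation $\sum_j u^a_j(u^b_j)^*=\delta_{a,b}$ gives $\sum_{j=1}^3(\de z_j\wprod\deb z_j^*)^+=0$, so the first display collapses to $(\de\psi^\dag\wprod\deb\psi)^+=\bigl((1-t^2)-1\bigr)(\de z_1\wprod\deb z_1^*)^+=-t^2(\de z_1\wprod\deb z_1^*)^+$, which is \eqref{eq:dedebp}. For the second identity, the $q$-\emph{weighted} relation $\sum_j q^{2(a-j)}(u^a_j)^*u^b_j=\delta_{a,b}$ gives $\sum_{j=1}^3 q^{-2j}(\deb z_j^*\wprod\de z_j)^+=0$, that is $q^4(\deb z_1^*\wprod\de z_1)^++q^2(\deb z_2^*\wprod\de z_2)^++(\deb z_3^*\wprod\de z_3)^+=0$; substituting this into the second display gives $(\deb\varphi^\dag\wprod\de\varphi)^+=-q^4t^2(\deb z_1^*\wprod\de z_1)^+$, which is \eqref{eq:debdep}. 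The only real obstacle is this last bookkeeping: one must pair the first block with the unweighted orthogonality relation (where the conjugates sit on the right) and the second block with the $q^{2(a-j)}$-weighted one (where they sit on the left), and the powers of $q$ carried by the entries of $\varphi$ must line up exactly with those weights; everything else reduces to a direct substitution into Proposition~\ref{pr:const}.
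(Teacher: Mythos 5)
Your proposal is correct and follows essentially the same route as the paper: expand $\de\psi^\dag\wprod\deb\psi$ and $\deb\varphi^\dag\wprod\de\varphi$ into the blocks $\de z_j\wprod\deb z_j^*$ and $\deb z_j^*\wprod\de z_j$ via \eqref{eq:dezdebz}, extract the $\Omega^{1,1}_v$-component using Proposition~\ref{pr:const}, and kill the $j$-sums with the appropriate (unweighted versus $q$-weighted) orthogonality relations of Lemma~\ref{lemma:ortho}. The only cosmetic difference is that you take self-dual parts of each building block before summing, whereas the paper sums first and then projects, which is the same computation.
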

\begin{proof}
One has
\begin{align*}
\de\psi^\dag\wprod\deb\psi &=-t^2\de z_1\wprod\deb z_1^*+\sum\nolimits_{j=1}^3\de z_j\wprod\deb z_j^*
\\
&=q^{-3}t^2\binom{u^2_1}{u^1_1}\wprod\binom{q^{-\frac{1}{2}}(u^1_1)^*}{-q^{\frac{1}{2}}(u^2_1)^*}
-q^{-3}\sum\nolimits_{j=1}^3\binom{u^2_j}{u^1_j}\wprod\binom{q^{-\frac{1}{2}}(u^1_j)^*}{-q^{\frac{1}{2}}(u^2_j)^*}
\\[10pt]
\deb\varphi^\dag\wprod\de\varphi &=-t^2q^4\deb z_1^*\wprod\de z_1+\sum\nolimits_{j=1}^3q^{6-2j}\deb z_j^*\wprod\de z_j
\\
&=q^{-3}t^2q^4\binom{q^{-\frac{1}{2}}(u^1_1)^*}{-q^{\frac{1}{2}}(u^2_1)^*}\wprod\binom{u^2_1}{u^1_1}
-q^{-3}\sum\nolimits_{j=1}^3q^{6-2j}\binom{q^{-\frac{1}{2}}(u^1_j)^*}{-q^{\frac{1}{2}}(u^2_j)^*}\wprod\binom{u^2_j}{u^1_j}
\end{align*}
Using Proposition~\ref{pr:const}:
\begin{multline*}
q^{3-\frac{1}{2}s}c_1^{-1}(\de\psi^\dag\wprod\deb\psi)^+
=-t^2
\begin{bmatrix}
q^{-\frac{1}{2}}u^2_1(u^1_1)^* \\[3pt]
[2]^{-\frac{1}{2}}\big(u^1_1(u^1_1)^*-u^2_1(u^2_1)^*\big) \\[3pt]
-q^{\frac{1}{2}}u^1_1(u^2_1)^*
\end{bmatrix}
\\
+\sum\nolimits_{j=1}^3\begin{bmatrix}
q^{-\frac{1}{2}}u^2_j(u^1_j)^* \\[3pt]
[2]^{-\frac{1}{2}}\big(u^1_j(u^1_j)^*-u^2_j(u^2_j)^*\big) \\[3pt]
-q^{\frac{1}{2}}u^1_j(u^2_j)^*
\end{bmatrix}
\end{multline*}
It follows from Lemma~\ref{lemma:ortho} that the second term is zero, and this proves \eqref{eq:dedebp}. Similarly
\begin{multline*}
q^3c_1^{-1}(\deb\varphi^\dag\wprod\de\varphi)^+
=t^2q^4
\begin{bmatrix}
q^{-\frac{1}{2}}(u^1_1)^*u^2_1 \\[3pt]
[2]^{-\frac{1}{2}}\big( q^{-1}(u^1_1)^*u^1_1-q(u^2_1)^*u^2_1 \big) \\[3pt]
-q^{\frac{1}{2}}(u^2_1)^*u^1_1
\end{bmatrix}
\\
-\sum\nolimits_{j=1}^3q^{6-2j}\begin{bmatrix}
q^{-\frac{1}{2}}(u^1_j)^*u^2_j \\[3pt]
[2]^{-\frac{1}{2}}\big( q^{-1}(u^1_j)^*u^1_j-q(u^2_j)^*u^2_j \big) \\[3pt]
-q^{\frac{1}{2}}(u^2_j)^*u^1_j
\end{bmatrix}
\end{multline*}
It follows from Lemma 2.1 that the second term is zero, and this proves \eqref{eq:debdep}.
\end{proof}

As mentioned $(2,0)$ forms and $(0,2)$ forms are ASD, thus we only need to consider the $(1,1)$ component of the curvature $F$. We denote by $F_{ij}$ the matrix elements of the $\Omega^{1,1}$-component of $F$. Since $F_{21}=F_{12}^*$, we only need to compute three matrix elements.

\begin{lemma}\label{lemma:B7}
\begin{align*}
z_1 \phi &=q \phi z_1 \:, & z_1^* \phi &=q \phi z_1^* \:, &
z_1 \phi^* &=q^{-1} \phi^* z_1 \:, & z_1^* \phi^* &=q^{-1} \phi^* z_1^* \:.
\end{align*}
\end{lemma}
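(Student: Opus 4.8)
The plan is to reduce the four bimodule identities to plain commutation relations in the algebra $\Oq$, using the explicit form of $\phi$ and $\phi^*$ recorded in Lemma~\ref{prop:3.4}, namely $\phi=\ii\bigl(q^{-\frac12}(u^1_1)^*,\,-q^{\frac12}(u^2_1)^*\bigr)^t$ and $\phi^*=\ii\,(u^2_1,u^1_1)^t$, together with $z_1=u^3_1$ and $z_1^*=(u^3_1)^*$. Multiplication by $z_1$ or $z_1^*$ on the left or right acts entrywise on these two-component columns, so after cancelling the (nonzero) scalar prefactors, e.g.\ $z_1\phi=q\phi z_1$ becomes the pair $u^3_1(u^1_1)^*=q(u^1_1)^*u^3_1$ and $u^3_1(u^2_1)^*=q(u^2_1)^*u^3_1$, while $z_1\phi^*=q^{-1}\phi^* z_1$ becomes $u^3_1u^1_1=q^{-1}u^1_1u^3_1$ and $u^3_1u^2_1=q^{-1}u^2_1u^3_1$. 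The two identities involving $z_1^*$ are the $*$-conjugates of these, so it is enough to prove the two that involve $z_1$ and then take adjoints.

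The identities coming from $z_1\phi^*$ are immediate: they are the defining relations $u^i_ku^j_k=qu^j_ku^i_k$ ($i<j$) of $\Oq$ specialised to $k=1$ and $(i,j)=(1,3),(2,3)$. For the identities $u^3_1(u^i_1)^*=q(u^i_1)^*u^3_1$, $i=1,2$, I would appeal to Remark~\ref{rem:otherS}: the $*$-subalgebra of $\Oq$ generated by the first column $u^1_1,u^2_1,u^3_1$ is isomorphic to $\Sq$ via $u^j_1\mapsto z^*_{4-j}$, hence $(u^j_1)^*\mapsto z_{4-j}$. Under this isomorphism the desired relations become $z_1^*z_{4-i}=q\,z_{4-i}z_1^*$ with $4-i\in\{2,3\}$, which are precisely the sphere relations $z_a^*z_b=qz_bz_a^*$ ($a\neq b$) from \eqref{eq:sphere}. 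Alternatively, one may substitute \eqref{eq:star} for $(u^1_1)^*$ and $(u^2_1)^*$ and verify these relations directly from the quadratic relations of $\Oq$, but this route is more laborious.

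It then remains to take adjoints of the two established $z_1$-identities: applying $*$ to $u^3_1(u^i_1)^*=q(u^i_1)^*u^3_1$ yields $(u^3_1)^*u^i_1=q^{-1}u^i_1(u^3_1)^*$, which repackages componentwise as $z_1^*\phi^*=q^{-1}\phi^* z_1^*$, and applying $*$ to $u^3_1u^i_1=q^{-1}u^i_1u^3_1$ yields $(u^3_1)^*(u^i_1)^*=q(u^i_1)^*(u^3_1)^*$, i.e.\ $z_1^*\phi=q\phi z_1^*$. There is no real conceptual obstacle in this lemma; the only place to be careful is the bookkeeping — keeping the $q^{\pm1/2}$ prefactors of $\phi$ straight while cancelling them, and applying the isomorphism of Remark~\ref{rem:otherS} in the correct direction (first column, $u^j_1\leftrightarrow z^*_{4-j}$), since an error in either would turn a $q$ into a $q^{-1}$.
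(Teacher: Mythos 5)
Your proof is correct and follows essentially the paper's route: the paper's own argument is a direct ``easy computation'' reducing, via the explicit formul{\ae} \eqref{eq:phip33}, to $q$-commutation relations between $u^3_1$ and the other first-column entries and their adjoints, exactly as you do. Your only (harmless) variation is to obtain $u^3_1(u^i_1)^*=q(u^i_1)^*u^3_1$ from the sphere relations via the first-column isomorphism of Remark~\ref{rem:otherS}, where the paper would verify it directly from \eqref{eq:star} and the defining relations of $\Oq$, as in the proof of \eqref{eq:lemma36A}.
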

\begin{proof}
The proof is an easy computation, similar to the proof of \eqref{eq:lemma36A}.
\end{proof}

\begin{lemma}\label{lem:12}
The off-diagonal terms of $F$ vanish:
\begin{align}
F_{12}=0 \; .
\end{align}
\end{lemma}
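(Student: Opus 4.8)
The plan is to start from the curvature formula \eqref{lemma:B1},
$$
F=\big\{(\Psi^\dag\dd\Psi)Q\big\}^2+Q(\dd\Psi^\dag)\wprod(\dd\Psi)Q\;,
\qquad M:=(\Psi^\dag\dd\Psi)Q\;,
$$
and to extract its $(1,2)$ matrix entry. Since $(2,0)$- and $(0,2)$-forms do not lie in $\Omega^{1,1}$, one may discard at each step every wedge product whose bidegree is not $(1,1)$. Because $Q=\mathrm{diag}(q^{1/2},q^{-1/2})$, the outer copies of $Q$ cancel on an off-diagonal entry, so that
$$
F_{12}=M_{11}\wprod M_{12}+M_{12}\wprod M_{22}+\big((\dd\Psi^\dag)\wprod(\dd\Psi)\big)_{12}\;,
$$
where the last term is the $\psi$-row $\dd(f(x)\psi^\dag)$ of $\dd\Psi^\dag$ wedged with the $\varphi$-column $(\de\varphi)f(q^4x)+q^{-1/2}\varphi\,\dd f(q^4x)$ of $\dd\Psi$.

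Next I would substitute the explicit data. From Lemma~\ref{cor:B4} together with \eqref{eq:yyy} one has $M_{12}=q^{-1/2}f(x)\,(\psi^\dag\de\varphi)\,f(q^4x)=q^{-1/2}t\,f(x)\,\phi\,f(q^4x)$, which is of pure bidegree $(1,0)$; hence in $M_{11}\wprod M_{12}+M_{12}\wprod M_{22}$ only the $(0,1)$-parts, $q\,f(x)\deb f(x)^{-1}$ of $M_{11}$ and $q^{-1}f(q^4x)^{-1}\deb f(q^4x)$ of $M_{22}$, survive. For the remaining term I would use \eqref{eq:xxx}, $\dd\psi=\deb\psi$ and $\dd\varphi=\de\varphi$ to write the $\psi$-row of $\dd\Psi^\dag$ as $f(x)\de\psi^\dag+q^{1/2}(\dd f(x))\psi^\dag$, and then expand the wedge into four summands: the one of type $\de\psi^\dag\wprod\de\varphi$ is $(2,0)$ and is dropped; the one quadratic in $\dd f$ carries $\psi^\dag\varphi$ in the middle and vanishes by \eqref{eq:psiphi}; and the two mixed summands are rewritten, again via \eqref{eq:yyy} in the forms $\psi^\dag(\de\varphi)=t\phi$ and $(\de\psi^\dag)\varphi=-q^{-1}t\phi$, as multiples of $\phi$ wedged with $\deb f$.

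The final step is the cancellation. One is left with four $(1,1)$-forms, all proportional to $t$: the contributions $+q^{-3/2}t\,f(x)\,\phi\wprod\deb f(q^4x)$ from $M_{12}\wprod M_{22}$ and $-q^{-3/2}t\,f(x)\,\phi\wprod\deb f(q^4x)$ from the mixed $\dd\Psi^\dag\wprod\dd\Psi$ term cancel immediately, while the other two combine into
$$
F_{12}=q^{1/2}t\,\Big(f(x)\,\deb\!\big(f(x)^{-1}\big)\,f(x)+\deb f(x)\Big)\wprod\phi\,f(q^4x)\;.
$$
As $f(x)$ is a function of $x=p_{11}\in\Aq$, on which $\deb$ is an ordinary (untwisted) derivation, $\deb\big(f(x)f(x)^{-1}\big)=0$ forces $f(x)\deb(f(x)^{-1})f(x)=-\deb f(x)$, so the bracket is zero and $F_{12}=0$.

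The part I expect to be hardest is not conceptual but bookkeeping: one has to track the $K_2$-twists hidden in the Leibniz rule on $\Oq$ (which produce the $q^{\pm1/2}$ prefactors in \eqref{eq:xxx} and in $\dd(f(x)\psi^\dag)$), the difference between left and right multiplication by $f(x)$ and $f(q^4x)$ inside the bimodules of forms when commuting these functions past a wedge, the many powers of $q$, and which summands carry the forbidden bidegrees $(2,0)$ or $(0,2)$. The commutation relations of Lemma~\ref{lemma:B7} are the bookkeeping device that normal-orders the surviving expressions, should one prefer to reorder before cancelling rather than after.
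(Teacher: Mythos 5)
Your proposal is correct and follows essentially the same route as the paper: the same curvature formula \eqref{lemma:B1}, the entries from Lemma~\ref{cor:B4} together with \eqref{eq:yyy} and \eqref{eq:psiphi}, and the same pattern of cancellation among the four surviving $(1,1)$-terms. The only inessential deviation is the last step, where you group the two remaining terms over the common right factor $\phi\,f(q^4x)$ and use the derivation identity $\deb\bigl(f(x)f(x)^{-1}\bigr)=0$, whereas the paper instead commutes $f(q^4x)$ to the left through $\phi$ and $\deb x$ via Lemma~\ref{lemma:B7} and \eqref{eq:lemma36}; both closings are valid.
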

\begin{proof}
>From Lemma~\ref{lemma:B1} and Lemma~\ref{cor:B4}:
\begin{align*}
q^{\frac{1}{2}}F_{12} &= - t q f(x)\deb f(x)\wprod\phi+itq^{-1}f(x)\phi\wprod \deb f(q^4x) \\[3pt]
&\qquad\qquad  - t q^{-1} f(x)\phi\wprod\deb f(q^4x)+t q \deb f(x)\wprod \phi f(q^4x) \\[3pt]
&=t q \big[\deb f(x)\wprod \phi\big]f(q^4x)-t q f(x)\big[\deb f(x)\wprod\phi\big] \;.
\end{align*}
Since $\phi x=q^{-2}x\phi$ and $(\deb x)x=q^{-2}x(\deb x)$ (cf.~Lemma~\ref{lemma:B7} and \eqref{eq:lemma36}) last term is zero.
\end{proof}

\begin{lemma}\label{lemma:B12}
\begin{align*}
F_{11} &=qf(x)^2\de\psi^\dag\wprod\deb\psi+t^2q^{-1}f(x)^2f(q^2x)^2\phi\wprod\phi^* \\[3pt]
&-q^2t^2\big\{f(x)+f(q^2x)\big\}\dot{f}(qx)\de x\wprod\deb x \:.
\end{align*}
\end{lemma}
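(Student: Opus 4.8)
The plan is to start from the identity \eqref{lemma:B1}, namely $F=\{(\Psi^\dag\dd\Psi)Q\}^2+Q(\dd\Psi^\dag)\wprod(\dd\Psi)Q$, and read off the $(1,1)$-form part of its upper-left matrix entry. Writing $M:=(\Psi^\dag\dd\Psi)Q$ and using $Q_{11}=q^{\frac12}$, this gives
$$
F_{11}=\bigl(M\wprod M\bigr)_{11}+q\,\bigl((\dd\Psi^\dag)\wprod(\dd\Psi)\bigr)_{11}\,,
$$
where only the $(1,1)$-form contributions are retained and the entries of $M$ are supplied by Lemma~\ref{cor:B4}.

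For the first term, $(M\wprod M)_{11}=M_{11}\wprod M_{11}+M_{12}\wprod M_{21}$. I would deal with $M_{12}\wprod M_{21}$ first: inserting $M_{12}=q^{-\frac12}f(x)(\psi^\dag\de\varphi)f(q^4x)$ and $M_{21}=q^{\frac12}f(q^4x)(\varphi^\dag\deb\psi)f(x)$, rewriting $\psi^\dag\de\varphi=t\phi$ and $\varphi^\dag\deb\psi=tq^{-1}\phi^*$ by Lemma~\ref{lemma:1}, and pushing every function of $x$ to the left with the rules $\phi\,g(x)=g(q^{-2}x)\phi$, $\phi^*g(x)=g(q^2x)\phi^*$ (consequences of Lemma~\ref{lemma:B7}), one collects all factors into $q^{-1}t^2f(x)^2f(q^2x)^2\,\phi\wprod\phi^*$ --- exactly the second term of the statement, and purely of type $(1,1)$ since $\phi\in\Omega^{0,1}$, $\phi^*\in\Omega^{1,0}$. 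For $M_{11}\wprod M_{11}$, write $M_{11}=qf(x)^{-1}\de f(x)+qf(x)\deb f(x)^{-1}$; its $(1,1)$-part is the sum of the two mixed cross-terms, and replacing $\de f$, $\deb f$ by $\dot f$ times $\de x$, $\deb x$ via Lemma~\ref{cor:qder}, together with $(\de x)x=q^2x(\de x)$ and $(\deb x)x=q^{-2}x(\deb x)$ from \eqref{eq:lemma36A}, this reduces to a function of $x$ times $\de x\wprod\deb x$.

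For the second term I would expand $\dd\Psi$ by \eqref{eq:xxx} and $\dd\Psi^\dag$ by its adjoint (recall $\dd\psi=\deb\psi$, $\dd\varphi=\de\varphi$, hence $\dd\psi^\dag=\de\psi^\dag$, $\dd\varphi^\dag=\deb\varphi^\dag$ by conjugation), keep the first row of $\dd\Psi^\dag$ and the first column of $\dd\Psi$, and expand the wedge into four terms. One term is $f(x)\,\de\psi^\dag\wprod\deb\psi\,f(x)$; the point is that $\de\psi^\dag\wprod\deb\psi$ commutes with $x=p_{11}$, which follows from $u^a_1(u^b_1)^*p_{11}=p_{11}u^a_1(u^b_1)^*$, shown by the same computation that proves \eqref{eq:lemma36A}, so this term equals $f(x)^2\de\psi^\dag\wprod\deb\psi$ and, after the outer $q$, produces the first term of the statement. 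The remaining three wedge terms involve the scalars $(\de\psi^\dag)\psi$, $\psi^\dag\deb\psi$ and $\psi^\dag\psi$, which by Lemma~\ref{lemma:B3} equal $-q^{\frac12}t^2\de x$, $-q^{\frac12}t^2\deb x$ and $1-t^2x$; applying Lemma~\ref{cor:qder} and \eqref{eq:lemma36A} once more converts each into a further multiple of $\de x\wprod\deb x$.

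It then remains to add all the $\de x\wprod\deb x$-contributions coming from $M_{11}\wprod M_{11}$ and from the second term and to check that they combine --- using $f(q^2x)-f(x)=(q-q^{-1})qx\,\dot f(qx)$ --- into $-q^2t^2\{f(x)+f(q^2x)\}\dot f(qx)\,\de x\wprod\deb x$. This final bookkeeping, keeping every power of $q$ and every argument shift $x\mapsto q^kx$ in the factors of $f$ and $\dot f$ straight while commuting $\de x$, $\deb x$ past functions of $x$, is the main obstacle; everything else is a lengthy but mechanical use of the Leibniz rule and of the commutation relations already recorded.
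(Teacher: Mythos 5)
Your proposal is correct and follows essentially the same route as the paper's own proof: starting from \eqref{lemma:B1}, reading off the $(1,1)$ entry with Lemma~\ref{cor:B4} and \eqref{eq:yyy}, producing the $\phi\wprod\phi^*$ term via Lemma~\ref{lemma:B7}, and reducing everything else through Lemma~\ref{lemma:B3}, Lemma~\ref{cor:qder} and \eqref{eq:lemma36} to multiples of $\de x\wprod\deb x$, which is exactly the bookkeeping the paper carries out explicitly. The one detail to tighten is the commutation of $\de\psi^\dag\wprod\deb\psi$ with $f(x)$: since $\de\psi^\dag\wprod\deb\psi=-t^2\de z_1\wprod\deb z_1^*+\sum_j\de z_j\wprod\deb z_j^*$, the relations $u^a_1(u^b_1)^*p_{11}=p_{11}u^a_1(u^b_1)^*$ cover only the first piece, and for the summed piece one should invoke the orthogonality relations of Lemma~\ref{lemma:ortho}, which make its coefficients scalar.
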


\begin{proof}
>From Lemma~\ref{lemma:B1}, Lemma~\ref{cor:B4} and equation \eqref{eq:yyy}:
\begin{align*}
q^{-2}F_{11} &=q^{-1}f(x)^2\de\psi^\dag\wprod\deb\psi+t^2q^{-3}f(x)^2f(q^2x)^2\phi\wprod\phi^* \\
&+f^{-1}\de f\wprod f\deb f^{-1}+\de f\wprod (\deb f^{-2})f+f\de f^{-2}\wprod \deb f+\de f\wprod f^{-2}\deb f
 \\
&+f\deb f^{-1}\wprod f^{-1}\de f+\deb f\wprod f^{-2}\de f
\end{align*}
where $f=f(x)$. But
$$
f\deb f^{-1}\wprod f^{-1}\de f+\deb f\wprod f^{-2}\de f=\deb(ff^{-1})\wprod f^{-1}\de f=0 \;.
$$
Also
$$
\de f\wprod (\deb f^{-2})f+\de f\wprod f^{-2}\deb f=\de f\wprod\deb f^{-1}
$$
Therefore
\begin{align*}
q^{-2}F_{11} &=q^{-1}f(x)^2\de\psi^\dag\wprod\deb\psi + t^2q^{-3}f(x)^2f(q^2x)^2\phi\wprod\phi^* \\
&+f^{-1}\de f\wprod f\deb f^{-1} +f\de f^{-2}\wprod \deb f + \de f\wprod\deb f^{-1}
\end{align*}
But $f^{-1}(\de f)f+\de f=f^{-1}\de f^2=-f(\de f^{-2})f^2$. 
Hence
\begin{align*}
q^{-2}F_{11} &=q^{-1}f(x)^2\de\psi^\dag\wprod\deb\psi+t^2q^{-3}f(x)^2f(q^2x)^2\phi\wprod\phi^* \\
&+f\de f^{-2}\wprod (-f^2\deb f^{-1} +\deb f)
\end{align*}
Since $\de f^{-2}=-t^2\de x$, from
$$
f\de f^{-2}\wprod (-f^2\deb f^{-1} +\deb f)=-t^2\big\{f(x)+f(q^2x)\big\}\dot{f}(qx)\de x\wprod\deb x
$$
the thesis follows. Note that we used Cor.~\ref{cor:qder} and the commutation rules in \eqref{eq:lemma36}.
\end{proof}

\begin{lemma}\label{lemma:eta}
Let $\eta$ be the following self-dual $(1,1)$-form:
$$
\eta:=-q^{\frac{1}{2}s}c_1
\begin{bmatrix}
q^{-\frac{1}{2}}u^2_1(u^1_1)^* \\[3pt]
[2]^{-\frac{1}{2}}\big(u^1_1(u^1_1)^*-u^2_1(u^2_1)^*\big) \\[3pt]
-q^{\frac{1}{2}}u^1_1(u^2_1)^*
\end{bmatrix}
$$
Then
$$
(\de\psi^\dag\wprod\deb\psi)^+ =q^{-3}t^2\eta \;,\quad\quad
(\de x\wprod\deb x)^+ =-q^{-2}x\eta \;,\quad\quad
(\phi\wprod\phi^*)^+= - q^{-1}\eta
\;.
$$
\end{lemma}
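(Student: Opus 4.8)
All three identities are direct repackagings of formulas already established in this appendix. Write $\eta=-q^{\frac{1}{2}s}c_1\,\xi$, where $\xi$ denotes the column with entries $q^{-\frac{1}{2}}u^2_1(u^1_1)^*$, $[2]^{-\frac{1}{2}}\bigl(u^1_1(u^1_1)^*-u^2_1(u^2_1)^*\bigr)$, $-q^{\frac{1}{2}}u^1_1(u^2_1)^*$. Since $\eta$ has only three components it lies in the spin-$1$ subspace $\Oq\otimes\C^3$ of $\Oq\otimes V^{1,1}$, on which $\Hs$ acts as $+\id$ for the orientation choice $\lambda=-1$, $c_0c_3>0$ by Proposition~\ref{lemma:4.3}; hence $\eta$ is self-dual, as the statement asserts. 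Now \eqref{eq:dedebp} reads $(\de\psi^\dag\wprod\deb\psi)^+=-q^{-3+\frac{1}{2}s}c_1t^2\,\xi$, so substituting $\xi=-q^{-\frac{1}{2}s}c_1^{-1}\eta$ gives $(\de\psi^\dag\wprod\deb\psi)^+=q^{-3}t^2\eta$; likewise \eqref{eq:dexdebx} reads $(\de x\wprod\deb x)^+=q^{\frac{1}{2}s}c_1q^{-2}x\,\xi=-q^{-2}x\,\eta$. This settles the first two identities.

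The third one requires a short computation. Equation \eqref{eq:consequence} gives $(\phi\wprod\phi^*)^+=q^{\frac{1}{2}s}c_1\,\xi'$, where $\xi'$ is the column with entries $q^{-\frac{1}{2}}(u^1_1)^*u^2_1$, $[2]^{-\frac{1}{2}}\bigl(q^{-1}(u^1_1)^*u^1_1-q(u^2_1)^*u^2_1\bigr)$, $-q^{\frac{1}{2}}(u^2_1)^*u^1_1$: the same monomials as in $\xi$, but with the two factors in each entry interchanged. Hence $(\phi\wprod\phi^*)^+=-q^{-1}\eta$ is equivalent to $\xi'=q^{-1}\xi$ componentwise, that is, to the three relations in $\Oq$
\[
\begin{gathered}
(u^1_1)^*u^2_1=q^{-1}u^2_1(u^1_1)^*\,,\qquad (u^2_1)^*u^1_1=q^{-1}u^1_1(u^2_1)^*\,,\\
q^{-1}(u^1_1)^*u^1_1-q(u^2_1)^*u^2_1=q^{-1}\bigl(u^1_1(u^1_1)^*-u^2_1(u^2_1)^*\bigr)\,.
\end{gathered}
\]
To verify them I would use Remark~\ref{rem:otherS}: the $*$-subalgebra of $\Oq$ generated by the first column is isomorphic to $\Sq$ via $u^1_1\mapsto z_3^*$, $u^2_1\mapsto z_2^*$, $u^3_1\mapsto z_1^*$, under which the three relations become $z_3z_2^*=q^{-1}z_2^*z_3$, $z_2z_3^*=q^{-1}z_3^*z_2$, and $q^{-1}z_3z_3^*-qz_2z_2^*=q^{-1}(z_3^*z_3-z_2^*z_2)$. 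The first two are instances of $z_i^*z_j=qz_jz_i^*$ ($i\neq j$), and the third follows by inserting $z_2^*z_2=z_2z_2^*+(1-q^2)z_1z_1^*$ and $z_3^*z_3=z_3z_3^*+(1-q^2)(z_1z_1^*+z_2z_2^*)$ from \eqref{eq:sphere} and collecting terms.

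I do not foresee any genuine obstacle: everything is contained in \eqref{eq:dedebp}, \eqref{eq:dexdebx} and \eqref{eq:consequence}, and the only delicate points are keeping track of the prefactor $q^{\frac{1}{2}s}c_1$ and the integer powers of $q$, and matching correctly the order of the two $u$-factors in each entry of $\xi'$ against $\xi$. One can avoid Remark~\ref{rem:otherS} entirely and instead re-derive the third identity from \eqref{eq:dezdebz}, \eqref{eq:phip33} and Proposition~\ref{pr:const}, but passing through the sphere relations \eqref{eq:sphere} is the quickest route.
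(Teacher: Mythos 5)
Your proposal is correct and follows essentially the same route as the paper: the paper's (very terse) proof likewise reads the first two identities directly off \eqref{eq:dedebp} and \eqref{eq:dexdebx}, and obtains the third from \eqref{eq:consequence} by reordering the factors $u^i_1$, $(u^j_1)^*$ via the observation of Remark~\ref{rem:otherS} that the first column of $(u^i_j)$ generates a copy of $\Sq$ — exactly the commutation relations you verify from \eqref{eq:sphere}. Your bookkeeping of the prefactors $q^{\frac{1}{2}s}c_1$ and the $q$-powers, as well as the relation $q^{-1}z_3z_3^*-qz_2z_2^*=q^{-1}(z_3^*z_3-z_2^*z_2)$, all check out.
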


\begin{proof}
This follows by comparing \eqref{eq:consequence}, \eqref{eq:dexdebx} and \eqref{eq:dedebp}.
The commutation relations between $u^i_1$ and $(u^j_1)^*$ are obtained from the observation that
$z^i:=(u^i_1)^*$ satisfy the same commutation rules of $\Sq$ (cf.~Remark~\ref{rem:otherS}).
\end{proof}

\begin{cor}\label{cor:6} 
The self-dual part of $F_{11}$ vanishes:
$$
F_{11}^+=0
$$
\end{cor}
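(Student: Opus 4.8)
The plan is to take the self-dual part of the expression for $F_{11}$ in Lemma~\ref{lemma:B12}, rewrite it as a scalar function of $x=p_{11}$ times the form $\eta$ of Lemma~\ref{lemma:eta}, and then check that this scalar vanishes by an elementary $q$-computation.

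The first point is that, with the orientation fixed here ($\lambda=-1$ and $c_0c_3>0$), Proposition~\ref{lemma:4.3} tells us the self-dual $(1,1)$-forms are exactly the sub-bimodule $\Omega^{1,1}_v$ of the decomposition \eqref{eq:dec2forms}; hence the projection $\omega\mapsto\omega^+$ onto the self-dual part is an $\Aq$-bimodule map, and in particular commutes with left multiplication by the functions $f(x)$, $f(q^2x)$, $\dot f(qx)$, all of which lie in the enlarged algebra of functions of $x$. Applying $(\cdot)^+$ to the formula of Lemma~\ref{lemma:B12} therefore gives
$$
F_{11}^+=q\,f(x)^2\,(\de\psi^\dag\wprod\deb\psi)^+ +t^2q^{-1}f(x)^2f(q^2x)^2\,(\phi\wprod\phi^*)^+ -q^2t^2\bigl(f(x)+f(q^2x)\bigr)\dot f(qx)\,(\de x\wprod\deb x)^+ .
$$

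Next I would substitute the three identities of Lemma~\ref{lemma:eta}, namely $(\de\psi^\dag\wprod\deb\psi)^+=q^{-3}t^2\eta$, $(\phi\wprod\phi^*)^+=-q^{-1}\eta$ and $(\de x\wprod\deb x)^+=-q^{-2}x\eta$. Since all the prefactors are functions of the single element $x$, they commute among themselves, and collecting terms yields
$$
F_{11}^+=t^2\Bigl[q^{-2}f(x)^2-q^{-2}f(x)^2f(q^2x)^2+\bigl(f(x)+f(q^2x)\bigr)\dot f(qx)\,x\Bigr]\eta .
$$
To conclude I would use the definition \eqref{q-der} of the $q$-derivative evaluated at $qx$, which gives $\dot f(qx)\,x=(q-q^{-1})^{-1}q^{-1}\bigl(f(q^2x)-f(x)\bigr)$, whence $\bigl(f(x)+f(q^2x)\bigr)\dot f(qx)\,x=(q^2-1)^{-1}\bigl(f(q^2x)^2-f(x)^2\bigr)$. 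Finally, writing $f(x)^2=(1-t^2x)^{-1}$ and $f(q^2x)^2=(1-t^2q^2x)^{-1}$ and reducing the bracket over the common denominator $(1-t^2x)(1-t^2q^2x)$, the three summands contribute numerators $q^{-2}(1-t^2q^2x)$, $-q^{-2}$ and $t^2x$, whose sum is $0$; hence $F_{11}^+=0$.

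There is no genuinely hard step: the corollary is essentially bookkeeping once Lemmas~\ref{lemma:B12} and~\ref{lemma:eta} are available. The two points that require a little care are the bimodule-linearity of $(\cdot)^+$, which is what allows the functions of $x$ to be pulled out of the projection, and the last piece of $q$-arithmetic, where one must keep track of the identity $(q-q^{-1})q=q^2-1$ when simplifying the $q$-derivative term.
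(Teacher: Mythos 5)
Your proposal is correct and follows essentially the same route as the paper: you apply the self-dual projection to the expression of Lemma~\ref{lemma:B12}, substitute the identities of Lemma~\ref{lemma:eta} to get $F_{11}^+=t^2a\,\eta$ with the same scalar $a$, and kill $a$ by the same $q$-derivative arithmetic (your common-denominator bookkeeping matches the paper's term-by-term reduction). The bimodule-linearity of $(\cdot)^+$ that you spell out is used implicitly in the paper, so there is no substantive difference.
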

\begin{proof}
>From Lemma~\ref{lemma:B12} and Lemma~\ref{lemma:eta}, we deduce that $F_{11}^+=t^2a\eta$, where $a\in\Aq$ is the following element:
$$
a=q^{-2}f(x)^2-q^{-2}f(x)^2f(q^2x)^2+x\big\{f(x)+f(q^2x)\big\}\dot{f}(qx) \;.
$$
But
$$
q^{-2}f(x)^2-q^{-2}f(x)^2f(q^2x)^2=\frac{-t^2x}{(1-t^2x)(1-t^2q^2x)}
$$
and
\begin{multline*}
x\big\{f(x)+f(q^2x)\big\}\dot{f}(qx)=\big\{f(x)+f(q^2x)\big\}\frac{f(q^2x)-f(x)}{q^2-1}=\frac{f(q^2x)^2-f(x)^2}{q^2-1}
\\
=\frac{(1-t^2x)-(1-t^2q^2x)}{(q^2-1)(1-t^2x)(1-t^2q^2x)}=\frac{t^2x}{(1-t^2x)(1-t^2q^2x)} \;. 
\end{multline*}
Hence $a=0$.
\end{proof}

\begin{lemma}\label{lemma:F22}
\begin{align*}
F_{22} &=q^{-1}f(q^4x)^2\deb\varphi^\dag\wprod\de\varphi + q^{-1}t^2f(q^2x)^2f(q^4x)^2 \phi^*\wprod 
\phi  \\[3pt]
       &-t^2q^6\big\{f(q^4x)+f(q^2x)\big\}\dot{f}(q^3x)\deb x\wprod\de x \;.
\end{align*}
\end{lemma}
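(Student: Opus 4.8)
The plan is to carry out, for the $(2,2)$ matrix element, the exact analogue of the computation that produced Lemma~\ref{lemma:B12} for $F_{11}$, with the roles of $\psi$ and $\varphi$ (equivalently of $\de$ and $\deb$, and of the two summands of $L_1\oplus L_{-1}$) interchanged. Setting $M:=(\Psi^\dag\dd\Psi)Q$ and recalling $Q=\mathrm{diag}(q^{\frac12},q^{-\frac12})$, Lemma~\ref{lemma:B1} gives
$$
F_{22}=M_{21}\wprod M_{12}+M_{22}\wprod M_{22}+q^{-1}(\dd\Psi^\dag)_{2\bullet}\wprod(\dd\Psi)_{\bullet 2}\;.
$$
First I would substitute the entries from Lemma~\ref{cor:B4}: $M_{22}=q^{-1}f(q^4x)^{-1}\deb f(q^4x)+q^{-1}f(q^4x)\de f(q^4x)^{-1}$, $M_{21}=q^{\frac12}f(q^4x)(\varphi^\dag\deb\psi)f(x)$, $M_{12}=q^{-\frac12}f(x)(\psi^\dag\de\varphi)f(q^4x)$, and use \eqref{eq:yyy} to replace $\psi^\dag\de\varphi=t\phi$ and $\varphi^\dag\deb\psi=tq^{-1}\phi^*$. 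For the last summand I would expand, via \eqref{eq:xxx} (recall $\dd\varphi=\de\varphi$, $\dd\varphi^\dag=\deb\varphi^\dag$), $(\dd\Psi)_{\bullet 2}=\de\varphi\,f(q^4x)+q^{-\frac12}\varphi\,\dd f(q^4x)$, which produces a ``leading'' piece $q^{-1}f(q^4x)\,\deb\varphi^\dag\wprod\de\varphi\,f(q^4x)$, cross terms governed by Lemma~\ref{lemma:B3} ($\varphi^\dag\de\varphi=-q^{-\frac12}t^2q^4\de x$, $(\deb\varphi^\dag)\varphi=-q^{-\frac12}t^2q^4\deb x$), and a term quadratic in $\dd f(q^4x)$.

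The $\phi^*\wprod\phi$ term is the easy part: $M_{21}\wprod M_{12}=t^2q^{-1}f(q^4x)\,\phi^* f(x)^2\phi\,f(q^4x)$, and since Lemma~\ref{lemma:B7} gives $\phi^* f(x)=f(q^2x)\phi^*$ while $x$ commutes with $\phi^*\wprod\phi$ (again by Lemma~\ref{lemma:B7}, $x\phi^*=q^{-2}\phi^*x$ and $x\phi=q^2\phi x$), this collapses immediately to $q^{-1}t^2f(q^2x)^2f(q^4x)^2\,\phi^*\wprod\phi$, the second summand in the statement. All the real work is in the $f$-terms: collecting $M_{22}\wprod M_{22}$, the leading piece of $q^{-1}(\dd\Psi^\dag)_{2\bullet}\wprod(\dd\Psi)_{\bullet 2}$, and the $\de x,\deb x$-contributions, one simplifies precisely as in the proof of Lemma~\ref{lemma:B12}: repeatedly apply the Leibniz rule together with $\de(f\,f^{-1})=\deb(f\,f^{-1})=0$ for $f=f(q^4x)$ to cancel mixed terms, use $\de f(q^4x)^{-2}=-t^2q^4\de x$, and convert the surviving combination $f(q^4x)\,\de f(q^4x)^{-2}\wprod\bigl(-f(q^4x)^2\deb f(q^4x)^{-1}+\deb f(q^4x)\bigr)$ into a $q$-derivative via Corollary~\ref{cor:qder}, using $(\de x)x=q^2x(\de x)$ and $(\deb x)x=q^{-2}x(\deb x)$ from \eqref{eq:lemma36} to collect arguments. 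This is designed to yield $-t^2q^6\{f(q^4x)+f(q^2x)\}\dot f(q^3x)\,\deb x\wprod\de x$, the power $q^6$ arising from the $q^4$ in $\de f(q^4x)^{-2}=-t^2q^4\de x$ together with the $q^2$ that appeared, in the analogous step of Lemma~\ref{lemma:B12}, from the $q$-derivative identity; the arguments $f(q^4x)+f(q^2x)$ and $\dot f(q^3x)$ come out of the same algebra as there, with $x$ replaced by $q^4x$ throughout the $\varphi$-sector.

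The main obstacle is purely the bookkeeping of $q$-powers, which is genuinely asymmetric with respect to the $F_{11}$ case. The normalization $\varphi^\dag\varphi=f(q^4x)^{-2}$, in place of $\psi^\dag\psi=f(x)^{-2}$, shifts the argument of every $f$ in the $\varphi$-sector; and Lemma~\ref{lemma:B7} gives $z_1\phi^*=q^{-1}\phi^*z_1$ against $z_1\phi=q\phi z_1$, so the commutations used to normal-order the $f$'s past $\phi$ and $\phi^*$ run in the opposite direction to those in the proof of Lemma~\ref{lemma:B12}. One must also keep careful track of the graded $*$-structure on forms (the $J$-twist, under which $(\de\varphi)^\dag=\deb\varphi^\dag$ with the paper's sign conventions) and of the fact that $\de,\deb$ are only twisted derivations on $\Oq$, so that \eqref{eq:xxx} rather than a naive Leibniz rule must be used when differentiating $\psi$, $\varphi$ and the $f$-factors. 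No new idea beyond those in the proof of Lemma~\ref{lemma:B12} is required; the content is to rerun that manipulation attentively in the $\varphi$-sector.
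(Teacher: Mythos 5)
Your frame is the paper's own: the decomposition $F_{22}=M_{21}\wprod M_{12}+M_{22}\wprod M_{22}+q^{-1}(\dd\Psi^\dag)_{2\bullet}\wprod(\dd\Psi)_{\bullet 2}$ from Lemma~\ref{lemma:B1}, the entries of Lemma~\ref{cor:B4} together with \eqref{eq:yyy}, and your treatment of $M_{21}\wprod M_{12}$ (which does collapse to $q^{-1}t^2f(q^2x)^2f(q^4x)^2\,\phi^*\wprod\phi$ exactly as you say) are all correct. The genuine gap is in the pivotal ``surviving combination''. You transported the $F_{11}$ expression verbatim with $x\mapsto q^4x$, writing $f(q^4x)\,\de f(q^4x)^{-2}\wprod\bigl(-f(q^4x)^2\deb f(q^4x)^{-1}+\deb f(q^4x)\bigr)$, which is of type $\de x\wprod\deb x$. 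But in the $\varphi$-sector the roles of $\de$ and $\deb$ are exchanged, not merely the argument of $f$: since $\dd\varphi=\de\varphi$ and $\dd\varphi^\dag=\deb\varphi^\dag$, the very cross terms you quote from Lemma~\ref{lemma:B3} enter as $f(q^4x)\,\deb f(q^4x)^{-2}\wprod\de f(q^4x)$ and $\deb f(q^4x)\wprod\bigl(\de f(q^4x)^{-2}\bigr)f(q^4x)$, i.e.\ with $\deb x$ on the \emph{left} of the wedge. After the Leibniz-rule cancellations the only two terms of type $\de x\wprod\deb x$ (one from $M_{22}\wprod M_{22}$, one from the piece quadratic in $\dd f(q^4x)$) annihilate each other, via $a\,\de a^{-1}\wprod a^{-1}\deb a+\de a\wprod a^{-2}\deb a=0$ with $a=f(q^4x)$, and what survives is $a\,\deb a^{-2}\wprod\bigl\{\de a+a(\de a)a^{-1}\bigr\}$, of type $\deb x\wprod\de x$ --- as it must be, since the third summand in the statement is proportional to $\deb x\wprod\de x$. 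Your combination, run literally, can only produce a multiple of $\de x\wprod\deb x$, and in this calculus that is a genuinely different two-form (compare \eqref{eq:dexdebx} with \eqref{eq:dexdebxB}); no sign or symmetry converts one into the other, so the sketch as written cannot reach the stated formula.

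With the corrected combination the rest is the bookkeeping you anticipate, but note that your accounting of the power $q^6$ is also off: it arises as $q^4\cdot q^4\cdot q^{-2}$, namely $q^4$ from $\deb f(q^4x)^{-2}=-t^2q^4\,\deb x$, another $q^4$ from $\de f(q^4x)=q^4\dot f(q^5x)\,\de x$ (Lemma~\ref{cor:qder} applied with argument $q^4x$), and $q^{-2}$ from the two factors $q^{-\frac12}$ of $Q$ hitting the $22$ entry --- not $q^4\cdot q^2$ with the $q^2$ ``from the $q$-derivative identity'', which in Lemma~\ref{lemma:B12} is just the overall normalization $q^{-2}F_{11}$. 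Once this is in place, the commutation rules from \eqref{eq:lemma36}, $(\deb x)\dot f(q^5x)=\dot f(q^3x)\,\deb x$ and $f(q^4x)(\de x)f(q^4x)^{-1}=(\de x)\,f(q^2x)f(q^4x)^{-1}$, turn $a\,\deb a^{-2}\wprod\bigl\{\de a+a(\de a)a^{-1}\bigr\}$ into $-t^2q^6\bigl\{f(q^4x)+f(q^2x)\bigr\}\dot f(q^3x)\,\deb x\wprod\de x$, which is precisely how the paper's proof concludes.
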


\begin{proof}
>From Lemma~\ref{lemma:B1}, Lemma~\ref{cor:B4} and equation \eqref{eq:yyy}:
\begin{align*}
q^2F_{22} &=q f(q^4x)^2\deb\varphi^\dag\wprod\de\varphi + qt^2f(q^4x)^2 
\phi^*\wprod f(x)^2 \phi \\
&+f(q^4x)^{-1}\deb f(q^4x)\wprod f(q^4x)\de f(q^4x)^{-1}+f(q^4x)\de f(q^4x)^{-1}\wprod f(q^4x)^{-1}\deb f(q^4x) \\
&+f(q^4x)\deb f(q^4x)^{-2}\wprod\de f(q^4x)+\deb f(q^4x)\wprod \big(\de f(q^4x)^{-2}\big)f(q^4x) \\
&+\de f(q^4x)\wprod f(q^4x)^{-2}\deb f(q^4x)+\deb f(q^4x)\wprod f(q^4x)^{-2}\de f(q^4x) \;. 
\end{align*}
Calling $a=f(q^4x)$, the last three lines are
\begin{gather*}
a^{-1}\deb a\wprod a\de a^{-1}+a\de a^{-1}\wprod a^{-1}\deb a \\
+a\deb a^{-2}\wprod\de a+\deb a\wprod \big(\de a^{-2}\big)a \\
+\de a\wprod a^{-2}\deb a+\deb a\wprod a^{-2}\de a 
\end{gather*}
and using the Leibniz rule they become
\begin{gather*}
a\de a^{-1}\wprod a^{-1}\deb a+\de a\wprod a^{-2}\deb a=0 \\
\deb a\wprod a^{-2}\de a+\deb a\wprod \big(\de a^{-2}\big)a=\deb a\wprod \de a^{-1} \;. 
\end{gather*}
Hence the last three lines reduce to
$$
\deb a\wprod \de a^{-1}+a^{-1}\deb a\wprod a\de a^{-1}+a\deb a^{-2}\wprod\de a
=a\deb a^{-2}\wprod\big\{\de a+a(\de a)a^{-1}\big\}
$$
Since $\deb a^{-2}=-t^2q^4\deb x$,
\begin{multline*}
a\deb a^{-2}\wprod\big\{\de a+a(\de a)a^{-1}\big\}=
-t^2q^8f(q^4x)\dot{f}(q^3x)\deb x\wprod\big\{\de x+f(q^4x)(\de x)f(q^4x)^{-1}\big\}
\\
=-t^2q^8f(q^4x)\dot{f}(q^3x)\deb x\wprod\de x\big\{1+f(q^2x)f(q^4x)^{-1}\big\}
\\
=-t^2q^8\big\{f(q^4x)+f(q^2x)\big\}\dot{f}(q^3x)\deb x\wprod\de x \;,
\end{multline*}
where we used $\de a=\dot{f}(q^5x)q^4\de x$, $(\deb x)\dot{f}(q^5x)=\dot{f}(q^3x)(\deb x)$
and similar commutation relations with $\de x$ (cf.~Corollary \ref{cor:qder} and equation
\eqref{eq:lemma36}). This concludes the proof.
\end{proof}

\begin{lemma}\label{lemma:B16}
Let $\eta'$ be the following self-dual $(1,1)$-form:
$$
\eta':=c_1
\begin{bmatrix}
q^{-\frac{1}{2}}(u^1_1)^*u^2_1 \\[3pt]
[2]^{-\frac{1}{2}}\big(q^{-1}(u^1_1)^*u^1_1-q(u^2_1)^*u^2_1\big) \\[3pt]
-q^{\frac{1}{2}}(u^2_1)^*u^1_1
\end{bmatrix}
$$
Then
$$
(\deb\varphi^\dag\wprod\de\varphi)^+ =t^2q\eta' \;,\quad\quad
(\deb x\wprod\de x)^+ =-q^{-4}x\eta' \;,\quad\quad
(\phi^*\wprod\phi)^+=q\eta' \;.
$$
\end{lemma}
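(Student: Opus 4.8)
The plan is to prove Lemma~\ref{lemma:B16} exactly as Lemma~\ref{lemma:eta}, namely by directly comparing the three explicit $\C^{3}$-valued formul{\ae} already established in \eqref{eq:consequenceB}, \eqref{eq:dexdebxB} and \eqref{eq:debdep} with the vector $\eta'$ of the statement. These three identities are the antiholomorphic counterparts of the ones in Lemma~\ref{lemma:eta}, with $\psi,\de,\phi$ replaced by $\varphi,\deb,\phi^{*}$ and the powers of $q$ shifted because $\varphi$ carries the generators $z_{i}$ (rather than $z_{i}^{*}$) and is normalised by $1-t^{2}q^{4}x$ instead of $1-t^{2}x$.

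Two of the three identities are immediate. Formula \eqref{eq:debdep} expresses $(\deb\varphi^{\dag}\wprod\de\varphi)^{+}$ as $q\,t^{2}$ times $c_{1}$ times exactly the column appearing in the definition of $\eta'$, hence $(\deb\varphi^{\dag}\wprod\de\varphi)^{+}=q\,t^{2}\eta'$; and \eqref{eq:dexdebxB} expresses $(\deb x\wprod\de x)^{+}$ as $-q^{-4}c_{1}x$ times the same column, which equals $-q^{-4}x\,\eta'$ since $c_{1}\in\R$ may be moved past $x$. No reordering of generators is needed in either case. For the third identity I would start from \eqref{eq:consequenceB}, which writes $(\phi^{*}\wprod\phi)^{+}$ as $-c_{1}$ times the column whose entries carry the conjugated generators on the opposite side from those in $\eta'$ (thus $u^{2}_{1}(u^{1}_{1})^{*}$ in place of $(u^{1}_{1})^{*}u^{2}_{1}$, and the diagonal entry $u^{1}_{1}(u^{1}_{1})^{*}-u^{2}_{1}(u^{2}_{1})^{*}$ in place of $q^{-1}(u^{1}_{1})^{*}u^{1}_{1}-q(u^{2}_{1})^{*}u^{2}_{1}$). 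To reorder it I would use, as in the proof of Lemma~\ref{lemma:eta}, the commutation relations among the $u^{i}_{1}$ and $(u^{j}_{1})^{*}$: by Remark~\ref{rem:otherS} the $*$-subalgebra they generate is isomorphic to $\Sq$, so moving one off-diagonal conjugated generator past the other produces a single power of $q$, while the diagonal combination becomes $q^{-1}(u^{1}_{1})^{*}u^{1}_{1}-q(u^{2}_{1})^{*}u^{2}_{1}$ after invoking the analogues of the sphere relations \eqref{eq:sphere} (the commutators $[z_{2}^{*},z_{2}]$ and $[z_{3}^{*},z_{3}]$ together with $\sum_{i}z_{i}z_{i}^{*}=1$), so that the constant and $(u^{3}_{1})^{*}u^{3}_{1}$ contributions cancel. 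Collecting the scalar prefactors then gives $(\phi^{*}\wprod\phi)^{+}=q\,\eta'$.

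The only genuinely delicate point, exactly as in Lemma~\ref{lemma:eta}, is this last reordering: tracking the precise power of $q$ (and the sign) acquired when each $(u^{j}_{1})^{*}$ is moved past a $u^{i}_{1}$, and checking that the sphere relations make the affine shifts in the diagonal entry cancel so that only $q^{-1}(u^{1}_{1})^{*}u^{1}_{1}-q(u^{2}_{1})^{*}u^{2}_{1}$ remains. Everything else is a direct read-off from the formul{\ae} already proved, so no new computation is needed.
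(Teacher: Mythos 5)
Your strategy coincides with the paper's: read the first two identities off \eqref{eq:debdep} and \eqref{eq:dexdebxB}, and get the third from \eqref{eq:consequenceB} by reordering with the $\Sq$-type commutation relations among $u^1_1,u^2_1$ and their adjoints. The problem sits exactly at the step you yourself flag as delicate. Carrying out that reordering one finds $u^2_1(u^1_1)^*=q(u^1_1)^*u^2_1$ and $u^1_1(u^2_1)^*=q(u^2_1)^*u^1_1$ (no sign is produced; the first relation can be checked directly from the $\Oq$ relations using $(u^1_1)^*=u^2_2u^3_3-qu^2_3u^3_2$), while the sphere relations give $u^1_1(u^1_1)^*-u^2_1(u^2_1)^*=(u^1_1)^*u^1_1-q^2(u^2_1)^*u^2_1=q\bigl(q^{-1}(u^1_1)^*u^1_1-q(u^2_1)^*u^2_1\bigr)$. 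Hence the column in \eqref{eq:consequenceB} equals $q$ times the column defining $\eta'$ (up to the factor $c_1$), and the overall prefactor $-c_1$ of \eqref{eq:consequenceB} is not absorbed by any of these moves, so that $(\phi^*\wprod\phi)^+=-q\eta'$ rather than $q\eta'$. There is no compensating sign to be found elsewhere: \eqref{eq:consequenceB} is correct as printed, since it follows from Lemma~\ref{lemma:1} and the $V^{0,1}\times V^{1,0}$ product of Proposition~\ref{pr:const}.

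So your assertion that ``collecting the scalar prefactors then gives $(\phi^*\wprod\phi)^+=q\eta'$'' is precisely the step that fails; you have reproduced what appears to be a sign typo in the printed statement instead of checking it. The internal evidence is in Corollary~\ref{cor:22+}: the element $b$ there contains $f(q^4x)^2\{1-f(q^2x)^2\}$, which is what the coefficient of $\phi^*\wprod\phi$ in Lemma~\ref{lemma:F22} produces only if $(\phi^*\wprod\phi)^+=-q\eta'$; with $+q\eta'$ one would instead obtain $F_{22}^+=2t^2f(q^2x)^2f(q^4x)^2\,\eta'\neq 0$ and the proof of Theorem~\ref{thm} would break down. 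A complete argument should therefore display the reordering explicitly and record the third identity as $(\phi^*\wprod\phi)^+=-q\eta'$; the other two identities are indeed immediate read-offs, as you say.
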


\begin{proof}
This follows by comparing \eqref{eq:consequenceB}, \eqref{eq:dexdebxB} and \eqref{eq:debdep}.
Again, the commutation relations between $u^i_1$ and $(u^j_1)^*$ are obtained from the observation that
$z^i:=(u^i_1)^*$ satisfy the same commutation rules of $\Sq$ (cf.~Remark 2.2).
\end{proof}

\begin{cor}\label{cor:22+}
The self-dual part of $F_{22}$ vanishes:
$$
F_{22}^+=0
$$
\end{cor}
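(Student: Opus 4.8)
The argument runs parallel to that of Corollary~\ref{cor:6}. Starting from the expression for $F_{22}$ in Lemma~\ref{lemma:F22}, the plan is to take the self-dual part of each of its three terms by means of Lemma~\ref{lemma:B16}, which writes $\deb\varphi^\dag\wprod\de\varphi$, $\phi^*\wprod\phi$ and $\deb x\wprod\de x$ (with $x=p_{11}$) all as an explicit scalar function of $x$ times one and the same self-dual $(1,1)$-form $\eta'$. One first checks that any function of $x$ commutes with $\eta'$: using Remark~\ref{rem:otherS} together with $x\phi=q^2\phi x$, $x\phi^*=q^{-2}\phi^* x$ (Lemma~\ref{lemma:B7}) and $(\de x)x=q^2x\,\de x$, $(\deb x)x=q^{-2}x\,\deb x$ from \eqref{eq:lemma36A}, one sees that $x$ --- hence every $f(q^kx)$ --- can be moved past the vector $\eta'$, exactly as is implicitly used for $\eta$ in the proof of Corollary~\ref{cor:6}. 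Consequently the self-dual part of $F_{22}$ takes the form
$$
F_{22}^+ = t^2\, b\, \eta' ,
$$
with $b$ a rational function of $x$ assembled from $f(q^2x)^2$, $f(q^4x)^2$ and $\dot{f}(q^3x)$.

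It then remains to show $b=0$, which follows by the same $q$-calculus identity as in Corollary~\ref{cor:6}. Writing $\dot{f}(q^3x)=\bigl(f(q^4x)-f(q^2x)\bigr)/\bigl((q^2-1)q^2x\bigr)$ via Lemma~\ref{cor:qder}, the derivative contribution to $b$ is
$$
q^2x\,\{f(q^4x)+f(q^2x)\}\,\dot{f}(q^3x)=\frac{f(q^4x)^2-f(q^2x)^2}{q^2-1} ,
$$
and, using $f(q^kx)^2=(1-t^2q^kx)^{-1}$, this is a rational function of $x$ that is cancelled by the remaining two terms of $b$ (the ones coming from the $\deb\varphi^\dag\wprod\de\varphi$ and $\phi^*\wprod\phi$ pieces of Lemma~\ref{lemma:F22}); therefore $b\equiv 0$ and $F_{22}^+=0$.

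This completes the proof of Theorem~\ref{thm}: with the chosen orientation the $(2,0)$- and $(0,2)$-components of $F$ are ASD, Lemma~\ref{lem:12} gives $F_{12}=0$ and hence $F_{21}=F_{12}^*=0$, and Corollaries~\ref{cor:6} and~\ref{cor:22+} give $F_{11}^+=F_{22}^+=0$, so that $F^+=0$. The one genuine difficulty is bookkeeping: keeping track of all the powers of $q$ and of the signs when substituting Lemma~\ref{lemma:B16} into Lemma~\ref{lemma:F22}, and verifying that the $x$-dependent scalars really do commute past $\eta'$ so that $F_{22}^+$ is honestly of the form $(\text{scalar})\cdot\eta'$; once this is set up, the vanishing of $b$ is the same one-line identity already used to kill $F_{11}^+$.
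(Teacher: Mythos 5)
You follow the same route as the paper: insert the self-dual parts from Lemma~\ref{lemma:B16} into the three terms of Lemma~\ref{lemma:F22}, collect $F_{22}^+=t^2\,b\,\eta'$ with $b$ a function of $x$, and kill $b$ by a $q$-calculus identity. (Incidentally, your preliminary step about commuting functions of $x$ past $\eta'$ is not needed: in both lemmas every scalar coefficient already multiplies the two-forms, respectively $\eta'$, from the left, so $b$ is assembled without ever moving a function of $x$ across $\eta'$; moreover Lemma~\ref{lemma:B7} and \eqref{eq:lemma36A} control commutation with $\phi,\phi^*,\de x,\deb x$, not with $\eta'$ --- the fact itself is true, but via the first-column copy of $\Sq$ of Remark~\ref{rem:otherS}.)

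The genuine gap is that the decisive identity $b=0$ is asserted rather than proved. You never write $b$ down: you correctly rewrite the derivative term as $q^2x\{f(q^4x)+f(q^2x)\}\dot{f}(q^3x)=\{f(q^4x)^2-f(q^2x)^2\}/(q^2-1)=t^2q^2x\,f(q^2x)^2f(q^4x)^2$, but then simply declare that ``the remaining two terms'' cancel it. Those terms are $f(q^4x)^2$ and $\pm f(q^2x)^2f(q^4x)^2$, and the cancellation happens only with the minus sign, i.e.\ with $b=f(q^4x)^2\{1-f(q^2x)^2\}+q^2x\{f(q^4x)+f(q^2x)\}\dot{f}(q^3x)$, since $1-f(q^2x)^2=-t^2q^2xf(q^2x)^2$. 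Obtaining that minus sign requires the $\phi^*\wprod\phi$ contribution to enter with sign opposite to that of $\deb\varphi^\dag\wprod\de\varphi$, namely $(\phi^*\wprod\phi)^+=-q\,\eta'$; this is what \eqref{eq:consequenceB} gives after reordering $u^i_1(u^j_1)^*$ into $(u^j_1)^*u^i_1$ with the relations of Remark~\ref{rem:otherS} (and it is the value the paper's computation of $b$ effectively uses), whereas taking the displayed formula $(\phi^*\wprod\phi)^+=q\eta'$ of Lemma~\ref{lemma:B16} at face value would give $b=f(q^4x)^2+f(q^2x)^2f(q^4x)^2(1+t^2q^2x)\neq 0$. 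So the ``bookkeeping of powers of $q$ and signs'' that you defer is precisely where the proof lives: to complete the argument you must exhibit $b$ explicitly and verify the two identities $\{f(q^4x)+f(q^2x)\}\dot{f}(q^3x)=t^2f(q^2x)^2f(q^4x)^2$ and $1-f(q^2x)^2=-t^2q^2xf(q^2x)^2$, which is exactly the content of the paper's proof.
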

\begin{proof}
>From Lemma~\ref{lemma:F22} and Lemma~\ref{lemma:B16}, $F_{22}^+=t^2b\eta'$, with $b\in\Aq$ the element:
$$
b=f(q^4x)^2\big\{1-f(q^2x)^2\big\}+q^2x\big\{f(q^4x)+f(q^2x)\big\}\dot{f}(q^3x) \;.
$$
But
$$
\dot{f}(q^3x)=\frac{f(q^4x)-f(q^2x)}{(q^4-q^2)x} \;; 
$$
hence
$$
\big\{f(q^4x)+f(q^2x)\big\}\dot{f}(q^3x)=t^2f(q^2x)^2f(q^4x)^2 \:.
$$
Also $1-f(q^2x)^2=-t^2q^2xf(q^2x)^2$. Thus
$$
b=-t^2q^2xf(q^2x)^2f(q^4x)^2+t^2q^2xf(q^2x)^2f(q^4x)^2=0 \;. \vspace{-15pt}
$$
\end{proof}


\medskip


\begin{thebibliography}{10}

\bibitem{Buc86}
N.P. Buchdahl, \textit{Instantons on $\textup{CP}_2$}, J. Diff. Geom. 24 (1986) 19--52. 

\bibitem{DDL08b}
F.~D'Andrea, L.~D{\k a}browski and G.~Landi, \textit{The Noncommutative Geometry of the Quantum Projective Plane},
  Rev. Math. Phys. \textbf{20} (2008)  979--1006.

\bibitem{DL09b}
F.~D'Andrea and G.~Landi, \textit{Anti-selfdual Connections on the Quantum Projective Plane: Monopoles}, Commun. Math. Phys. 297 (2010) 841--893.

\bibitem{DL08}
F.~D'Andrea and G.~Landi, \textit{Geometry of the quantum projective
  plane}, Noncommutative Structures in Mathematics and Physics, 5th ECM
  Satellite Conf. Proceedings, Royal Flemish Acad. (Brussels), 2008, pp.~85--102.

\bibitem{Don84}
S.K. Donaldson, \textit{Vector bundles on the flag manifolds and the Ward correspondence}, in Geometry Today, Progress in Math. 60, Birkh{\"a}user, 1985, pp. 109--119.

\bibitem{DK90}
S.K. Donaldson and P.B. Kronheimer, \textit{The geometry of four-manifolds},
Oxford Univ.~Press, 1990.

\bibitem{Gro90}
D. Groisser, \textit{The geometry of the moduli space of $\,\textup{CP}^2\!$ instantons},
Invent. Math. 99 (1990) 393--409.

\bibitem{Hab92}
L. Habermann, \textit{A family of metrics on the moduli space of $\,\textup{CP}^2\!$ instantons},
Commun. Math. Phys. 149 (1992) 209--216. 

\bibitem{KS97}
A.~Klimyk and K.~Schm{\"u}dgen, \textit{Quantum groups and their
  representations}, Springer, 1997.

\bibitem{VS91}
L.~Vaksman and Ya. Soibelman, \textit{The algebra of functions on the quantum
group $SU(n+1)$ and odd-dimensional quantum spheres}, Leningrad Math.~J. 2 (1991) 1023--1042.

\bibitem{Wel80}
R.O.~Wells, \textit{Differential analysis on complex manifolds}, GTM \textbf{65}, Springer, 1980.

\bibitem{Wor96}
S.L.~Woronowicz,
\textit{From multiplicative unitaries to quantum groups}, 
Int. J. Math. 7 (1996) 127--149.

\end{thebibliography}
\end{document}